\newcommand{\ec}{\color{black}} %% COLORE NERO
\newtheorem{thm}{Theorem}[section]
\newtheorem{lem}[thm]{Lemma}
\newtheorem{prop}[thm]{Proposition}
\newtheorem{cor}[thm]{Corollary}
\newtheorem{conjecture}[thm]{Conjecture}
\theoremstyle{definition}
\newtheorem{definition}[thm]{Definition}
\newtheorem{remark}[thm]{Remark}
\numberwithin{equation}{section}
\newcommand{\FF}{\mathbb{F}}
\newcommand{\A}{\mathbb{A}}
\newcommand{\BB}{\mathbb{B}}
\newcommand{\DD}{\mathbb{D}}
\newcommand{\m}{\mathfrak{m}}
\def\bw{\bigwedge}
\def\ker{\mbox{\rm ker}}
\def\rank{\mbox{\rm rank}}
\newcommand{\ra}{\rightarrow}
\begin{document}

\title{Mapping free resolutions of length three II - Module formats}

\author{Sara Angela Filippini \thanks{ Universit\`a del Salento, Dipartimento di Matematica e Fisica ``Ennio De Giorgi'', Lecce, \textbf{Email address:} saraangela.filippini@unisalento.it  
} 
\and Lorenzo Guerrieri \thanks{ Jagiellonian University, Instytut Matematyki, Krak\'{o}w, \textbf{Email address:} lorenzo.guerrieri@uj.edu.pl}
} 
\maketitle
\begin{abstract}
\noindent
Let $M$ be a perfect module of projective dimension 3 over a Gorenstein, local or graded ring $R$. We denote by $\FF$ the minimal free resolution of $M$. Using the generic ring associated to the format of $\FF$ we define higher structure maps, according to the theory developed by Weyman in \cite{W18}. We introduce a generalization of classical linkage for $R$-module using the Buchsbaum--Rim complex, and study the behaviour of structure maps under this Buchsbaum--Rim linkage. In particular, for certain formats we obtain criteria for these $R$-modules to lie in the Buchsbaum--Rim linkage class of a Buchsbaum--Rim complex of length 3. \\

\noindent MSC: 13D02, 13C05, 13C40 \\
\noindent Keywords: free resolutions of length 3, linkage of modules, Buchsbaum--Rim complex.
\end{abstract}

\section{Introduction}
Free resolutions of ideals and modules have been investigated for a long time. Given a module $M$ over a commutative local or graded ring $R$, admitting a minimal finite free resolution $\FF$, we define its {\it format} as the sequence of the ranks of the free modules of $\FF$ (Betti numbers). An important task is to classify ideals and modules having minimal free resolution of a given format. The classical examples which gave rise to the subject are Hilbert--Burch and Buchsbaum--Eisenbud structure theorems, classifying perfect ideals of height 2 and Gorenstein ideals of height 3, respectively. A related widely studied problem is the classification of perfect ideals which are in the linkage class of a complete intersection ({\it licci} ideals). Perfect ideals of height 2 and Gorenstein ideals of height 3 are well-known examples of licci ideals (\cite{CVWlinkage,HU,PS74, Watanabe}). \\
This paper is motivated by previous work of Weyman and other authors to understand in general the structure of resolutions of modules of projective dimension $3$ \cite{CKLW,CVWdynkin,PW90,w89}. A helpful tool for this task is provided by the generic ring and generic complex associated to each given format. In \cite{W18} a generic ring $\hat{R}_{gen}$ is constructed for resolutions of any format of length 3 by using methods from representation theory. In particular, there is an action of a Kac--Moody Lie algebra on ${\hat R}_{gen}$, whose properties are fundamental to describe the modules having free resolution of that format. Moreover, the ring $\hat{R}_{gen}$ is Noetherian if and only if this Lie algebra is finite-dimensional. This condition singles out a nice collection of formats, called {\it Dynkin} formats, since in the finite-dimensional case the Lie algebra corresponds to a classical Dynkin diagram.
In the case of cyclic modules of projective dimension 3 (i.e. $R/I$ for an ideal $I$) we have the following Dynkin formats:
\begin{itemize}
\item $A_n$: $(1,3,n+2,n)$ for $n\geq 1$;
\item $D_n$: $(1,n, n,1)$ and $(1,4,n,n-3)$ for $n\geq 4$;
\item $E_6$: $(1,5,6,2)$;
\item $E_7$: $(1,5,7,3)$ and $(1,6,7,2)$;
\item $E_8$: $(1,5,8,4)$ and $(1,7,8,2)$.
\end{itemize} 
It is conjectured that perfect ideals having minimal free resolution of Dynkin format can be obtained as specialization of the defining ideals of Schubert varieties \cite{SW21}, and that they are all licci \cite{CVWdynkin}. Furthermore, in \cite{CVWdynkin} it is proved that there exist non-licci ideals with minimal resolution of any non-Dynkin format. Finite free resolutions of ideals of Schubert varieties in exceptional minuscule homogeneous spaces were investigated in \cite{FTW23}.

By studying the action of the associated Lie algebra, Weyman observed that the differentials and the multiplicative structure  of a resolution (which is well-known since the famous Buchsbaum--Eisenbud papers \cite{BE74}, \cite{BE77}) are only the first steps of a more complicated collection of linear maps. These linear maps are called {\it higher structure maps} and are induced by three special representations of the Lie algebra, called {\it critical representations}. In \cite{GW20} the higher structure maps are described for formats $(1,n,n,1)$ and $(1,4,n,n-3)$. In \cite{GNW} the authors study how certain higher structure maps behave under linkage, and give a criterion for ideals admitting a resolution of format $(1,5,6,2)$ to be licci.

This paper aims to study non-cyclic modules having minimal free resolutions of length 3 for small Dynkin formats, following the methods used in \cite{GW20} and \cite{GNW}. We focus on the formats $(2,4,4,2)$, $(2,5,4,1)$, $(2,5,5,2)$ and $(2,6,5,1)$. We look at their critical representations and higher structure maps, and we consider their ``linkage'' properties. A module $M$ is called {\it perfect} if its projective dimension equals the depth of its annihilator. Perfect modules with free resolution of format $(2,4,4,2)$ are resolved by a Buchsbaum--Rim complex (\cite{Buchsbaum,BuchsbaumRim}, see also \cite{CAwav}). For a free resolution $\FF$ of this type, we introduce a notion of linkage obtained as the dual of the mapping cone of a complex map from a Buchsbaum--Rim complex (defined in terms of $\FF$) to $\FF$, which we call {\it Buchsbaum--Rim (BR) linkage}.
Other versions of linkage for non-cyclic modules were considered in \cite{Dibaei,Martin,Martsinkovsky,Nagel,Yoshino}.

The paper is organized as follows. In Section \ref{sec:critref} we introduce the relevant notations and provide the formulas of the higher structure maps for the aforementioned formats. We then characterize perfect modules with resolutions of formats $(2,4,4,2)$ and $(2,5,4,1)$. 

In Section \ref{sec:BRlinkage} we introduce the notion of BR linkage and study perfect modules having minimal free resolutions of formats $(2,6,5,1)$ and $(2,5,5,2)$. We investigate how higher structure maps change under this BR linkage. Finally, we provide a criterion for such modules to be in the BR-linkage class of a not necessarily minimal BR complex of format $(2,4,4,2)$.

Lastly, Section \ref{sec:splitex} is devoted to computing some higher structure maps over the split exact complexes of formats $(2,6,5,1)$ and $(2,5,5,2)$. It contains technical results that are needed to complete the proofs in Section \ref{sec:BRlinkage}.

\section{Critical representations and higher structure maps for module formats} \label{sec:critref}

Let $R$ be a commutative Noetherian ring. We generally assume $R$ to be Gorenstein and local or graded, with maximal ideal $\m$ and infinite residue field $K$. We also assume $\frac{1}{2} \in R$. For a matrix $A$ with entries in $R$ we always denote by $I_d(A)$ the ideal generated by its $d\times d$ minors.

We will work with free resolutions of modules over $R$ of the form
\begin{equation}
\label{complexF}
\FF: 0 \longrightarrow F_3 \buildrel{d_3}\over\longrightarrow  F_2 \buildrel{d_2}\over\longrightarrow F_1 \buildrel{d_1}\over\longrightarrow F_0.
\end{equation}
We denote by $r_i$ the rank of $F_i$ and say that the complex $\FF$ has format $(r_0,r_1,r_2,r_3)$. 
 Throughout the paper we assume $F_0 \cong R^2$ and describe the formulas of some structure maps for this kind of free resolutions. 
 We expect similar formulas to hold more in general when $r_0 > 2$. 
 The bases of $F_0, F_1,F_2,F_3$ will be respectively denoted by $\lbrace u_1, u_2 \rbrace$, $\lbrace e_1, \ldots, e_{r_1}\rbrace$, $\lbrace f_1, \ldots, f_{r_2} \rbrace$, $\lbrace g_1, \ldots, g_{r_3}\rbrace$.

In \cite{W18} Weyman constructed a generic ring ${\hat R}_{gen}$ associated to any given format of free resolution of length 3. There is a Kac--Moody Lie algebra acting on ${\hat R}_{gen}$, whose properties are fundamental to describe the modules having free resolution of that format. 

There are three representations $W(d_3), W(d_2), W(d_1)$ of this Lie algebra, called \it critical representations, \rm which are needed to describe the generators of the generic ring and their relations. 
The graded components of these representations correspond to maps involving symmetric powers, exterior powers, and more complicated Schur functors of the modules $F_0, F_1, F_2, F_3$. The zero-graded components of the critical representations correspond to the three differentials $d_3, d_2, d_1$. In the case of free resolutions of cyclic modules, the graded components of degree one correspond to the multiplicative structure of $\FF$. For non-cyclic modules the maps in the degree one components can be computed using the comparison map from a Buchsbaum--Rim complex to the complex $\FF$, analogously as one computes the multiplicative structure for an ideal using a complex map from the Koszul complex. The formulas are discussed in detail in Section 2.1.
All the maps corresponding to components of degree larger than zero are called \it higher structure maps\rm. 

For Dynkin formats the critical representations are finite-dimensional and the higher structure maps have nice applications, describing properties of the generic rings and of the structure of the modules of the given format. 

In this paper we will work mainly with formats $(2,5,5,2)$ and $(2,6,5,1)$.
 For the format $(2,5,5,2)$ the critical representations are:
 $$W(d_3)=F^*_2 \otimes [F_3 \oplus \bigwedge^3 F_1 \oplus \bigwedge^5 F_1 \otimes F_1 \otimes F_3^* \oplus S_{2,2,2,2,1} F_1 \otimes \bigwedge^2 F_3^* ],$$
$$W(d_2)=F_2 \otimes [F_1^* \oplus \bigwedge^2 F_1 \otimes F_3^* \oplus (\bigwedge^4 F_1 \otimes F_1 \otimes \bigwedge^2 F_3^* \oplus \bigwedge^5 F_1 \otimes S_2 F_3^*  ) \oplus  $$
$$ \oplus S_{2,2,2,1,1}F_1 \otimes S_{2,1}F_3^* \oplus  S_{3,2,2,2,2}F_1 \otimes S_{2,2}F_3^* ],  $$
$$W(d_1)= F_0^* \otimes [F_1 \oplus \bigwedge^4 F_1 \otimes F_3^* \oplus \bigwedge^5 F_1 \otimes \bigwedge^2 F_1 \otimes  \bigwedge^2F_3^* \oplus
 \bigwedge^5F_1 \otimes \bigwedge^5F_1 \otimes  S_{2,1}F_3^*].  $$
For the format $(2,6,5,1)$ the critical representations are:
 $$W(d_3)=F^*_2 \otimes [F_3 \oplus \bigwedge^3 F_1 \oplus (\bigwedge^5 F_1 \otimes F_1 \oplus \bigwedge^6 F_1) \otimes F_3^* \oplus $$  $$ \oplus S_{2,2,2,1,1,1} F_1 \otimes S_2F_3^* \oplus S_{2,2,2,2,2,2}F_1 \otimes S_3 F_3^* ],$$
$$W(d_2)=F_2 \otimes [F_1^* \oplus \bigwedge^2 F_1 \otimes F_3^* \oplus \bigwedge^5 F_1 \otimes S_2 F_3^*], $$
$$W(d_1)= F_0^* \otimes [F_1 \oplus \bigwedge^4 F_1 \otimes F_3^* \oplus \bigwedge^6 F_1 \otimes F_1 \otimes  S_2F_3^*].  $$

 For the other Dynkin formats the tables describing the critical representations can be found in \cite{LW19}. 
Formulas for computing explicitly these maps are given in \cite{GW20} for cyclic module formats $(1,n,n,1)$ and $(1,4,n, n-3)$ and in \cite{GNW} for components of small degree (up to four) of arbitrary cyclic module formats (i.e.\ $r_0=1$). The way to compute these structure maps relies on lifting a cycle in some acyclic complex.
In some cases the higher structure maps are computed by choosing generic liftings introducing sets of new variables over $R$, called {\it defect variables}. Indeed, the lift of a cycle may not be unique and the defect variables are used to parametrize generically this non-uniqueness. In this paper we do this operation in the last section, where we work with a split exact complex. In the notation of \cite{GW20} the maps, computed generically using the defect variables, are denoted by $v^{(i)}_j$, where $i=1,2,3$ denotes the critical representation and $j$ denotes the graded component.
Throughout this paper, following the notation of \cite{GNW}, we call $w^{(i)}_{j,k}$ some chosen image of the corresponding map, computed over the ring $R$ without adding new defect variables.
The index $k$ here denotes the fact that for formats different from $D_n$ some graded components involve more than one map. When this is not the case and there is only one map, we simply use the notation $w^{(i)}_{j}$.

\subsection{Definition of certain higher structure maps}
 
We describe how to compute all the structure maps needed in this paper. These maps are computed by lifting a cycle in an exact complex, which is usually associated to $\FF$ or to some Schur complex in the modules $F_0, F_1, F_2, F_3$ (for a treatment of Schur functors and Schur complexes see \cite{ABW}).

We start from the maps in the first graded components $w^{(3)}_1$, $w^{(2)}_1$, $w^{(1)}_1$. 
For these, we first observe that the matrix of $d_1$ is of size $2 \times r_1$. We denote by $m_{ij}$ its $2 \times 2$ minor relative to the columns $i,j$ (with the convention of adding a negative sign if $i > j$).

The map $w^{(3)}_1: \bigwedge^3 F_1 \to F_2$ is defined by lifting the cycle Im($q^{(3)}_1$) in the following complex
\begin{center}
\begin{tikzcd}
0 \arrow[r] & F_3 \arrow[r] & F_2 \arrow[r] & F_1 \arrow[r] & F_0 \\ 
&& \bw^3 F_1 \arrow[u,"w^{(3)}_1"] \arrow[ru,"q^{(3)}_1" ']  \\
\end{tikzcd}
\end{center}
where
\begin{equation}
\label{lift3,1}
q^{(3)}_1(e_i \wedge e_j \wedge e_k):= m_{ij}e_k - m_{ik}e_j + m_{jk}e_i.
\end{equation}
This lift can be interpreted as the comparison map from the Buchsbaum--Rim complex on the map $d_1: F_1 \to F_0$ to the complex $\FF$. This generalizes the procedure commonly used for ideals (cyclic module formats) to compute the multiplication map $\bigwedge^2F_1 \to F_2$, comparing the minimal free resolution with the Koszul complex on a set of minimal generators. 

The image of $q^{(3)}_1$ is in the kernel of $d_1$, and therefore
in the image of $d_2$. Hence it can be lifted to $F_2$.
The lift is not unique, since it can be modified by adding any element in the image of $d_3$ (equal to the kernel of $d_2$). In Section 4, to parametrize generically all the possible liftings we will add a new set of variables as done in \cite{GW20}, \cite{GNW}. 

Also all subsequent maps $w^{(i)}_j$ are defined by lifting the image of an opportune map $q^{(i)}_j$ along an exact complex (as in \cite{GW20}, \cite{GNW} in the case of resolutions of cyclic modules). To check that $q^{(i)}_j$ defines a cycle it is sufficient to show it over a split exact complex, using generic liftings, and  then apply \cite[Theorem 2.1]{GNW} (see Remark \ref{thmweyman} in this paper). In Section \ref{sec:splitex}, we show how to perform the computations over a split exact complex with generic liftings. 

To compute $w^{(2)}_1: \bw^2 F_1 \otimes F_2 \ra F_3$ we consider the map $q^{(2)}_1: \bw^2 F_1 \otimes F_2 \ra F_2$ given by 
\begin{equation}
\label{lift2,1}
q^{(2)}_1(e_i\wedge e_j \otimes f_h) = m_{ij} f_h - w^{(3)}_1 \left(d(f_h)\wedge e_i \wedge e_j\right).
\end{equation}
The image of $q^{(2)}_1$ lies in $\ker(d_2)=\mbox{Im}(d_3)$. Hence, we define $w^{(2)}_1$ as the lift of $q^{(2)}_1$ along the differential $d_3: F_3 \to F_2$. Since $d_3$ is injective, this lift is unique (after fixing a choice of $w^{(3)}_1$).
For the other $w^{(i)}_j$'s we write down the formulas for $q^{(i)}_j$ only for a fixed chosen set of indices in $F_1, F_2$. For all the other possible combinations of indices the terms are defined analogously, respecting the usual skew-symmetric rules of exterior powers and Schur functors. For simplicity we set
$\varepsilon_{1, \ldots,i }:= e_{1} \wedge e_{2} \wedge \ldots \wedge e_{i}$, and let $\varepsilon_{1,\ldots,\hat{j}, \ldots t}$
denote the wedge product of all the elements $e_{1}, \ldots, e_{t}$ distinct from $e_{j}$.

For the map $w^{(1)}_1: \bw^4 F_1 \ra F_0\otimes F_3$, we define $q^{(1)}_1:\bw^4 F_1 \ra F_0\otimes F_2$ as
\begin{equation}
\label{lift1,1}
q^{(1)}_1(\varepsilon_{1,2,3,4}) = \sum_{j=1}^4 (-1)^{j+1} d_1(e_{j}) \otimes w^{(3)}_1 (\varepsilon_{1,\ldots,\hat{j}, \ldots 4}).
\end{equation}
The image of $q^{(1)}_1 $ lies in the kernel of $1_{F_0}\otimes d_2$, and therefore can be lifted via $1_{F_0}\otimes d_3$ to $F_0 \otimes F_3$.

We give now formulas for maps in the second graded components. Since we aim to work out the linkage properties of small formats, we reduce our description to specific cases.
  
In $W(d_3)$ we find two maps $w^{(3)}_{2,1}: \bigwedge^5 F_1 \otimes F_1 \to F_2 \otimes F_3$ and $w^{(3)}_{2,2}: \bigwedge^6 F_1 \to F_2 \otimes F_3$. Both these maps are defined by lifting a cycle $\mbox{Im}(q^{(3)}_{2,k}) \in S_2F_2$ in 
the following complex:
$$ 0 \longrightarrow \bigwedge^2F_3  \longrightarrow F_3\otimes F_2  \longrightarrow  S_2 F_2  \longrightarrow S_2F_1. $$
This fact can be proved explicitly applying the map $S_2F_2 \to S_2F_1$ to the image of $q^{(3)}_{2,k}$ and checking that it is zero by using Pl\"{u}cker relations.
We first look at $w^{(3)}_{2,1}$. 
Adopting the notation $e_i^.e_j^.e_k$ for $w^{(3)}_{1}(e_i \wedge e_j \wedge e_k)$, we set
\begin{equation}
\label{lift3,21}
q^{(3)}_{2,1}(\varepsilon_{1,\ldots,5} \otimes e_1):= e_1^.e_2^.e_3 \otimes e_1^.e_4^.e_5 - e_1^.e_2^.e_4 \otimes e_1^.e_3^.e_5 + e_1^.e_2^.e_5 \otimes e_1^.e_3^.e_4,
\end{equation}
seen as an element of the symmetric power $S_2F_2$.
If $r_1\geq 6$, we set 
\begin{equation}
\label{lift3,22}
q^{(3)}_{2,2}(\varepsilon_{1,\ldots,6}):= \sum_{1 \leq i < j}^{5} (-1)^{i+j+1} e_i^.e_j^.e_6 \otimes \varepsilon_{1,\ldots,\hat{i},\hat{j}, \ldots 5}.
\end{equation}
Notice that both $ q^{(3)}_{2,1} $ and $ q^{(3)}_{2,2} $ composed with the map $d_2 \otimes d_2: S_2F_2 \to S_2F_1$ are zero. Therefore their images lift to $F_3 \otimes F_2$. If $r_3 \geq 2$, this lift is not unique and one can add a second set of defect variables to parametrize it generically. \ec

Let us now define two maps $w^{(1)}_{2,1}: \bigwedge^5 F_1 \otimes \bw^2 F_1 \to F_0 \otimes \bw^2 F_3$ and $w^{(1)}_{2,2}: \bigwedge^6 F_1 \otimes F_1 \to F_0 \otimes S_2F_3$ arising from the representation $W(d_1)$.

If $r_1=5$, we obtain $w^{(1)}_{2,1}$ as lift of $q^{(1)}_{2,1}: \bigwedge^5 F_1 \otimes \bw^2 F_1 \to F_2 \otimes F_3 \otimes F_0$ along the map $\bw^2 F_3 \otimes F_0 \to F_3 \otimes F_2 \otimes F_0$ induced by $d_3 \otimes id_{F_0}$ (the element $a \wedge b \otimes u$ is mapped to $[d_3(a) \otimes b  - a \otimes d_3(b)]\otimes u$). This is defined as
\begin{equation}
\label{lift1,21}
  q^{(1)}_{2,1}(\varepsilon_{1,\ldots,5} \otimes e_1 \wedge e_2):= \sum_{i=3}^{5} (-1)^{i+1} e_1^.e_2^.e_i \otimes w^{(1)}_1( \varepsilon_{1,\ldots,\hat{i}, \ldots, 5}) + \sum_{j=1}^2 (-1)^j d_1(e_j) \otimes w_{2,1}^{(3)}(\varepsilon_{1,\ldots,5} \otimes e_j).
 \end{equation}

For $r_1=6$, the map $w^{(1)}_{2,2}$ is the lift of $q^{(1)}_{2,2}: \bigwedge^6 F_1 \otimes F_1 \to F_2 \otimes F_3 \otimes F_0$. This is defined as
\begin{equation}
\label{lift1,22}
q^{(1)}_{2,2}(\varepsilon_{1,\ldots,6} \otimes e_1):= \sum_{2 \leq i < j}^{6} (-1)^{i+j+1} e_1^.e_i^.e_j \otimes w^{(1)}_1( \varepsilon_{1,\ldots,\hat{i}, \hat{j}, \ldots, 6}) + d_1(e_1) \otimes w_{2,2}^{(3)}(\varepsilon_{1,\ldots,6}).
\end{equation}

Let us look at two maps from the second graded component of $W(d_2)$. These are $w^{(2)}_{2,1}: \bigwedge^4 F_1 \otimes F_1 \otimes F_2 \to \bw^2 F_3$ and $w^{(2)}_{2,2}: \bigwedge^5 F_1 \otimes F_2 \to S_2F_3$. Again we define maps $ q^{(2)}_{2,1}, q^{(2)}_{2,2} $ having target $F_3 \otimes F_2$ and we lift respectively to $ \bw^2 F_3 $ or to $ S_2 F_3 $, as for the two maps defined above in $W(d_1)$. For $q^{(2)}_{2,1}$ we give two different formulas depending on the configuration of the basis elements of $ \bigwedge^4 F_1 \otimes F_1 $ (such formulas are equivalent up to multiplying by a constant):
\begin{equation}
\label{lift2,21+}
 q^{(2)}_{2,1}(\varepsilon_{1,2,3,4} \otimes e_1 \otimes f_h):= e_1^.e_2^.e_3 \otimes w^{(2)}_1(e_1 \wedge e_4 \otimes f_h)-e_1^.e_2^.e_4 \otimes w^{(2)}_1(e_1 \wedge e_3 \otimes f_h)+ $$
 $$ + e_1^.e_3^.e_4 \otimes w^{(2)}_1(e_1 \wedge e_2 \otimes f_h)+ f_h \otimes d_1(e_1) \wedge w^{(1)}_1(\varepsilon_{1,2,3,4})-w^{(3)}_{2,1}(\varepsilon_{1,2,3,4} \wedge d_2(f_h) \otimes e_1),
 \end{equation}
 
\begin{equation}
\label{lift2,21}
q^{(2)}_{2,1}(\varepsilon_{1,2,3,4} \otimes e_5 \otimes f_h):= \sum_{1 \leq i < j \leq 4} (-1)^{i+j+1} w^{(2)}_1(\varepsilon_{1, \ldots, \hat{i}, \hat{j}, \ldots, 4} \otimes f_h) \otimes e_i^.e_j^.e_5 +
$$
$$ + \sum_{1 \leq i \leq 4} (-1)^{i} w^{(2)}_1(e_i \wedge e_5 \otimes f_h) \otimes w^{(3)}_1(\varepsilon_{1, \ldots, \hat{i}, \ldots, 4 }) + \frac{1}{2}f_h \otimes \sum_{1 \leq i \leq 5} (-1)^{i} w^{(1)}_1(\varepsilon_{1, \ldots, \hat{i}, \ldots, 5 }) \wedge d_1(e_i) +   $$
 $$ -w^{(3)}_{2,1}(\varepsilon_{1,2,3,4} \wedge d_2(f_h) \otimes e_5),
\end{equation}

\begin{equation}
\label{lift2,22}
 q^{(2)}_{2,2}(\varepsilon_{1,\ldots,5} \otimes f_h):= \sum_{i,j,k} (-1)^{i+j+k} e_i^.e_j^.e_k  \otimes w^{(2)}_1(\varepsilon_{1,\ldots,\hat{i}, \hat{j}, \hat{k}, \ldots, 5} \otimes f_h) + $$ $$ +  \frac{1}{2}f_h \otimes \sum_{i} (-1)^i d_1(e_i) \wedge w^{(1)}_1(\varepsilon_{1,\ldots,\hat{i}, \ldots, 5}) - w_{2,2}^{(3)}(\varepsilon_{1,\ldots,5} \wedge d_2(f_h)).
\end{equation}

Finally, we describe one map from the third graded components of $W(d_1)$ if $r_1=5$. 
This map is $w^{(1)}_{3}: \bigwedge^5 F_1 \otimes \bw^5 F_1 \to F_0 \otimes \bw^2 F_3 \otimes F_3$ and can be obtained as lift of the image of $q^{(1)}_{3}: \bigwedge^5 F_1 \otimes \bw^5 F_1 \to F_0 \otimes F_3 \otimes F_3 \otimes F_2$,
along the map obtained as tensor product of $id_{F_0}$ with the map $\bigwedge^2 F_3 \otimes F_3 \to F_3 \otimes F_3 \otimes F_2$ sending $a \wedge b \otimes c$ to $[d_3(a) \otimes b  - a \otimes d_3(b)] \otimes c$. Let us define
\begin{equation}
\label{lift1,3}
q^{(1)}_{3}(\varepsilon_{1,\ldots,5} \otimes \varepsilon_{1,\ldots,5}):= \sum_{i=1}^5 (-1)^{i+1} w^{(1)}_1(\varepsilon_{1,\ldots,\hat{i}, \ldots, 5}) \otimes w^{(3)}_{2,1}(\varepsilon_{1,\ldots, 5} \otimes e_i).
\end{equation}

\subsection{Perfect modules with resolution of format $(2,4,4,2)$}

We now discuss the structure of perfect modules with resolution of format $(2,4,4,2)$. The critical representations for this format are:
$$ W(d_3)= F_2^* \otimes [ F_3 \oplus \bw^3 F_1 ], $$
$$ W(d_2)  =  F_2 \otimes[ F_1^* \oplus  \bw^2 F_1 \otimes F_3^* \oplus \bigwedge^4 F_1 \otimes F_1 \otimes \bw^2 F_3^*], $$
$$ W(d_1)  =    F_0^* \otimes [ F_1 \oplus \bw^4 F_1 \otimes F_3^* ]. $$
Perfect modules with resolution of format $(2,4,4,2)$ are resolved by a Buchsbaum--Rim complex. Let $M$ be one of such modules defined over the ring $R$. The free resolution of $M$ is given by 
\begin{equation}
\label{buchsbaumrymcomplex}
\FF : 0 \longrightarrow F_3 \buildrel{d_3}\over\longrightarrow  F_2 \buildrel{d_2}\over\longrightarrow F_1 \buildrel{d_1}\over\longrightarrow F_0 \longrightarrow M,
\end{equation}
where the differentials are
$$d_3 =\bmatrix x_{11}  & x_{21} \\ x_{12} & x_{22} \\ x_{13} & x_{23} \\ x_{14} & x_{24} \endbmatrix , \quad 
d_2=\bmatrix 0 & X_{\hat{1}\hat{2}} & -X_{\hat{1}\hat{3}} & X_{\hat{1}\hat{4}} \\  -X_{\hat{1}\hat{2}} & 0 &  X_{\hat{2}\hat{3}}& -X_{\hat{2}\hat{4}}\\ X_{\hat{1}\hat{3}} & -X_{\hat{2}\hat{3}} &  0 & X_{\hat{3}\hat{4}} \\ -X_{\hat{1}\hat{4}} & X_{\hat{2}\hat{4}} & -X_{\hat{3}\hat{4}} & 0 \endbmatrix , \quad 
d_1=\bmatrix x_{11} & x_{12} & x_{13} & x_{14} \\ x_{21} & x_{22} & x_{23} & x_{24} \endbmatrix, $$
and $I_2(d_1)$ has grade 3.
Here, for $i < j$, $X_{\hat{i}\hat{j}}$ is the minor of $d_1$ obtained by removing the columns $i$ and $j$. Conversely, we denote by $X_{ij}$ the minor $x_{1i}x_{2j} - x_{1j}x_{2i}$. 

In the next sections, this complex will be used to study the Buchsbaum--Rim linkage of perfect modules with resolution of small format.
To briefly describe the structure maps $w^{(i)}_j$ of the complex $\FF$ we use the formulas from the previous subsection.

Setting $\{i,j,k,r \} = \{1,2,3,4 \}$, if $i < j < k$ we get $
w^{(3)}_{1}(e_i \wedge e_j \wedge e_k) = (-1)^r f_r$.
For the subsequent maps we need the following relation which can be easily verified:
\begin{equation}
    \label{sigigia}
    d_3(x_{2i}g_1 - x_{1i}g_2) =  X_{ji}f_j + X_{ki}f_k + X_{ri}f_r.
\end{equation}     
 We compute $w^{(2)}_1(e_1 \wedge e_2 \otimes f_1)$ and $w^{(2)}_1(e_1 \wedge e_2 \otimes f_3)$. All the other terms for $w^{(2)}_1$ can be deduced by permutation of the indexes. Explicitly, 
 $$ q^{(2)}_1(e_1 \wedge e_2 \otimes f_1) = X_{12}f_1 - X_{24}f_4 - X_{23}f_3,  \quad q^{(2)}_1(e_1 \wedge e_2 \otimes f_3) = X_{12}f_3 + X_{\hat{3}\hat{4}}(e_1^.e_2^.e_4) = 0. $$
 Thus by \eqref{sigigia}, $w^{(2)}_1(e_1 \wedge e_2 \otimes f_1) = x_{22}g_1-x_{12}g_2 $, and $w^{(2)}_1(e_1 \wedge e_2 \otimes f_3) = 0.$
 Next, setting $\varepsilon= e_1 \wedge e_2 \wedge e_3 \wedge e_4$, we obtain $$ q^{(1)}_1(\varepsilon) = \sum_{i=1}^4(-1)^i (x_{1i}u_1 + x_{2i}u_2)\otimes (-1)^i f_i.$$ 
 Hence $w^{(1)}_1(\varepsilon) = u_1 \otimes g_1 + u_2 \otimes g_2. $
 
 For $w^{(2)}_{2,1}$ it is again sufficient to compute only $w^{(2)}_{2,1}(\varepsilon \wedge e_1 \otimes f_1)$ and $w^{(2)}_{2,1}(\varepsilon \wedge e_1 \otimes f_2)$. By equation \eqref{lift2,21}, using that  
 $v^{(1)}_1(\varepsilon) \wedge d_1(e_i)=
 x_{2i}g_1 - x_{1i}g_2,$ we get
 $$ q^{(2)}_2(\varepsilon \wedge e_1 \otimes f_1)= f_2  \otimes (x_{22}g_1-x_{12}g_2) +f_3  \otimes (x_{23}g_1-x_{13}g_2)  
 $$
 $$   + f_4 \otimes (x_{24}g_1-x_{14}g_2)  + f_1 \otimes  (x_{21}g_1 - x_{11}g_2).      
 $$
 This term lifts to 
 $  w^{(2)}_2(\varepsilon \wedge e_1 \otimes f_1) = g_1 \wedge g_2. $ 
 Similarly,
 $$ q^{(2)}_2(\varepsilon \wedge e_1 \otimes f_2)= f_2  \otimes (x_{21}g_1-x_{11}g_2) +  f_3 \otimes  0  +  f_4 \otimes 0+ 
 $$ $$  - f_2 \otimes  (x_{21}g_1 - x_{11}g_2)= 0.
 $$
 lifts to 
 $  w^{(2)}_2(\varepsilon \wedge e_1 \otimes f_2) =   0.  $
 
 This computation shows that the maps $w^{(3)}_1$, $w^{(1)}_1$, $w^{(2)}_{2,1}$ corresponding to the highest graded components of the critical representations are invertible.
 These structure maps can be used to characterize whether an arbitrary $R$-module with minimal free resolution of format $(2,4,4,2)$ is perfect.
 In particular, the next theorem shows that the invertibility of these three maps is a necessary and sufficient condition for a module with minimal free resolution of format $(2,4,4,2)$ to be perfect.
 \begin{thm}
 Let $R$ be a local Gorenstein ring and let $M$ be an $R$-module having minimal free resolution of format $(2,4,4,2)$. The following conditions are equivalent:
 \begin{enumerate}
     \item $M$ is perfect.
     \item The maps $w^{(3)}_1$, $w^{(1)}_1$, $w^{(2)}_{2,1}$ are invertible.
 \end{enumerate}
 \end{thm}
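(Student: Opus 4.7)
The implication $(1)\Rightarrow (2)$ is the Buchsbaum--Rim computation carried out just above the statement: there $w^{(3)}_1$ takes the standard basis of $\bw^3 F_1$ bijectively to $\pm f_r$ and is an isomorphism $\bw^3 F_1\xrightarrow{\sim} F_2$, the element $w^{(1)}_1(\varepsilon_{1,2,3,4})=u_1\otimes g_1+u_2\otimes g_2$ defines (after trivializing $\bw^4 F_1\cong R$) an isomorphism $F_0^*\xrightarrow{\sim} F_3$, and $w^{(2)}_{2,1}(\varepsilon\wedge e_1\otimes f_1)=g_1\wedge g_2$ generates $\bw^2 F_3\cong R$. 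For the converse $(2)\Rightarrow (1)$, the plan is to use the invertibility hypotheses to exhibit bases of $F_2$ and $F_3$ in which $\FF$ is the Buchsbaum--Rim complex on $d_1$; perfectness of $M$ then follows from the classical theory of Buchsbaum--Rim complexes combined with the exactness of $\FF$.

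The reduction proceeds in two steps. First, the defining relation $d_2\circ w^{(3)}_1=q^{(3)}_1$ from \eqref{lift3,1} together with invertibility of $w^{(3)}_1$ allows us to adopt the basis $\tilde f_r:=w^{(3)}_1(\varepsilon_{\{1,\ldots,4\}\setminus\{r\}})$ of $F_2$; in this basis, $d_2(\tilde f_r)=m_{ij}e_k-m_{ik}e_j+m_{jk}e_i$ coincides, up to signs absorbable into the basis, with the antisymmetric Buchsbaum--Rim matrix of $d_2$ displayed in \eqref{buchsbaumrymcomplex}. Next, writing $w^{(1)}_1(\varepsilon_{1,2,3,4})=u_1\otimes a_1+u_2\otimes a_2$, invertibility of $w^{(1)}_1$ means exactly that $\{a_1,a_2\}$ is a basis of $F_3$; the lifting identity $(1_{F_0}\otimes d_3)\circ w^{(1)}_1=q^{(1)}_1$ from \eqref{lift1,1}, together with the form of $w^{(3)}_1$ just fixed, then forces $d_3(a_i)=\sum_j(-1)^{j+1}x_{ij}\tilde f_j$, which (after absorbing signs into the $\tilde f_j$) is the matrix of $d_3$ displayed in \eqref{buchsbaumrymcomplex}.

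Having rewritten $\FF$ as the Buchsbaum--Rim complex on $d_1$, exactness of $\FF$ (automatic for a minimal resolution) together with injectivity of $d_3$ yields, via the Buchsbaum--Eisenbud criterion, $\mathrm{grade}\,I_2(d_3)\geq 3$; but $d_3$ and $d_1$ share the same set of entries, so $\mathrm{grade}\,I_2(d_1)\geq 3$, and since $I_2(d_1)\subseteq \mathrm{ann}\,M$ we conclude $\mathrm{grade}\,\mathrm{ann}\,M=3=\mathrm{pd}\,M$, i.e.\ $M$ is perfect. Invertibility of $w^{(2)}_{2,1}$ is then automatic from the Buchsbaum--Rim shapes of $d_2$ and $d_3$, recovering the calculation $w^{(2)}_{2,1}(\varepsilon\wedge e_1\otimes f_1)=a_1\wedge a_2$. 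The main technical point to handle is the precise meaning of \emph{invertibility} of these structure maps, which is representation-theoretic (e.g.\ $w^{(1)}_1(\varepsilon_{1,2,3,4})$ being a non-degenerate element of $F_0\otimes F_3$, equivalently inducing an isomorphism $F_0^*\xrightarrow{\sim}F_3$) rather than a bare rank-equality isomorphism, and in controlling the non-uniqueness of $w^{(3)}_1$ (defined only modulo $\mathrm{im}\,d_3$) so that the second-step identification of $d_3$ is consistent.
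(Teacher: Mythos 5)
Your proposal is correct and follows essentially the same route as the paper: the forward direction is the explicit Buchsbaum--Rim computation preceding the statement, and the converse uses invertibility of $w^{(3)}_1$ to normalize the basis of $F_2$ and the lifting identity $(1_{F_0}\otimes d_3)\circ w^{(1)}_1=q^{(1)}_1$ together with invertibility of $w^{(1)}_1$ to show that the entries of $d_1$ are obtained from those of $d_3$ by an invertible $2\times 2$ change, hence $I_2(d_1)=I_2(d_3)$ and perfectness follows from the grade of $I_2(d_3)$. The paper likewise does not use invertibility of $w^{(2)}_{2,1}$ in the converse, exactly as you observe.
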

 
 \begin{proof}
 The implication $1. \to 2.$ follows from the above computation. Thus assume condition 2. and consider the minimal free resolution of $M$
 $$\FF : 0 \longrightarrow F_3 \buildrel{d_3}\over\longrightarrow  F_2 \buildrel{d_2}\over\longrightarrow F_1 \buildrel{d_1}\over\longrightarrow F_0 \longrightarrow M. $$
 Say that $d_1$ has entries $z_{ij}$ and maximal minors $Z_{ij}$ and $d_3$ has entries $x_{ij}$ and maximal minors $X_{ij}$. It suffices to show that $I_2(d_1)= I_2(d_3)$.
Since $w^{(3)}_{1}$ is invertible, there exist bases of $F_1$ and $F_2$ such that 
 $w^{(3)}_{1}(e_i \wedge e_j \wedge e_k) = \pm f_r$. 
 Following the formulas used previously and computing $q^{(1)}_1(\varepsilon) = d_3(w^{(1)}_1(\varepsilon))$, we get $ q^{(1)}_1(\varepsilon) = \sum_{i=1}^4(-1)^i (z_{1i}u_1 + z_{2i}u_2)\otimes f_i$.
 Since $w^{(1)}_1$ is invertible, we can say that its matrix (of size $2 \times 2$) has entries $\lambda_{ij}$ such that $\lambda =\lambda_{11}\lambda_{22}-\lambda_{12}\lambda_{21}$ is a unit. Applying the differential $d_3$ to $w^{(1)}_1(\varepsilon)$ gives 
 $$ d_3(w^{(1)}_1(\varepsilon)) = d_3 (\lambda_{11}(u_1 \otimes g_1)+ \lambda_{12}(u_1 \otimes g_2) + \lambda_{21}(u_2 \otimes g_1) + \lambda_{22}(u_2 \otimes g_2) ).$$ Comparing with the expression for $q^{(1)}_1(\varepsilon)$ we obtain
 $$ \sum_{i=1}^4 z_{1i}  f_i = \lambda_{11}d_3(g_1)+ \lambda_{12}d_3(g_2), \quad   \sum_{i=1}^4 z_{2i}  f_i = \lambda_{21}d_3(g_1)+ \lambda_{22}d_3(g_2).   $$
 But we know that $d_3(g_h)= \sum_{i=1}^4 x_{hi}f_i$. Thus for $i=1,2,3,4$,
 $ z_{1i}= \lambda_{11}x_{1i}+\lambda_{12}x_{2i} $ and $  z_{2i}= \lambda_{21}x_{1i}+\lambda_{22}x_{2i}. $ Computing the $2 \times 2$ minors, we get
 $$ Z_{ij}= z_{1i}z_{2j}- z_{1j}z_{2i} = (\lambda_{11}x_{1i}+\lambda_{12}x_{2i})(\lambda_{21}x_{1j}+\lambda_{22}x_{2j})-(\lambda_{11}x_{1j}+\lambda_{12}x_{2j})(\lambda_{21}x_{1i}+\lambda_{22}x_{2i}) = $$
 $$ = (\lambda_{11}\lambda_{22}-\lambda_{12}\lambda_{21}) (x_{1i}x_{2j}- x_{1j}x_{2i}) = \lambda X_{ij}.  $$
 Since $\lambda$ is a unit, $I_2(d_1)= I_2(d_3)$.
   \end{proof}

\subsection{Perfect modules with resolution of format $(2,5,4,1)$}

The critical representations for the format $(2,5,4,1)$ are:
$$ W(d_3)= F_2^* \otimes [ F_3 \oplus \bw^3 F_1 \oplus \bigwedge^5 F_1 \otimes F_1 \otimes F_3^* ], $$
$$ W(d_2)  =  F_2 \otimes[ F_1^* \oplus  \bw^2 F_1 \otimes F_3^* \oplus \bigwedge^5 F_1 \otimes S_2 F_3^*], $$
$$ W(d_1)  =    F_0^* \otimes [ F_1 \oplus \bw^4 F_1 \otimes F_3^* ]. $$
Let $M$ be a perfect $R$-module having free resolution $\FF$ of this format.
This resolution $\FF$ is the dual of the resolution of a perfect ideal of format $(1,4,5,2)$. Such perfect ideals can be always obtained as an hyperplane section of a perfect ideal of height 2 with 3 generators.
Let $A= \lbrace x_{ij} \rbrace$ be a $2 \times 3$ matrix such that the ideal of maximal minors $I_2(A)$ has height 2. Let $y$ be an element of $R$ regular modulo $I_2(A)$. Then, setting $X_{ij}$ to be the minor of $A$ relative to columns $i,j$, we can express $\FF$ as follows:
\begin{equation}
\label{2451complex}
\FF : 0 \longrightarrow F_3 \buildrel{d_3}\over\longrightarrow  F_2 \buildrel{d_2}\over\longrightarrow F_1 \buildrel{d_1}\over\longrightarrow F_0 \longrightarrow M
\end{equation}
where the differentials are
$$d_3 =\bmatrix -X_{23}   \\ X_{13}  \\ -X_{12}  \\ -y  \endbmatrix, 
d_2=\bmatrix -y & 0 & 0 & X_{23} \\  0 & -y &  0& -X_{13}\\ 0 & 0 &  -y & X_{12} \\
x_{11} & x_{12} & x_{13} & 0 \\ x_{21} & x_{22} & x_{23} & 0
\endbmatrix, 
d_1=\bmatrix x_{11} & x_{12} & x_{13} & y & 0 \\ x_{21} & x_{22} & x_{23} & 0 & y \endbmatrix. $$

Let us compute all the structure maps for this complex $\FF$. Let $i,j,k$ denote distinct indices among $1,2,3$ with $i<j$.
The maps in the first graded components give
$$ e_1^.e_2^.e_3= f_4, \quad e_i^.e_j^.e_4= x_{2j}f_i - x_{2i}f_j, \quad e_i^.e_j^.e_5= x_{1j}f_i - x_{1i}f_j, \quad e_i^.e_4^.e_5= -yf_i,   $$
$$ w^{(2)}_1(e_i \wedge e_j \otimes f_k)= (-1)^{k}g, \quad  w^{(2)}_1(e_i \wedge e_4 \otimes f_4)= x_{2i}g, \quad w^{(2)}_1(e_i \wedge e_5 \otimes f_4)= x_{1i}g,  $$
$$ w^{(2)}_1(e_4 \wedge e_5 \otimes f_k)= yg, \quad w^{(1)}_1(e_i, e_j,e_k, e_4)= -g \otimes u_1, \quad w^{(1)}_1(e_i, e_j,e_k, e_5)= -g \otimes u_2.  $$
All the other entries that we do not mention are zero. Using formula \eqref{lift3,21} to compute $q^{(3)}_{2,1}$, we obtain that the only nonzero entries are $q^{(3)}_{2,1}(\varepsilon \otimes e_i)$ with $i=1,2,3$. For $i=1$, we have
$$ q^{(3)}_{2,1}(\varepsilon \otimes e_1)= f_4 \otimes -yf_1 - [x_{22}f_1 - x_{21}f_2] \otimes [x_{13}f_1 - x_{11}f_3]+ [x_{23}f_1 - x_{21}f_3] \otimes [x_{12}f_1 - x_{11}f_2]= $$
$$ = f_1 \otimes [f_1 X_{23} - f_2 X_{13}+ f_3X_{12} - y f_4] = f_1 \otimes d_3(g).  $$
Thus, by symmetry of $e_1, e_2, e_3$ in $\FF$, $w^{(3)}_{2,1}(\varepsilon \otimes e_i) = (-1)^{i+1}f_i \otimes g.$

Finally, we use formula \eqref{lift2,22} to compute $q^{(2)}_{2,2}$, noticing that the map $w^{(3)}_{2,2}$ is identically zero for this format. After checking that $ q^{(2)}_{2,2}(\varepsilon \otimes f_h)= 0 $ for $h\neq 4$, we compute
$$  q^{(2)}_{2,2}(\varepsilon \otimes f_4)= \sum_{i=1}^3 (-1)^{i+1}[(x_{1k}f_j - x_{1j}f_k) \otimes x_{2i}g - (x_{2k}f_j - x_{2j}f_k) \otimes x_{1i}g]+  $$
$$ +f_4 \otimes yg +\frac{1}{2}f_4 \otimes 2yg = 2g \otimes -d_3(g).   $$
Thus $w^{(2)}_{2,2}(\varepsilon \otimes f_4)= -2g\otimes g.$

\section{Buchsbaum--Rim linkage}\label{sec:BRlinkage}

The aim of this section is to describe how, assuming to know the structure maps of a free resolution, certain structure maps of the free resolution of a linked module can be also computed. We define a module $M'$ to be linked to $M$ if a free resolution of $M'$ is defined as the dual of the mapping cone of the comparison map of the minimal free resolution of $M$ with a Buchsbaum--Rim subcomplex.
We refer to it as {\it Buchsbaum--Rim linkage} and, analogously to classical linkage, we define the linkage class of a module.
This study prompts a characterization of the modules with small Betti numbers (formats $(2,5,4,1)$, $(2,5,5,2)$, $(2,6,5,1)$), which are in the linkage class of a perfect module resolved by a Buchsbaum--Rim complex (we allow also the case of cyclic modules resolved non-minimally by a Buchsbaum--Rim complex).

For classical linkage of ideals the same topic has been investigated for the multiplicative structure in \cite{AKM88} and for more general structure maps in \cite{GNW}.

\subsection{The free resolution of a linked module}

Let $R$ be a Gorenstein local (or graded) ring with maximal ideal $\m$. Let $M$ be a perfect $R$-module having free resolution of length three. The minimal free resolution of $M$ is
\begin{equation}
\label{basecomplex}
\A: 0 \longrightarrow A_3 \buildrel{a_3}\over\longrightarrow  A_2 \buildrel{a_2}\over\longrightarrow A_1 \buildrel{a_1}\over\longrightarrow A_0.
\end{equation}
Set $r_i= \rank \, A_i$. We generally assume that $r_0=2$ and $r_1 \geq 5$.
Denote the entries of the matrices of $a_1, a_2, a_3$ respectively by $\lbrace x_{ij} \rbrace$, $\lbrace y_{ij} \rbrace$, $\lbrace z_{ij} \rbrace$.
As before denote the basis of $A_1, A_2, A_3$ respectively by $\lbrace e_1, \ldots, e_{r_1}  \rbrace$, $\lbrace f_1, \ldots, f_{r_2} \rbrace$, $\lbrace g_1, \ldots, g_{r_3}  \rbrace$ and the basis of $A_1^*, A_2^*, A_3^*$ by $\lbrace \epsilon_1, \ldots, \epsilon_{r_1}  \rbrace$, $\lbrace \phi_1, \ldots, \phi_{r_2} \rbrace$, $\lbrace \gamma_1, \ldots, \gamma_{r_3}  \rbrace$.
We also denote by $\langle \cdot, \cdot \rangle$ the usual evaluation of an element of a module with respect to an element of its dual. 

Let $B_1 \cong R^4$ be a free module and define a linear map $b_1: B_1 \to B_0 \cong A_0 $, represented by a $2 \times 4$ matrix, whose columns are linear combinations of the columns of $d_1$, and such that $I_2(b_1)$ has height 3. Since $I_2(a_1)$ has height 3, any generic choice of $b_1$ will satisfy this property. By the results in Section 2.2, the cokernel of $b_1$ is resolved by a Buchsbaum--Rim complex of format $(2,4,4,2)$.

Let $\BB$ be the Buchsbaum--Rim complex resolving the cokernel of $b_1$ and for $i=1,2,3$ let $\alpha_i: B_i \to A_i$ be a  map obtained by lifting the identity map $B_0 \to A_0 $. Observe that $\alpha_1$ is simply defined by the relation  $a_1\alpha_1 = b_1$. Maps $\alpha_2$, $\alpha_3$ are not unique, but we will show that they are unique after fixing a choice of the maps $w^{(3)}_1$ and $w^{(1)}_1$ for the complex $\A$. This fact is analogous to what happens for the case of ideals, where the comparison maps with the Koszul complex on the generators are described using the multiplicative structure.

The modules $A_0$ and $B_0$ will be always identified and their basis will be called $\lbrace u_1, u_2 \rbrace$.
Take basis for $B_1$ equal to $\lbrace s_1, s_2, s_3, s_4  \rbrace$, basis for $B_2$ equal to $\lbrace t_1, t_2, t_3, t_4 \rbrace$ and basis for $B_3$ equal to $ \lbrace \omega_1, \omega_2 \rbrace $.
For $i=1,2,3$, let $\tau_i$ be the isomorphism $B_i^* \to B_{3-i}$ induced by the self-dual structure of $\BB$. Define maps $\beta_i: A_i^* \to B_{3-i}$ setting $\beta_i:= \tau_i \alpha_i^*$. 

The mapping cone of the complex map $\A^* \to \BB$ defined by the maps $\beta_i$ gives a free resolution $\DD$ (not necessarily minimal) of a perfect module $M'$. We say that a module $M'$ arising in this way is \it Buchsbaum--Rim linked (BR-linked) \rm to $M$.
We have 
\begin{equation}
\label{mappingcone}
\DD: 0 \longrightarrow A_1^* \buildrel{d_3}\over\longrightarrow  A_2^* \oplus B_2 \buildrel{d_2}\over\longrightarrow A_3^* \oplus B_1 \buildrel{d_1}\over\longrightarrow R.
\end{equation}
The free modules in the complex $\DD$ will be denoted by $D_3, D_2, D_1$.
The differentials are given by the following formulas:
$$ d_1 = \bmatrix \beta_3 & b_1 \endbmatrix; \quad  d_2 = \bmatrix a_3^* & 0 \\ -\beta_2 & -b_2 \endbmatrix; \quad d_3 = \bmatrix a_2^*  \\ \beta_1  \endbmatrix. $$
We show how one can express the maps $\beta_i$ in terms of the structure maps. Since our aim is to study conditions on structure maps that show that a given module is in the linkage class of a module resolved by a Buchsbaum--Rim complex, we are interested in linking in such a way that the total Betti number (i.e. the sum of all Betti numbers of the minimal free resolution) does not increase. Thus, for simplicity we consider only minimal BR-linkage. We call a BR-linkage {\it minimal} if, up to some row and column operations on $a_1$, the matrix of $b_1$ is defined simply by taking four columns of $a_1$. We assume without loss of generality that the columns of $b_1$ are exactly the first four columns of $a_1$. Under this assumption, the map $\alpha_1$ is defined by setting $\alpha_1(s_i)= e_i$ for $i=1,2,3,4$.

\begin{prop}
\label{notation}
Let $\DD$ be obtained from $\A$ by a minimal BR-linkage such that $\alpha_1(s_i)= e_i$ for $i=1,2,3,4$. Then:
\begin{enumerate}
\item[(1)] $ \beta_1(\epsilon_k)= \left\{ \begin{array}{cc} t_k &\mbox{if } k \leq 4, \\
       0  &\mbox{ otherwise. } \\
    \end{array}\right.$ 
\item[(2)] $\beta_2(\phi_h)=  \langle e_2^.e_3^.e_4, \phi_h \rangle s_1 - \langle e_1^.e_3^.e_4, \phi_h \rangle s_2 + \langle e_1^.e_2^.e_4, \phi_h \rangle s_3 -\langle e_1^.e_2^.e_3, \phi_h \rangle s_4$.
\item[(3)] $ \beta_3(\gamma_t)= \sum_{j=1}^2 \langle w^{(1)}_1(e_1 \wedge e_2 \wedge e_3 \wedge e_4), \gamma_t \otimes u_j^* \rangle u_j. $
\end{enumerate}
\end{prop}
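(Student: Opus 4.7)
The plan is to unpack the definition $\beta_i = \tau_i \circ \alpha_i^*$ for $i=1,2,3$: compute each comparison map $\alpha_i : B_i \to A_i$ from the identities $a_i\alpha_i = \alpha_{i-1} b_i$ (with $\alpha_0 = \mathrm{id}_{B_0}$), express it in terms of the chosen structure maps $w^{(3)}_1$ and $w^{(1)}_1$ of $\A$, then dualize and apply the self-duality isomorphism $\tau_i$ of $\BB$. This parallels the use of the Koszul comparison map in the classical cyclic linkage calculations of \cite{AKM88, GNW}.

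Part (1) is immediate. The minimality assumption gives $\alpha_1(s_k) = e_k$ for $k \leq 4$, so $\alpha_1^*(\epsilon_k) = s_k^*$ for $k \leq 4$ and $0$ otherwise; applying $\tau_1 : s_k^* \mapsto t_k$ yields the formula. For part (2), I would write out each column of the skew-symmetric BR matrix $b_2$ of \eqref{buchsbaumrymcomplex} and push it forward by $\alpha_1$; matching the result with the explicit formula \eqref{lift3,1} for $q^{(3)}_1$ evaluated at $\varepsilon_{1,\ldots,\hat{k},\ldots,4}$ identifies $\alpha_1 b_2(t_k)$ with $(-1)^k\,q^{(3)}_1(\varepsilon_{1,\ldots,\hat{k},\ldots,4})$, since the minors $X_{\hat{k}\hat{l}}$ of $b_1$ coincide with the relevant $m_{ij}$ appearing in \eqref{lift3,1}. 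Since $a_2 w^{(3)}_1 = q^{(3)}_1$ by definition, this forces $\alpha_2(t_k) = (-1)^k w^{(3)}_1(\varepsilon_{1,\ldots,\hat{k},\ldots,4})$; the residual lifting ambiguity (elements of $\ker a_2 = \mathrm{Im}\,a_3$) is exactly the freedom in the choice of $w^{(3)}_1$, so fixing $w^{(3)}_1$ fixes $\alpha_2$. Dualizing and applying $\tau_2$ produces the formula in (2), the alternating signs being accounted for jointly by the positions of $t_k$ in $b_2$ and by $\tau_2$.

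For part (3), since $a_3$ is injective, $\alpha_3$ is uniquely determined by $a_3\alpha_3 = \alpha_2 b_3$. The matrix of $b_3$ in the BR complex is the transpose of $b_1$, so $b_3(\omega_h) = \sum_{i=1}^4 x_{hi}\, t_i$; combining this with the expression for $\alpha_2$ just obtained yields
$$a_3\alpha_3(\omega_h) = \alpha_2 b_3(\omega_h) = \sum_{i=1}^4 (-1)^i x_{hi}\, w^{(3)}_1(\varepsilon_{1,\ldots,\hat{i},\ldots,4}).$$
Comparing this expression with \eqref{lift1,1} and recalling that $d_1(e_i) = x_{1i} u_1 + x_{2i} u_2$, one recognizes the right-hand side (up to a global sign) as the coefficient of $u_h$ in $q^{(1)}_1(\varepsilon_{1,2,3,4})$. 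But $w^{(1)}_1(\varepsilon_{1,2,3,4})$ is by construction the lift of $q^{(1)}_1(\varepsilon_{1,2,3,4})$ along the injective map $1_{A_0}\otimes a_3$, so the coefficient of $u_h$ in $w^{(1)}_1(\varepsilon_{1,2,3,4})$ equals $\alpha_3(\omega_h)$ up to the same sign. Dualizing and applying $\tau_3 : \omega_h^* \mapsto u_h$ yields the formula in (3).

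The argument is structurally straightforward once the correct lifts are identified; the real work is the bookkeeping of signs, which arise from three independent sources: the skew-symmetric layout of the BR matrix $b_2$, the alternating signs in the definitions \eqref{lift3,1}--\eqref{lift1,1} of $q^{(3)}_1$ and $q^{(1)}_1$, and the signs implicit in the self-duality isomorphisms $\tau_i$ of the BR complex of format $(2,4,4,2)$. The main thing to verify is that these three contributions combine coherently to give the clean formulas stated in (1)--(3).
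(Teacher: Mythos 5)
Your proposal is correct and follows essentially the same route as the paper's proof: unpack $\beta_i=\tau_i\alpha_i^*$, determine $\alpha_2$ and $\alpha_3$ by lifting $\alpha_1 b_2$ and $\alpha_2 b_3$ and matching against the defining formulas \eqref{lift3,1} and \eqref{lift1,1} for $q^{(3)}_1$ and $q^{(1)}_1$, then dualize. The only difference is presentational — you defer the final sign bookkeeping, while the paper absorbs the signs into its convention $\tau_1(s_j^*)=(-1)^jt_j$ — but the substance of the argument is identical.
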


\begin{proof}
The isomorphism $\tau_1$ identifies $s_j^*$ with $(-1)^{j} t_j = s_{k_1}^.s_{k_2}^.s_{k_3}$ where $k_1< k_2 < k_3$ are the three indices in $ \lbrace 1,2,3,4 \rbrace $ distinct from $j$. The map $\tau_2$ is the dual of $\tau_1$ and $\tau_3$ simply identifies $\omega_j$ with $u_j^{*}$. Item (1) is clear by the definition of $\alpha_1$.

For item (2), we need to prove that $ \alpha_2(t_j)= \sum_{h=1}^{r_2} (-1)^{j} \langle  e_{k_1}^.e_{k_2}^.e_{k_3}, \phi_h \rangle f_h $ (modulo the kernel of $a_3$). Hence, we need to check that $a_2(\sum_{h=1}^{r_2} (-1)^{j} \langle  e_{k_1}^.e_{k_2}^.e_{k_3}, \phi_h \rangle f_h)= \alpha_1b_2(t_j)$. By symmetry, we choose $j=1$. Thus, by equation \eqref{lift3,1} and by definition of the Buchsbaum--Rim complex we get 
$$ \sum_{h=1}^{r_2} \langle  e_{2}^.e_{3}^.e_{4}, \phi_h \rangle a_2(f_h) = \sum_{k=1}^{r_1} \left(\sum_{h=1}^{r_2} y_{kh}\langle  e_{2}^.e_{3}^.e_{4}, \phi_h \rangle \right) e_k = X_{23}e_4 - X_{24}e_3 + X_{34}e_2 = -\alpha_1b_2(t_1). $$
Analogously, for item (3) we check that, after fixing a choice of $\alpha_2$, we have $$a_3 \left( \sum_{t=1}^{r_3} \langle  w^{(1)}_1(e_1 \wedge \ldots \wedge e_4), \gamma_t \otimes u_j^* \rangle g_t \right)= \alpha_2b_3(\omega_j).$$
This follows clearly by equation \eqref{lift1,1} and shows that $\alpha_3(\omega_j)= \sum_{t=1}^{r_3} \langle  w^{(1)}_1(e_1 \wedge \ldots \wedge e_4), \gamma_t \otimes u_j^* \rangle g_t$. Item (3) follows immediately.
\end{proof}

\begin{remark}
\label{minimallinkage}
 The formulas in Proposition \ref{notation} have some important consequences. The fact that the entries of $\beta_1$ are only zeros and ones shows that the elements $\epsilon_1, \epsilon_2, \epsilon_3, \epsilon_4 $ are redundant as basis elements of $D_3$ and can been removed to minimize the free resolution.
 
 The basis elements $t_j$ of $D_2$ can be expressed as linear combinations of the elements $\phi_h$, while the elements $\omega_l$ of $D_3$ can be expressed as linear combinations of the elements $\epsilon_k$.
 In particular, recalling that $\lbrace x_i \rbrace$, $\lbrace y_{ij} \rbrace$, $\lbrace z_{ij} \rbrace$ denote the entries of $a_1, a_2, a_3$, we get:
 $$  t_j = \sum_{h=1}^{r_2} y_{jh} \phi_h, \quad \omega_l = \sum_{k=1}^{r_1} x_{lk} \epsilon_k.  $$
This shows that the rank of $A_2$ and $D_2$ are the same and the total Betti number of $\DD$ is less or equal than the total Betti number of $\A$.
\end{remark}

We define now formally the notion of linkage class of a Buchsbaum--Rim complex. We allow in this class also non-minimal complexes that correspond to free resolutions of cyclic modules.
\begin{definition}
\label{defBRlicci}
Let $R$ be a local (or graded) Gorenstein ring with infinite residue field.
Let $M$ be a perfect module having free resolution of format $(2,r_1,r_2,r_3)$. Then $M$ is in the linkage class of a BR-complex if there exists a sequence of $R$-modules 
$$ M= M_0 \sim M_1 \sim \ldots \sim M_n   $$ such that $M_i$ is BR-linked to $M_{i-1}$ for every $i \geq 1$, and $M_n$ is resolved by a (not necessarily minimal) Buchsbaum--Rim complex of format $(2,4,4,2)$.
\end{definition}

As a consequence of Proposition \ref{notation}, observe that (working over a local ring with infinite residue field) if the map $ w^{(1)}_1$ is nonzero modulo the maximal ideal of $R$, the module $M$ can be BR-linked to a non-minimal resolution $\DD$ such that some entries of the first differential $d_1$ are units. The linked module $M'$ in this case is a cyclic module (i.e. $M'= R/I$ for $I$ a perfect ideal of height 3 in $R$). In the next lemma we show that BR-linkage of a non-minimal free resolution of a cyclic module $R/I$ with $r_0=2$ having a unit in the entries of $d_1$ is equivalent to the standard linkage of the ideal $I$. This fact will allow us to apply Definition \ref{defBRlicci} to construct a sequence of BR-linked modules also in the case some of them are cyclic modules.

\begin{lem}
\label{cyclic}
Let $I=(x_1, \ldots, x_n)$ be a perfect ideal of height 3. 
Consider a non-minimal free resolution $\A$ of $R/I$ such that 
$$ a_1= \bmatrix 1 & 0 & \ldots & 0 \\
                 0 & x_1 & \ldots & x_n \endbmatrix.  $$
Assume without loss of generality that $x_1, x_2, x_3$ is a regular sequence and let $\DD$ be the free resolution obtained as BR-linkage of $\A$ choosing $b_1$ as the submatrix on the first four columns of $a_1$. Then
$$ d_1= \bmatrix 1 & 0 & \ldots & 0 \\
0 & \chi_1 & \ldots & \chi_s \endbmatrix,  $$ 
with $(\chi_1, \ldots, \chi_s) = (x_1, x_2, x_3) : I$.
\end{lem}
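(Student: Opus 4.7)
The plan is to combine explicit computations of structure maps for $\A$ (via Proposition~\ref{notation}) with classical linkage theory. First I would exploit the very special form of $a_1$: all its nonzero $2\times 2$ minors are of the shape $m_{1,j+1}=x_j$ for $j=1,\dots,n$, because any two columns from positions $\geq 2$ produce a zero minor. By \eqref{lift3,1}, this forces $q^{(3)}_1(e_i\wedge e_j\wedge e_k)=0$ whenever $i\geq 2$, so one may fix the (nonunique) lift $w^{(3)}_1$ to vanish on every triple not involving $e_1$. Substituting this choice into \eqref{lift1,1}, the $j=1$ summand of $q^{(1)}_1(\varepsilon_{1,\dots,4})$ equals $u_1\otimes w^{(3)}_1(e_2\wedge e_3\wedge e_4)=0$, while each summand with $j\geq 2$ satisfies $a_1(e_j)=x_{j-1}u_2$. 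Hence $q^{(1)}_1(\varepsilon_{1,\dots,4})\in u_2\otimes A_2$, and because $a_3$ is injective (so is $1_{A_0}\otimes a_3$), its unique lift $w^{(1)}_1(\varepsilon_{1,\dots,4})$ lies in $u_2\otimes A_3$. Proposition~\ref{notation}(3) then yields $\beta_3(\gamma_t)=b_t u_2$ for scalars $b_t\in R$; that is, the first row of the $2\times r_3$ matrix of $\beta_3$ vanishes identically.

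Writing $d_1=\begin{pmatrix}\beta_3 & b_1\end{pmatrix}$, the preceding step gives
$$d_1=\begin{pmatrix} 0 & \cdots & 0 & 1 & 0 & 0 & 0\\ b_1 & \cdots & b_{r_3} & 0 & x_1 & x_2 & x_3\end{pmatrix},$$
and a column permutation moving the unit entry to the first position produces the claimed matrix form with $(\chi_1,\dots,\chi_s)=(b_1,\dots,b_{r_3},x_1,x_2,x_3)$, so $\Coker(d_1)=R/(\chi_1,\dots,\chi_s)$. To identify this ideal with $(x_1,x_2,x_3):I$, I would invoke classical linkage: $\BB$ is a (non-minimal) free resolution of $\Coker(b_1)=R/(x_1,x_2,x_3)$, the complex $\A$ is a (non-minimal) free resolution of $R/I$, and the chain map $\alpha:\BB\to\A$ lifts the natural surjection $R/(x_1,x_2,x_3)\twoheadrightarrow R/I$ induced by $(x_1,x_2,x_3)\subset I$. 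Since the Peskine--Szpiro linkage construction is insensitive to the particular free resolutions chosen (any two are homotopy equivalent and mapping cones respect homotopy equivalence), $\DD$ is a free resolution of $R/((x_1,x_2,x_3):I)$. Comparing cokernels then forces $(\chi_1,\dots,\chi_s)=(x_1,x_2,x_3):I$.

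The main obstacle is the first paragraph, namely showing that the $u_1$-component of $w^{(1)}_1(\varepsilon_{1,\dots,4})$ vanishes. This hinges on making a compatible choice of the nonunique lift $w^{(3)}_1$ and then using injectivity of $a_3$ to propagate that vanishing uniquely to $w^{(1)}_1$; everything afterwards is column bookkeeping and a direct appeal to standard linkage theory.
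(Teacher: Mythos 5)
Your proposal is correct and follows the same overall strategy as the paper: compute $w^{(3)}_1$ and $w^{(1)}_1$ for the special shape of $a_1$, read off $\beta_3$ from Proposition \ref{notation}(3) to get the form of $d_1$, and then appeal to classical linkage. The differences are in the execution. For the structure maps, the paper decomposes $\A$ as $\A'\oplus(\text{split exact})$, where $\A'$ is the minimal resolution of $R/I$, and chooses the lifts so that $w^{(3)}_1$ and $w^{(1)}_1$ restrict to the classical multiplicative structure of $\A'$; in particular it takes $e_1^.e_2^.e_3=f_0$ (not $0$) and gets $w^{(1)}_1(\varepsilon)=g_0\otimes u_1+(e_1\cdot e_2\cdot e_3)\otimes u_2$, so $d_1$ has a repeated unit column rather than your zero column --- either way one reaches the stated form after column operations, and your choice of the zero lift is equally legitimate. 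For the identification of the ideal, the paper's bookkeeping pays off: having recognized the second row of $\beta_3$ as the entries of the classical product $e_1\cdot e_2\cdot e_3\in A_3'$, it can directly cite the explicit formula $(x_1,x_2,x_3):I=(x_1,x_2,x_3)+(\text{entries of }e_1\cdot e_2\cdot e_3)$ from \cite{AKM88}, \cite{GNW}. You instead argue abstractly that the dual mapping cone is homotopy equivalent to the classical Peskine--Szpiro cone and compare annihilators of the cokernels; this works, but note it silently uses that the self-duality $\tau$ of $\BB$ is compatible (up to homotopy) with that of the Koszul complex sitting inside it --- a point at the same level of ``standard linkage'' hand-waving as the paper's own citation, so not a gap, just worth being aware of. Your version proves exactly the statement of the lemma; the paper's version additionally identifies the generators $\chi_t$ explicitly.
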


\begin{proof}
To get the entries of $d_1$ we need to compute the maps $w^{(3)}_1$, $w^{(1)}_1$ of the complex $\A$. Let $\A'$ be a minimal free resolution of $I=(x_1, \ldots, x_n)$ and denote the element of the basis of $A_1'$, $A_2'$, $A_3'$ by $e_k$, $f_h$, $g_t$ with $k,h,t \geq 1 $.
We can define the complex $\A$ as direct sum of $\A'$ with a split exact complex such that for $i=1,2,3$, the basis of $A_i$ is given by the basis of $A_i'$ together with an extra element which we call respectively $e_0$, $f_0$, $g_0$. These elements satisfy the relations $a_1(e_0)= u_1$, $a_2(f_0)=0$, $a_3(g_0)=f_0$.
The maps $w^{(3)}_1(\A')$, $w^{(1)}_1(\A')$ are part of the standard multiplicative structure (see \cite{GNW}). 
Looking at $w^{(3)}_1(\A)$, we adopt the notation $e_i^.e_j^.e_k = w^{(3)}_1(\A)(e_i\wedge e_j \wedge e_k)$. Thus we have $a_2(e_0^.e_i^.e_j) = x_i e_j - x_j e_i $ and $a_2(e_1^.e_2^.e_3) = 0 $. Therefore we can choose to set $  e_0^.e_i^.e_j = w^{(3)}_1(\A')(e_i \wedge e_j)  $ and $e_1^.e_2^.e_3 = f_0 $. Working similarly for the map $w^{(1)}_1(\A)$, we get $$ w^{(1)}_1(e_0 \wedge e_1 \wedge e_2 \wedge e_3) = g_0 \otimes u_1 + \left[ \sum_{t=1}^{s-3} \langle w^{(1)}_1(\A')(e_1 \wedge e_2 \wedge e_3), \gamma_t \rangle   g_t \right] \otimes u_2. $$
The thesis now follows from standard formulas for linkage of perfect ideals of grade 3 (see \cite{AKM88}, \cite{GNW}). 
\end{proof}

Looking at the computations of structure maps done in Subsection 2.3 we derive an easy corollary regarding the format $(2,5,4,1)$.
\begin{cor}
\label{cor2541}
Let $R$ be a Gorenstein local ring with infinite residue field.
Let $M$ be a perfect module having free resolution of format $(2,5,4,1)$. Then $M$ is BR-linked to a module resolved by a (non-minimal) Buchsbaum--Rim complex of format $(2,4,4,2)$.
\end{cor}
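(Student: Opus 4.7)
The plan is to run a single explicit minimal BR-linkage on the model resolution \eqref{2451complex} and read off the conclusion from the structure-map computations already performed in Subsection 2.3. The key input is the identity $w^{(1)}_1(e_1 \wedge e_2 \wedge e_3 \wedge e_4) = -g \otimes u_1$: this says that $w^{(1)}_1$ is a unit modulo $\m$ on the relevant wedge, which by the discussion preceding Lemma \ref{cyclic} is exactly the condition that forces the linked module to be cyclic.

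Concretely, I would choose $b_1$ to be the submatrix of $a_1$ given by its first four columns, so that $\alpha_1(s_i) = e_i$ for $i \leq 4$. Proposition \ref{notation}(3) then yields $\beta_3(\gamma_1) = -u_1$; hence the mapping-cone differential $d_1 = [\beta_3 \mid b_1]$ carries the unit $-1$ in its $(1,1)$-entry, while its second row reads $(0, x_{21}, x_{22}, x_{23}, 0)$. Elementary column operations clear the first row and show that the cokernel of $d_1$ equals $M' = R/(x_{21}, x_{22}, x_{23})$, a cyclic module cut out by a perfect height-$3$ ideal.

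It then remains to exhibit a non-minimal Buchsbaum--Rim complex of format $(2,4,4,2)$ resolving $M'$. I would take the $2 \times 4$ matrix $b_1'$ whose first column is $(1,0)^{T}$ and whose remaining columns are $(0, x_{2j})^{T}$ for $j = 1,2,3$; its $2 \times 2$ minors generate $(x_{21}, x_{22}, x_{23})$, which has grade $3$, and its cokernel is precisely $M'$. The Buchsbaum--Rim complex attached to $b_1'$ as in Subsection 2.2 therefore gives the desired (non-minimal) resolution of format $(2,4,4,2)$, which by definition witnesses $M \sim M'$.

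The one genuinely non-routine point is the verification that the chosen $b_1$ meets the BR-linkage hypothesis $\mathrm{grade}\, I_2(b_1) = 3$. I would argue this directly: $I_2(b_1) = I_2(A) + y(x_{21}, x_{22}, x_{23})$, and any minimal prime $\mathfrak{p}$ over $I_2(b_1)$ either contains $y$ --- in which case $\mathfrak{p} \supseteq I_2(A) + (y)$, of grade $3$ by the hypothesis that $y$ is regular modulo $I_2(A)$ --- or else contains $(x_{21}, x_{22}, x_{23})$, whence $\mathfrak{p} \supseteq I_2(A) + (x_{21}, x_{22}, x_{23}) = (x_{21}, x_{22}, x_{23})$ (the equality holding because every $X_{ij}$ has some $x_{2k}$ as a factor), again of grade $3$ under our genericity assumptions on $a_1$. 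Everything else in the argument is a direct application of the formulas from Section 2.
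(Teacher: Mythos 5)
Your overall strategy coincides with the paper's: perform one minimal BR-link, use the invertible entry of $w^{(1)}_1$ to see that the linked module is cyclic and cut out by a (complete intersection) ideal of grade $3$, and then realize its Koszul resolution as a non-minimal Buchsbaum--Rim complex via a $2\times 4$ matrix with a unit entry. However, there is a genuine gap at exactly the point you single out as non-routine, namely the verification that $\mathrm{grade}\, I_2(b_1)=3$ for your \emph{specific} choice of $b_1$ (the first four columns of $a_1$). In Case 2 of your argument you need $(x_{21},x_{22},x_{23})$ to have grade $3$, and you justify this by appeal to ``genericity assumptions on $a_1$'' --- but there are none: $M$ is an arbitrary perfect module of format $(2,5,4,1)$, i.e.\ $A$ is an arbitrary $2\times 3$ matrix with $\mathrm{grade}\, I_2(A)=2$ and $y$ is merely regular modulo $I_2(A)$. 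The second row of $A$ need not generate a grade-$3$ ideal. For instance, over $K[[a,b,c,d,e]]$ take
$$ A=\bmatrix a & b & c \\ d & e & 0 \endbmatrix, $$
so $I_2(A)=(ae-bd,\,cd,\,ce)$ has grade $2$ and is perfect (Hilbert--Burch); choosing any $y$ regular modulo $I_2(A)$ gives a legitimate instance of the corollary, yet $I_2(b_1)=(ae-bd,\,cd,\,ce,\,yd,\,ye)\subseteq (d,e)$ has grade $2$. For such $M$ the BR-linkage with respect to the first four columns is simply not defined (the Buchsbaum--Rim complex on $b_1$ is not acyclic), and your candidate resolution of $M'=R/(x_{21},x_{22},x_{23})$ is not a resolution either, since that ideal is not a complete intersection of grade $3$.

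The repair is precisely where the hypothesis of an infinite residue field enters, and it is what the paper does: one first performs row and column operations on $a_1$ (equivalently, takes four \emph{generic} linear combinations of its columns, which is still a minimal BR-link in the sense of Section 3.1) so that simultaneously $\langle w^{(1)}_1(e_1\wedge e_2\wedge e_3\wedge e_4),\gamma_1\otimes u_1^*\rangle$ is a unit and the submatrix on the first four columns has $I_2$ of grade $3$; the latter holds for a generic choice because $I_2(a_1)$ itself has grade $3$. After this general position step the rest of your computation (the unit in $\beta_3$ via Proposition \ref{notation}(3), the column reduction of $d_1$, the rank count forcing the linked ideal to be a grade-$3$ complete intersection, and the explicit non-minimal Buchsbaum--Rim presentation) goes through and matches the paper's argument. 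As written, though, the proof is incomplete because its key genericity claim is asserted rather than arranged.
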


\begin{proof}

Call $\A$ the minimal free resolution of $M$.
The coefficients of the map $w^{(1)}_1$ are only zeros and ones.
   
Since we are working over a local ring with infinite residue field, using a general position argument, after row and column operations on the first differential $a_1$, we can assume that $\langle w^{(1)}_1(e_1 \wedge e_2 \wedge e_3 \wedge e_4), \gamma_1 \otimes u_1^* \rangle = 1$ and the ideal of maximal minors of the submatrix on the first four columns of $a_1$ has height 3. Consider the BR-linkage of $\A$ with respect to this submatrix of $a_1$. Calling $\DD$ the free resolution of the linked module, we can compute $d_1$ using Proposition \ref{notation}. By standard arguments, using that $R$ is local, after row and column operations we can reduce to have $d_1$ in the form 
$$ d_1= \bmatrix 1 & 0 & \ldots & 0 \\
                 0 & x_1 & \ldots & x_4 \endbmatrix.  $$
Since the ideal of maximal minors of $d_1$ needs to have height 3, we get that $I=(x_1, \ldots, x_4)$ is a perfect ideal of height 3. Moreover, by Remark \ref{minimallinkage}, the rank of $D_3$ is 1. Hence $\DD$ can be reduced to a minimal resolution of $I$ of format $(1,3,3,1)$.
This implies that $\DD$ can be presented as a (non-minimal) Buchsbaum--Rim complex of format $(2,4,4,2)$.
\end{proof}

In the next subsection, we show how structure maps are computed on the free resolution of a BR-linked module, specializing to the case of formats $(2,6,5,1)$ and $(2,5,5,2)$. We expect the generalization of these results to hold also for larger formats.

\subsection{Linking higher structure maps for formats $(2,6,5,1)$ and $(2,5,5,2)$}

In this subsection we study the behavior with respect to BR-linkage of higher structure maps for the formats $(2,5,5,2)$, $(2,6,5,1)$. We focus on maps coming from the critical representation $W(d_1)$ and show that their properties characterize when a perfect module having minimal free resolution of one of these two formats is in the BR-linkage class of a Buchsbaum--Rim complex. We start with a crucial remark.

\begin{remark}
\label{thmweyman}
In the following, to prove the theorems describing how structure maps are transfered by linkage, we will need to check that certain relations hold among some of the higher structure maps. 
As already done in \cite{GNW}, we will use the fact that any $ \prod^{3}_{i=1} GL(F_i) $-equivariant set of relations among structure maps holds over an arbitrary acyclic complex of a given format if the same relations hold when specialized to a split exact complex of that format, and the structure maps are computed using generic liftings. For the proof of this statement see \cite[Theorem 2.1]{GNW}, \cite[Lemma 2.4]{w89}, \cite[Proposition 10.4]{W18}.
\end{remark}

All the computations of relations for higher structure maps that we need in the proofs of this section will be checked over a split exact complex later in Section 4.

Let us keep the notation of the previous subsection. In the beginning we assume that $\A$ is a free resolution of a perfect module $M$ of format $(2,r_1, r_2, r_3)$, which is either $(2,6,5,1)$ or $(2,5,5,2)$. 

The complex $\DD$ will denote a free resolution of a module $ M'$ minimally BR-linked to $M$. As in the previous section we assume that the link is such that $b_1$ is the submatrix of $a_1$ obtained taking the first four columns.
To denote structure maps on different complexes we adopt the notation $w^{(i)}_{j,k}(\A)$ and
$w^{(i)}_{j,k}(\DD)$. 
Since the linkage is minimal, as a consequence of Remark \ref{minimallinkage} the basis of the modules $D_1$, $D_2$, $D_3$ are
 $\lbrace \gamma_1, \ldots, \gamma_{r_3}, s_1, \ldots, s_4  \rbrace$, $\lbrace \phi_1, \ldots, \phi_{r_2} \rbrace$, $\lbrace \epsilon_5, \ldots, \epsilon_{r_1}  \rbrace$.
 
We start analyzing how the maps $w^{(3)}_{1}(\DD)$, $w^{(1)}_{1}(\DD)$ can be computed in terms of the higher structure maps of $\A$. In a similar way one could compute also the map $w^{(2)}_{1}(\DD)$, but we omit that computation, since we want to focus mainly on the maps in the critical representation $W(d_1)$.
We also omit the computation of the entries of $w^{(3)}_{1}(s_i \wedge s_j \wedge s_k)$  and $w^{(1)}_{1}(s_1 \wedge s_2 \wedge s_3 \wedge s_4)$ since they can be obtained by those on the Buchsbaum--Rim complex $\BB$ (see Subsection 2.2) and expressed in the basis of $D_2$, $D_3$ using the relations in Remark \ref{minimallinkage}. 
Set $\varepsilon:= e_1 \wedge e_2 \wedge e_3 \wedge e_4$, and as usual $e_i^.e_j^.e_k = w^{(3)}_1(e_i \wedge e_j \wedge e_k)$.

\begin{thm}
\label{w3,1link}
The maps $w^{(3)}_{1}: \bigwedge^3 D_1 \to D_2$ and $w^{(1)}_{1}: \bigwedge^4 D_1 \to D_0 \otimes D_3$ are described as follows. 
\begin{equation}
\label{w31ssg}
s_1^.s_2^.\gamma_1 = \sum_{h=1}^{r_2} \langle w^{(2)}_1(e_1 \wedge e_2 \otimes f_h),  \gamma_1 \rangle \phi_h.
\end{equation}
\begin{equation}
\label{w31sgg}
s_1^.\gamma_1^. \gamma_2 = \sum_{h=1}^{r_2} \langle w^{(2)}_{2,1}(\varepsilon \otimes e_1  \otimes f_h),  \gamma_1 \wedge \gamma_2 \rangle \phi_h.
\end{equation}
\begin{equation}
\label{w11sssg}
w^{(1)}_{1}(s_1 \wedge s_2 \wedge s_3 \wedge \gamma_1) = \sum_{j=1}^2 \sum_{k=5}^{r_1} \langle w^{(1)}_{1}(e_1 \wedge e_2 \wedge e_3 \wedge e_k),  \gamma_1 \otimes u_j^* \rangle (\epsilon_k \otimes u_j).
\end{equation}
\begin{equation}
\label{w11ssgg}
 w^{(1)}_{1}(s_1 \wedge s_2 \wedge \gamma_1 \wedge \gamma_2)  = 
\sum_{j=1}^2 \sum_{k=5}^{r_1} \langle w^{(1)}_{2,1}(\varepsilon \wedge e_k \otimes e_1 \wedge e_2),  \gamma_1 \wedge \gamma_2 \otimes u_j^* \rangle (\epsilon_k \otimes u_j).
\end{equation} 
Analogous formulas hold for all possible combinations of basis elements $\gamma_t$ and $s_j$.
\end{thm}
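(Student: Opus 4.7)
The strategy is to verify each of the four identities by showing that the stated right-hand side satisfies the defining lifting equation for the corresponding structure map on $\DD$, then reducing the resulting equalities to $\prod GL(F_i)$-equivariant identities among structure maps of $\A$ that are handled via Remark \ref{thmweyman}. By construction, $s_1^.s_2^.\gamma_1$ and $s_1^.\gamma_1^.\gamma_2$ are lifts of $q^{(3)}_1(\DD)(s_1\wedge s_2\wedge\gamma_1)$ and $q^{(3)}_1(\DD)(s_1\wedge\gamma_1\wedge\gamma_2)$ through $d_2(\DD)$, while the right-hand sides of \eqref{w11sssg} and \eqref{w11ssgg} must lift the corresponding values of $q^{(1)}_1(\DD)$ through $\mathrm{id}_{D_0}\otimes d_3(\DD)$. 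Using Proposition \ref{notation} and Remark \ref{minimallinkage}, after minimization one has
\[
 d_2(\DD)(\phi_h)=(a_3^*(\phi_h),-\beta_2(\phi_h)), \qquad d_3(\DD)(\epsilon_k)=(a_2^*(\epsilon_k),\beta_1(\epsilon_k)),
\]
with $\beta_1(\epsilon_k)=0$ for $k\geq 5$, and $\beta_2, \beta_3$ expressed through $w^{(3)}_1(\A)$ and $w^{(1)}_1(\A)$.

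For \eqref{w31ssg}, I would first establish the key observation that the map $W\colon A_2\to A_3$ defined by $W(f)=w^{(2)}_1(e_1\wedge e_2\otimes f)$ satisfies $W\circ a_3 = X_{12}\,\mathrm{id}_{A_3}$. Indeed, applying $a_3$ to $W(a_3(g_t))$ and using the defining equation $a_3\circ w^{(2)}_1=q^{(2)}_1$ together with $a_2\circ a_3=0$ gives $a_3(W a_3(g_t))=X_{12}a_3(g_t)$, whence the claim by injectivity of $a_3$. Dualizing yields $a_3^*\circ W^*=X_{12}\,\mathrm{id}_{A_3^*}$, so the $A_3^*$-component of $d_2(\DD)$ applied to the right-hand side of \eqref{w31ssg} is exactly $X_{12}\gamma_1$, matching the $\gamma_1$-coefficient of $q^{(3)}_1(\DD)(s_1\wedge s_2\wedge\gamma_1)$. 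The $B_1$-component, produced by $-\beta_2$ and Proposition \ref{notation}(2), reduces by linearity in $F_2$ to terms of the form $\langle w^{(2)}_1(e_1\wedge e_2\otimes w^{(3)}_1(e_{k_1}\wedge e_{k_2}\wedge e_{k_3})),\gamma_1\rangle$ on each $s_i$; these must coincide with the minors $m_{s_i,\gamma_1}$ of $d_1(\DD)$, which by Proposition \ref{notation}(3) are expressed through $w^{(1)}_1(\varepsilon)$. The required equality is an equivariant relation among $w^{(3)}_1(\A)$, $w^{(2)}_1(\A)$, and $w^{(1)}_1(\A)$.

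Identities \eqref{w31sgg}, \eqref{w11sssg}, and \eqref{w11ssgg} follow the same pattern. For \eqref{w31sgg} the defining lifting equation for $w^{(2)}_{2,1}$, combined with the relations $w^{(2)}_1(e_1\wedge e_i\otimes a_3(g_t))=X_{1i}g_t$ above, yields the $\gamma$-components of $d_2(\DD)$ applied to the right-hand side in terms of $w^{(3)}_{2,1}(\A)$ and the minors of $a_1$, while the $s$-components involve $w^{(1)}_1(\varepsilon)$. For \eqref{w11sssg} and \eqref{w11ssgg} I would apply $\mathrm{id}_{D_0}\otimes d_3(\DD)$; since $\beta_1(\epsilon_k)=0$ for $k\geq 5$, the $B_2$-component vanishes automatically, and the $A_2^*$-component must match the expansion of $q^{(1)}_1(\DD)$ through the formulas \eqref{w31ssg} and \eqref{w31sgg} already proved, together with the Buchsbaum--Rim expressions for $w^{(3)}_1(\DD)$ on the $s$-basis and the defining equations for $w^{(1)}_1(\A)$ and $w^{(1)}_{2,1}(\A)$ respectively.

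The main obstacle is that the equivariant relations produced by these reductions lie beyond the defining lifting equations for the structure maps of $\A$ and must be verified independently. By Remark \ref{thmweyman} it suffices to check them on a split exact complex of format $(2,r_1,r_2,r_3)$ with structure maps computed through generic liftings parametrized by defect variables. This is precisely the content of Section \ref{sec:splitex}, whose split-exact computations complete the argument.
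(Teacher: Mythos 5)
Your proposal follows essentially the same route as the paper: apply $d_2(\DD)$ (resp.\ $1_{D_0}\otimes d_3(\DD)$) to both sides of each identity, expand the right-hand side using Proposition \ref{notation} and Remark \ref{minimallinkage}, and reduce to $\prod_i GL$-equivariant relations among the structure maps of $\A$ that are then verified over a split exact complex via Remark \ref{thmweyman} (Lemmas \ref{splitw3,1} and \ref{splitw1,1}). Your direct derivation of $w^{(2)}_1(e_1\wedge e_2\otimes a_3(g_t))=X_{12}\,g_t$ from the lifting equation and the injectivity of $a_3$ is a small simplification of the first relation, which the paper instead checks on the split complex, but otherwise the arguments coincide.
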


\begin{proof}
The entries of $a_1, a_2, a_3$ are denoted by $\lbrace x_{ij} \rbrace$, $\lbrace y_{ij} \rbrace$, $\lbrace z_{ij} \rbrace$. Denote by $X_{ij}$ the $2 \times 2$ minors of $d_1$ and by $\chi_{jt}:= \langle w^{(1)}_1(\varepsilon), \gamma_t \otimes u_j \rangle$ the entries of $\beta_3$.
We divide the proof in four cases. \\
\bf Case 1: $s_1^.s_2^.\gamma_1$. \rm \\
We have to show that the two sides of \eqref{w31ssg} are equal after applying the differential $d_2$ on both. By definition
$$ d_2(s_1^.s_2^.\gamma_1) =  (x_{11}x_{22} -x_{12}x_{21})\gamma_1 - (x_{11}\chi_{21} -x_{21}\chi_{11})s_2 + (x_{12}\chi_{21} -x_{22}\chi_{11})s_1. $$
Recall that $d_2(\phi_h)= \sum_{t=1}^{r_3} z_{ht} \gamma_t + \sum_{j=1}^4 (-1)^{j} \langle e_{i_1}^.e_{i_2}^.e_{i_3}, \phi_h \rangle s_j,
 $ where $ i_1 < i_2 < i_3 $ are the indices in $ \lbrace 1,2,3,4 \rbrace $ distinct from $j$.
 Thus the image of the right-hand side of \eqref{w31ssg} gives
 $$ \sum_{t=1}^{r_3} \left( \sum_{h=1}^{r_2} z_{ht} \langle w^{(2)}_1(e_1 \wedge e_2 \otimes f_h),  \gamma_1 \rangle \right) \gamma_t + \sum_{j=1}^4 (-1)^{j} \left(  \sum_{h=1}^{r_2} \langle w^{(2)}_1(e_1 \wedge e_2 \otimes f_h),  \gamma_1 \rangle \cdot \langle e_{i_1}^.e_{i_2}^.e_{i_3}, \phi_h \rangle \right) s_j.  $$
 Hence, we have to prove that $\sum_{h=1}^{r_2} z_{ht} \langle w^{(2)}_1(e_1 \wedge e_2 \otimes f_h),  \gamma_1 \rangle = \delta_{1t}X_{12}$ (where $\delta_{ij}$ is the Kronecker delta) and $$\sum_{h=1}^{r_2} \langle w^{(2)}_1(e_{1} \wedge e_{2} \otimes f_h),  \gamma_1 \rangle \cdot \langle e_{i_1}^.e_{i_2}^.e_{i_3}, \phi_h \rangle = \delta_{1j}(x_{12}\chi_{21} -x_{22}\chi_{11}) -\delta_{1j}(x_{11}\chi_{21} -x_{21}\chi_{11}).$$
 By Remark \ref{thmweyman}, it is sufficient to check these relations over a split exact complex. This is done in equations %\eqref{relw31,1}-\eqref{relw31,2} 
 (4.2)-(4.3) in Lemma \ref{splitw3,1}. 
 \\
\bf Case 2: $s_1^.\gamma_1^. \gamma_2$. \rm \\
In this case we have $$ d_2(s_1^.\gamma_1^.\gamma_2) =  (x_{11}\chi_{21} -x_{21}\chi_{11})\gamma_2 - (x_{11}\chi_{22} -x_{21}\chi_{12})\gamma_1 + (\chi_{11}\chi_{22} -\chi_{21}\chi_{12})s_1. $$
As in the previous case we reduce to check that $\sum_{h=1}^{r_2} z_{ht} \langle w^{(2)}_{2,1}(\varepsilon \otimes e_1  \otimes f_h),  \gamma_1 \wedge \gamma_2 \rangle = \delta_{2t}(x_{11}\chi_{21} -x_{21}\chi_{11}) - \delta_{1t}(x_{11}\chi_{22} -x_{21}\chi_{12}),$ and $\sum_{h=1}^{r_2}  \langle w^{(2)}_{2,1}(\varepsilon \otimes e_1  \otimes f_h),  \gamma_1 \wedge \gamma_2 \rangle \cdot \langle e_{i_1}^.e_{i_2}^.e_{i_3}, \phi_h \rangle = \delta_{1j}(\chi_{11}\chi_{22} -\chi_{21}\chi_{12}).$ This is done in equations 
%\eqref{relw31,3}-\eqref{relw31,4} 
(4.4)-(4.5) in Lemma \ref{splitw3,1}. \\
\bf Case 3: $w^{(1)}_{1}(s_1 \wedge s_2 \wedge s_3 \wedge \gamma_1)$. \rm \\ 
Now we have to apply $d_3 \otimes 1_{D_0}$ to both sides of equation \eqref{w11sssg}.  It is sufficient to check only one component $u_j$ with $j=1,2$.
The formula for $w^{(1)}_1$, together with those proved in the previous cases and with Remark \ref{minimallinkage}, give
$$ (d_3 \otimes 1_{D_0})\langle w^{(1)}_{1}(s_1 \wedge s_2 \wedge s_3 \wedge \gamma_1), u_j^* \rangle = x_{j1} s_2^.s_3^.\gamma_1-x_{j2} s_1^.s_3^.\gamma_1 + x_{j3} s_1^.s_2^.\gamma_1 - \chi_{j1} s_1^.s_2^.s_3= $$
$$ = \sum_{h=1}^{r_2} \left[  \langle x_{j1}w^{(2)}_1(e_2 \wedge e_3 \otimes f_h) - x_{j2}w^{(2)}_1(e_1 \wedge e_3 \otimes f_h) + x_{j3}w^{(2)}_1(e_1 \wedge e_2 \otimes f_h),  \gamma_1 \rangle - \chi_{j1} y_{4h} \right] \phi_h.  $$
Applying $d_3 \otimes 1_{D_0}$, using that $d_3(\epsilon_k)= \sum_{h=1}^{r_2} y_{kh} \phi_h$, we
 have to show that the coefficient inside the brackets is equal to $  \sum_{k=5}^{r_1} y_{kh} \langle w^{(1)}_{1}(e_1 \wedge e_2 \wedge e_3 \wedge e_k),  \gamma_1 \otimes u_j^* \rangle$. This is done over a split exact complex in equation 
 %\eqref{relw11,1} 
 (4.6) in Lemma \ref{splitw1,1}. \\
\bf Case 4: $w^{(1)}_{1}(s_1 \wedge s_2 \wedge \gamma_1 \wedge \gamma_2)$. \rm \\
Similarly as in Case 3, we have 
$$ (d_3 \otimes 1_{D_0})\langle w^{(1)}_{1}(s_1 \wedge s_2 \wedge \gamma_1 \wedge \gamma_2), u_j^* \rangle = x_{j1} s_2^.\gamma_1^.\gamma_2-x_{j2} s_1^.\gamma_1^.\gamma_2 + \chi_{j1} s_1^.s_2^.\gamma_2 - \chi_{j2} s_1^.s_2^.\gamma_1= $$
$$ = \sum_{h=1}^{r_2} \left[  \langle x_{j1}\langle w^{(2)}_{2,1}(\varepsilon \otimes e_2  \otimes f_h),  \gamma_1 \wedge \gamma_2 \rangle - x_{j2} \langle w^{(2)}_{2,1}(\varepsilon \otimes e_1  \otimes f_h),  \gamma_1 \wedge \gamma_2 \rangle + \right. $$ 
$$ + \left. \chi_{j1} \langle w^{(2)}_1(e_1 \wedge e_2 \otimes f_h), \gamma_2 \rangle - \chi_{j2} \langle w^{(2)}_1(e_1 \wedge e_2 \otimes f_h),  \gamma_1 \rangle \right] \phi_h.  $$
We have to prove that the coefficient in the brackets has to be equal to $  \sum_{k=5}^{r_1} y_{kh} \langle w^{(1)}_{2,1}(\varepsilon \wedge e_k \otimes e_1 \wedge e_2),  \gamma_1 \wedge \gamma_2 \otimes u_j^* \rangle$. This is done in equation %\eqref{relw11,2} 
(4.7) in Lemma \ref{splitw1,1}.
\end{proof}

We now look at the maps in the second graded components of $W(d_3)$. These are necessary to compute the maps in the second graded component of $W(d_1)$. Denote by $\sigma$ the wedge product $s_1 \wedge \ldots \wedge s_4$. 

\begin{thm}
\label{w3,2link}
Some entries of the maps $w^{(3)}_{2,1}: \bigwedge^5 D_1 \otimes D_1 \to D_2 \otimes D_3$ and $w^{(3)}_{2,2}: \bigwedge^6 D_1 \to D_2 \otimes D_3$ are described as follows: 
\begin{equation}
\label{w3,2,5s}
w^{(3)}_{2,1}(\sigma \wedge \gamma_1 \otimes s_1) = \sum_{k=5}^{r_1} \sum_{h=1}^{r_2} \langle w^{(2)}_1(e_k \wedge e_1 \otimes f_h), \gamma_1 \rangle (\epsilon_k \otimes \phi_h).
\end{equation}
\begin{equation}
\label{w3,2,5g}
w^{(3)}_{2,1}(\sigma \wedge \gamma_1 \otimes \gamma_1) = \sum_{k=5}^{r_1} \sum_{h=1}^{r_2} \langle w^{(2)}_{2,2}(\varepsilon \wedge e_k \otimes f_h), \gamma_1 \otimes \gamma_1 \rangle (\epsilon_k \otimes \phi_h).
\end{equation}

\begin{equation}
\label{w3,2,6}
w^{(3)}_{2,2}(\sigma \wedge \gamma_1 \wedge \gamma_2) = \sum_{k=5}^{r_1} \sum_{h=1}^{r_2} \langle w^{(2)}_{2,1}(\varepsilon \otimes e_k \otimes f_h), \gamma_1 \wedge \gamma_2 \rangle (\epsilon_k \otimes \phi_h).
\end{equation}
The same formulas hold for all the analogous combinations of basis elements $\gamma_t$ and $s_j$.
\end{thm}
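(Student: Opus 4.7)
I would follow the same scheme as in the proof of Theorem \ref{w3,1link}. By construction, each map $w^{(3)}_{2,k}(\DD)$ is defined as a lift of $q^{(3)}_{2,k}(\DD)$ against the map
$$\nu: D_3 \otimes D_2 \longrightarrow S_2 D_2, \qquad a \otimes b \longmapsto d_3(a)\cdot b,$$
so to prove each of \eqref{w3,2,5s}, \eqref{w3,2,5g}, \eqref{w3,2,6} it is enough to apply $\nu$ to both sides and check that the resulting element of $S_2 D_2$ equals $q^{(3)}_{2,k}(\DD)$ evaluated on the given input.

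On the left-hand side, one expands $q^{(3)}_{2,k}(\DD)$ using \eqref{lift3,21}--\eqref{lift3,22}, substituting into each factor $w^{(3)}_1(\DD)$ either the formulas \eqref{w31ssg}--\eqref{w31sgg} from Theorem \ref{w3,1link} (for the types $s_i{}^.s_j{}^.\gamma_t$ and $s_i{}^.\gamma_t{}^.\gamma_u$) or the Buchsbaum--Rim values from Subsection 2.2 (for the remaining $s_i{}^.s_j{}^.s_k$ terms). On the right-hand side, Proposition \ref{notation}(1) together with Remark \ref{minimallinkage} gives $d_3(\epsilon_k) = \sum_{h=1}^{r_2} y_{kh}\phi_h$ for $k \geq 5$ after minimisation, so $\nu(\epsilon_k \otimes \phi_h) = \sum_{h'} y_{kh'}\phi_{h'}\phi_h$ in $S_2 D_2$. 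Matching coefficients of each symmetric monomial $\phi_{h'}\phi_h$ reduces each of the three identities to a $\prod_{i=1}^3 \GL(A_i)$-equivariant relation purely among the structure maps $w^{(2)}_1(\A)$, $w^{(2)}_{2,1}(\A)$, $w^{(2)}_{2,2}(\A)$ of $\A$ and the differentials $a_1, a_2, a_3$.

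By Remark \ref{thmweyman}, such relations can be checked over a generic split exact complex of format $(2,r_1,r_2,r_3)$ with generic liftings. I would defer the actual bookkeeping to a lemma in Section \ref{sec:splitex} parallel to Lemma \ref{splitw3,1} and Lemma \ref{splitw1,1}. The main technical obstacle is twofold: first, the lift defining $w^{(3)}_{2,k}$ is in general not unique (the kernel of $\nu$ is the image of $\bw^2 D_3 \to D_3 \otimes D_2$), so the formulas must be interpreted as yielding one consistent choice of lift, any discrepancy being absorbed in this intrinsic ambiguity; second, the combinatorics of expressing $q^{(3)}_{2,k}(\DD)$ in the basis of $S_2 D_2$ is considerably heavier than in Theorem \ref{w3,1link}, since each term is itself a product of two previously computed structure maps of $\DD$, and Pl\"ucker-type identities on minors of $a_1$ will be needed to collect the $y_{kh}$-weighted sums into the desired form.
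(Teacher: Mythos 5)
Your plan coincides with the paper's proof: the authors likewise expand $q^{(3)}_{2,k}(\DD)$ via \eqref{lift3,21}--\eqref{lift3,22} using Theorem \ref{w3,1link} and Remark \ref{minimallinkage}, apply the $d_3$-induced map $D_3\otimes D_2\to S_2D_2$ to the right-hand sides, match coefficients of each $\phi_h\cdot\phi_r$, and defer the resulting equivariant relations among the structure maps of $\A$ to Lemma \ref{splitw3,2} via Remark \ref{thmweyman}. The approach and the reduction are the same; only the deferred split-exact verification remains, exactly as in the paper.
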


\begin{proof}
Keeping the same notation and using the same method of the proof of Theorem \ref{w3,1link}, we divide the proof in three cases. 
In each of these cases we first use formulas \eqref{lift3,21} or \eqref{lift3,22} to compute the entries of the maps $ q^{(3)}_{2,1}$ and $ q^{(3)}_{2,2} $ corresponding to an arbitrary basis element $\phi_h \cdot \phi_r $ of $S_2F_2$. Then we apply the map $F_3 \otimes F_2 \to S_2F_2$ (induced by $d_3$) to the right-hand side of each of the above equations and look at the coefficient of $\phi_h \cdot \phi_r $ in the expression. In this way we identify relations among the structure maps of the complex $\A$. Such relations will be proved to hold true over a split exact complex in Section \ref{sec:splitex}. This theorem will follow then as consequence of Remark \ref{thmweyman}. \\
\bf Case 1: $w^{(3)}_{2,1}(\sigma \wedge \gamma_1 \otimes s_1)$. \rm \\
We have to compute $q^{(3)}_{2,1}(\sigma \wedge \gamma_1 \otimes s_1) = s_1^.s_2^.s_3 \otimes s_1^.s_4^.\gamma_1 - s_1^.s_2^.s_4 \otimes s_1^.s_3^.\gamma_1 + s_1^.s_3^.s_4 \otimes s_1^.s_2^.\gamma_1$. Recall that $s_{i_1}^.s_{i_2}^.s_{i_3} = (-1)^{i_1+i_2+i_3}t_{i_4}$.
By Theorem \ref{w3,1link} and Remark \ref{minimallinkage},
the coefficient of $\phi_h \cdot \phi_r $ is $$ \sum_{j=2}^4 y_{jh} \langle w^{(2)}_1(e_1 \wedge e_j \otimes f_r), \gamma_1 \rangle+ y_{jr} \langle w^{(2)}_1(e_1 \wedge e_j \otimes f_h), \gamma_1 \rangle. $$
By equation 
%\eqref{relw32,1} 
(4.8) in Lemma \ref{splitw3,2}, this is equal to $$ \sum_{k=5}^{r_1} y_{kr} \langle w^{(2)}_1(e_1 \wedge e_k \otimes f_h), \gamma_1 \rangle + y_{kh} \langle w^{(2)}_1(e_1 \wedge e_k \otimes f_r), \gamma_1 \rangle.  $$
\bf Case 2: $w^{(3)}_{2,1}(\sigma \wedge \gamma_1 \otimes \gamma_1)$. \rm \\
Now we have $q^{(3)}_{2,1}(\sigma \wedge \gamma_1 \otimes \gamma_1) = s_1^.s_2^.\gamma_1 \otimes s_3^.s_4^.\gamma_1 - s_1^.s_3^.\gamma_1 \otimes s_2^.s_4^.\gamma_1 + s_1^.s_4^.\gamma_1 \otimes s_2^.s_3^.\gamma_1$.
The coefficient of $\phi_h \cdot \phi_r $ is
$$  \sum_{1 \leq i < j \leq 4} (-1)^{i+j+1} \langle w^{(2)}_1(e_i \wedge e_j \otimes f_h), \gamma_1 \rangle \cdot \langle w^{(2)}_1(e_{\hat{i}} \wedge e_{\hat{j}} \otimes f_r), \gamma_1 \rangle. $$
By equation 
%\eqref{relw32,2} 
(4.9) in Lemma \ref{splitw3,2}, this is equal to $$ \sum_{k=5}^{r_1} y_{kr} \langle w^{(2)}_{2,2}(\varepsilon \wedge e_k \otimes f_h), \gamma_1 \otimes \gamma_1 \rangle + y_{kh} \langle w^{(2)}_{2,2}(\varepsilon \wedge e_k \otimes f_r), \gamma_1 \otimes \gamma_1 \rangle.  $$
\bf Case 3: $w^{(3)}_{2,2}(\sigma \wedge \gamma_1 \wedge \gamma_2)$. \rm \\ 
Consider the term $q^{(3)}_{2,2}(\sigma \wedge \gamma_1 \wedge \gamma_2) = \sum_{j=1}^4 t_j \otimes s_j^.\gamma_1^.\gamma_2 + \sum_{1 \leq i < j \leq 4} (-1)^{i+j+1} s_i^.s_j^.\gamma_1 \otimes s_{\hat{i}}^.s_{\hat{j}}^.\gamma_2. $ The coefficient of $\phi_h \cdot \phi_r $ is 
$$ \sum_{j=1}^4 y_{jh} \langle w^{(2)}_{2,1}(\varepsilon \otimes e_j  \otimes f_r),  \gamma_1 \wedge \gamma_2 \rangle - y_{jr} \langle w^{(2)}_{2,1}(\varepsilon \otimes e_j  \otimes f_h),  \gamma_1 \wedge \gamma_2 \rangle + $$ 
$$+ \sum_{1 \leq i < j \leq 4} (-1)^{i+j+1} \langle w^{(2)}_1(e_i \wedge e_j \otimes f_h), \gamma_1 \rangle \cdot \langle w^{(2)}_1(e_{\hat{i}} \wedge e_{\hat{j}} \otimes f_r), \gamma_2 \rangle+  $$
$$+ (-1)^{i+j} \langle w^{(2)}_1(e_i \wedge e_j \otimes f_r), \gamma_1 \rangle \cdot \langle w^{(2)}_1(e_{\hat{i}} \wedge e_{\hat{j}} \otimes f_h), \gamma_2 \rangle.  $$
By equation 
%\eqref{relw32,3} 
(4.10) in Lemma \ref{splitw3,2}, this is equal to $ \sum_{k=5}^{r_1} y_{kh} \langle w^{(2)}_{2,1}(\varepsilon \otimes e_k \otimes f_r), \gamma_1 \wedge \gamma_2 \rangle - y_{kr} \langle w^{(2)}_{2,1}(\varepsilon \otimes e_k \otimes f_h), \gamma_1 \wedge \gamma_2 \rangle$.
\end{proof}

Before proving the main theorem we still need to describe certain entries for the maps in the second graded component of $W(d_1)$. Again $\sigma$ denotes the wedge product $s_1 \wedge \ldots \wedge s_4$ and $\varepsilon$ denotes $e_1 \wedge \ldots \wedge e_4$.

\begin{thm}
\label{w1,2link}
Assume $\A$ to be of format $(2,6,5,1)$ and $\DD$ of format $(2,5,5,2)$, then some entries of the map $w^{(1)}_{2,1}: \bigwedge^5 D_1 \otimes \bw^2 D_1 \to D_0 \otimes \bw^2 D_3$ are described as follows: 
\begin{equation}
\label{w1,2,5}
w^{(1)}_{2,1}(\sigma \wedge \gamma_1 \otimes s_1 \wedge \gamma_1) = \sum_{j=1}^{2} \langle w^{(1)}_{2,2}(\varepsilon \wedge e_5 \wedge e_6 \otimes e_1), u_j^{*} \otimes \gamma_1 \otimes \gamma_1 \rangle (u_j \otimes \epsilon_5 \wedge \epsilon_6).
\end{equation}
Assume $\A$ to be of format $(2,5,5,2)$ and $\DD$ of format $(2,6,5,1)$, then some entries of the map $w^{(1)}_{2,2}: \bigwedge^6 D_1 \otimes D_1 \to D_0 \otimes S_2D_3$ are described as follows: 
\begin{equation}
\label{w1,2,6}
w^{(1)}_{2,2}(\sigma \wedge \gamma_1 \wedge \gamma_2 \otimes \gamma_1) = \sum_{j=1}^{2} \langle w^{(1)}_{3}(\varepsilon \wedge e_5 \otimes \varepsilon \wedge e_5), u_j^{*} \otimes \gamma_1 \wedge \gamma_2 \otimes \gamma_1 \rangle (u_j \otimes \epsilon_5 \cdot \epsilon_5).
\end{equation}

The same formulas hold for all the analogous combinations of basis elements $\gamma_t$ and $s_j$.
\end{thm}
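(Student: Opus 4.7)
The plan is to follow the strategy used in the proofs of Theorems \ref{w3,1link} and \ref{w3,2link}. For each of the two equations, the map on the left-hand side is characterized by its defining lifting property; we apply the relevant lifting differential to both sides, expand using formulas \eqref{lift1,21} or \eqref{lift1,22}, and reduce the claimed equality to a $\prod_{i} \GL(F_i)$-equivariant identity among structure maps of $\A$, which by Remark \ref{thmweyman} may be checked on a split exact complex.

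For equation \eqref{w1,2,5}, where $\DD$ has format $(2,5,5,2)$, the map $w^{(1)}_{2,1}(\DD)$ is determined by the condition that its composition with the differential $\bigwedge^2 D_3 \otimes D_0 \to D_3 \otimes D_2 \otimes D_0$ (induced by $d_3$) equals $q^{(1)}_{2,1}(\DD)(\sigma \wedge \gamma_1 \otimes s_1 \wedge \gamma_1)$. By formula \eqref{lift1,21} this term expands into contributions involving $w^{(3)}_1(\DD)$, $w^{(1)}_1(\DD)$ and $w^{(3)}_{2,1}(\DD)$, each of which has already been expressed in terms of structure maps of $\A$ via Theorems \ref{w3,1link} and \ref{w3,2link}. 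On the other hand, applying $d_3 \otimes \mathrm{id}_{D_0}$ to the right-hand side of \eqref{w1,2,5} and using $d_3(\epsilon_k) = \sum_{h} y_{kh}\phi_h$ from Remark \ref{minimallinkage} produces an expression involving $w^{(1)}_{2,2}(\A)$ paired with the $y_{kh}$. Equating coefficients of an arbitrary $\phi_h \otimes \gamma_t \otimes u_j$ reduces the statement to a relation among $w^{(1)}_{2,2}(\A)$, $w^{(2)}_1(\A)$, $w^{(2)}_{2,1}(\A)$, $w^{(1)}_1(\A)$ and the linkage data of Proposition \ref{notation}, to be checked in Section \ref{sec:splitex}.

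For equation \eqref{w1,2,6}, the argument is analogous, with $\DD$ of format $(2,6,5,1)$. The map $w^{(1)}_{2,2}(\DD)$ is characterized by lifting along $S_2 D_3 \otimes D_0 \to D_3 \otimes D_2 \otimes D_0$, and we expand $q^{(1)}_{2,2}(\DD)(\sigma \wedge \gamma_1 \wedge \gamma_2 \otimes \gamma_1)$ via formula \eqref{lift1,22}. Applying $d_3 \otimes \mathrm{id}_{D_0}$ to the right-hand side unfolds $w^{(1)}_3(\A)$ using its own defining formula \eqref{lift1,3}, which pairs $w^{(1)}_1(\A)$ with $w^{(3)}_{2,1}(\A)$. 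Matching coefficients yields again a structure-map identity on $\A$ of format $(2,5,5,2)$, to be verified on a split exact complex in Section \ref{sec:splitex}.

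The principal obstacle is bookkeeping: each $q^{(1)}_{2,k}(\DD)$ produces several terms, each of which expands into a double sum once Theorems \ref{w3,1link} and \ref{w3,2link} are invoked, and signs must be carefully tracked through the linkage isomorphisms of Proposition \ref{notation} (in particular through the identifications $s_j^* \leftrightarrow (-1)^j t_j$ and $\omega_j \leftrightarrow u_j^*$). The non-trivial content of the matching — reflecting a genuine relation among the critical representations of $\A$, not a formal rewriting — is precisely what must be verified on the split exact complex, where generic liftings and defect variables make the computation explicit; this verification is the purpose of the lemmas in Section \ref{sec:splitex} dealing with $w^{(1)}_{2,2}$ and $w^{(1)}_3$.
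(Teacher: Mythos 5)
Your proposal follows essentially the same route as the paper: expand $q^{(1)}_{2,1}(\DD)$ and $q^{(1)}_{2,2}(\DD)$ via formulas \eqref{lift1,21} and \eqref{lift1,22}, rewrite all terms through Theorems \ref{w3,1link} and \ref{w3,2link}, match coefficients of $u_j\otimes\phi_h\otimes\epsilon_k$ against the right-hand side, and discharge the resulting identity on a split exact complex via Remark \ref{thmweyman} (Lemma \ref{splitw1,2}). One minor imprecision: in the second case the defining formula \eqref{lift1,3} for $w^{(1)}_3$ is not unfolded in the linkage argument itself---$w^{(1)}_3(\A)$ enters only as a scalar coefficient multiplied by $y_{5h}$---its formula is used only when computing $v^{(1)}_3$ over the split exact complex.
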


\begin{proof}
We keep the same notation and use the same method of the proofs of Theorems \ref{w3,1link} and \ref{w3,2link}. \\
\bf Case 1: $w^{(1)}_{2,1}(\sigma \wedge \gamma_1 \otimes s_1 \wedge \gamma_1)$. \rm \\
By the formula \eqref{lift1,21}, the term $q^{(1)}_{2,1}(\sigma \wedge \gamma_1 \otimes s_1 \wedge \gamma_1) \in F_0 \otimes F_2 \otimes F_3$ is equal to 
$$ s_1^.s_2^.\gamma_1 \otimes w^{(1)}_1(s_1 \wedge s_3 \wedge s_4 \wedge \gamma_1) - s_1^.s_3^.\gamma_1 \otimes w^{(1)}_1(s_1 \wedge s_2 \wedge s_4 \wedge \gamma_1) + s_1^.s_4^.\gamma_1 \otimes w^{(1)}_1(s_1 \wedge s_2 \wedge s_3 \wedge \gamma_1) +   $$ 
$$ + d_1(s_1) w^{(3)}_{2,1}(\sigma \wedge \gamma_1 \otimes \gamma_1) -  d_1(\gamma_1) w^{(3)}_{2,1}(\sigma \wedge \gamma_1 \otimes s_1).  $$
We look at the coefficient of the basis element $u_j \otimes f_h \otimes \epsilon_k $ with $k=5,6$. Using Theorems \ref{w3,1link} and \ref{w3,2link}, we obtain that this coefficient is 
$$  \langle w^{(2)}_1(e_1 \wedge e_2 \otimes f_h),  \gamma_1 \rangle \cdot \langle w^{(1)}_{1}(e_1 \wedge e_3 \wedge e_4 \wedge e_k),  \gamma_1 \otimes u_j^* \rangle + $$  $$ - \langle w^{(2)}_1(e_1 \wedge e_3 \otimes f_h),  \gamma_1 \rangle \cdot \langle w^{(1)}_{1}(e_1 \wedge e_2 \wedge e_4 \wedge e_k),  \gamma_1 \otimes u_j^* \rangle   +  $$
$$ + \langle w^{(2)}_1(e_1 \wedge e_4 \otimes f_h),  \gamma_1 \rangle \cdot \langle w^{(1)}_{1}(e_1 \wedge e_2 \wedge e_3 \wedge e_k),  \gamma_1 \otimes u_j^* \rangle +    $$
$$ x_{j1}\langle w^{(2)}_{2,2}(\varepsilon \wedge e_k \otimes f_h), \gamma_1 \otimes \gamma_1 \rangle  - \chi_{j1}\langle w^{(2)}_1(e_k \wedge e_1 \otimes f_h), \gamma_1 \rangle.  $$
By equation 
%\eqref{relw12,1} 
(4.11) in Lemma \ref{splitw1,2}, this coefficient is equal to $  y_{ih} \langle w^{(1)}_{2,2}(\varepsilon \wedge e_5 \wedge e_6 \otimes e_1), u_j^{*} \otimes \gamma_1 \otimes \gamma_1 \rangle $ where $i \in \lbrace 5,6 \rbrace \setminus \lbrace k \rbrace$. \\
\bf Case 2: $w^{(1)}_{2,2}(\sigma \wedge \gamma_1 \wedge \gamma_2 \otimes \gamma_1)$. \rm \\
Let $i_1, i_2, i_3, i_4$ be distinct choices of the indices $1,2,3,4$.  We use the formula \eqref{lift1,22} to obtain $$ q^{(1)}_{2,2}(\sigma \wedge \gamma_1 \wedge \gamma_2 \otimes \gamma_1)= \sum_{i_1, i_2} (-1)^{i_1+i_2} s_{i_1}^.s_{i_2}^.\gamma_1 \otimes w^{(1)}_1(s_{i_3} \wedge s_{i_4} \wedge \gamma_1 \wedge \gamma_2) +      $$
$$ +\sum_{i_1} (-1)^{i_1}  s_{i_1}^.\gamma_1^.\gamma_2 \otimes w^{(1)}_1(s_{i_2} \wedge s_{i_3} \wedge s_{i_4} \wedge \gamma_1) + d_1(\gamma_1)w^{(3)}_{2,2}(\sigma \wedge \gamma_1 \wedge \gamma_2).    $$
The coefficient of the basis element $u_j \otimes f_h \otimes \epsilon_5 $ is 
$$ \sum_{i_1, i_2} (-1)^{i_1+i_2} \langle w^{(2)}_1(e_{i_1} \wedge e_{i_2} \otimes f_h),  \gamma_1 \rangle \cdot \langle w^{(1)}_{2,1}(\varepsilon \otimes e_5 \otimes e_{i_3} \wedge e_{i_4}),  \gamma_1 \wedge \gamma_2 \otimes u_j^* \rangle +      $$
$$ +\sum_{i_1} (-1)^{i_1}  \langle w^{(2)}_{2,1}(\varepsilon \otimes e_{i_1}  \otimes f_h),  \gamma_1 \wedge \gamma_2 \rangle \cdot \langle w^{(1)}_{1}(e_{i_2} \wedge e_{i_3} \wedge e_{i_4} \wedge e_5),  \gamma_1 \otimes u_j^* \rangle + $$ 
$$ -\chi_{j1}  \langle w^{(2)}_{2,1}(\varepsilon \otimes e_5 \otimes f_h), \gamma_1 \wedge \gamma_2 \rangle.    $$
By equation 
%\eqref{relw12,2} 
(4.12) in Lemma \ref{splitw1,2}, this coefficient is equal to $ y_{5h} \langle w^{(1)}_{3}(\varepsilon \wedge e_5 \otimes \varepsilon \wedge e_5), u_j^{*} \otimes \gamma_1 \wedge \gamma_2 \otimes \gamma_1 \rangle.   $ 
\end{proof}

We are now ready to characterize when a perfect module with resolution of format $(2,6,5,1)$ or $(2,5,5,2)$ is in the BR-linkage class of a Buchsbaum--Rim complex.

\begin{thm}
\label{BRliccimain}
Let $R$ be a local Gorenstein ring with infinite residue field containing $2$.
Let $M$ be a perfect $R$-module having minimal free resolution $\A$ of format $(2,6,5,1)$ or $(2,5,5,2)$. The following conditions are equivalent:
\begin{enumerate}
\item[(1)] $M$ is in the BR-linkage class of a (not necessarily minimal) Buchsbaum--Rim complex of format $(2,4,4,2)$.
\item[(2)] At least one map $w^{(1)}_{j,k}(\A)$ is nonzero modulo the maximal ideal of $R$.
\end{enumerate}
\end{thm}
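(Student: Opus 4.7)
My plan is to combine the explicit formulas of Theorems \ref{w3,1link} and \ref{w1,2link} with Proposition \ref{notation} and the Buchsbaum--Rim computations of Subsection 2.2, treating the two implications separately.

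For $(1)\Rightarrow(2)$, I would argue by downward induction along the linkage chain $M = M_0 \sim M_1 \sim \cdots \sim M_n$. The base case uses that $M_n$ is resolved by a $(2,4,4,2)$ BR-complex, for which Subsection 2.2 gives $w^{(1)}_1$ invertible, hence nonzero modulo $\m$. For the inductive step, each formula in Theorems \ref{w3,1link} and \ref{w1,2link} expresses a coefficient of $w^{(1)}_{j,k}(\DD)$ as a combination of coefficients of some $w^{(1)}_{j',k'}(\A)$, plus terms involving entries of the differentials of $\A$, which lie in $\m$ by minimality. Reducing modulo $\m$ kills the differential contributions, so nonvanishing of some $w^{(1)}(\DD)$ mod $\m$ forces nonvanishing of some $w^{(1)}(\A)$ mod $\m$. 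Iterating $n$ times yields the claim for $M$.

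For $(2)\Rightarrow(1)$, I plan to construct the linkage chain in two stages. First, I descend through the graded components of $W(d_1)$ using the formulas of Theorems \ref{w1,2link} and \ref{w3,1link}: equation \eqref{w1,2,6} sends $w^{(1)}_3(\A) \not\equiv 0 \pmod{\m}$ on format $(2,5,5,2)$ to $w^{(1)}_{2,2}(\DD) \not\equiv 0$ on $(2,6,5,1)$; equation \eqref{w1,2,5} sends $w^{(1)}_{2,2}$ on $(2,6,5,1)$ to $w^{(1)}_{2,1}$ on $(2,5,5,2)$; and equation \eqref{w11ssgg} sends $w^{(1)}_{2,1}$ on $(2,5,5,2)$ to $w^{(1)}_1$ on $(2,6,5,1)$. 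Hence after at most three BR-linkages I reach a complex with $w^{(1)}_1 \not\equiv 0 \pmod{\m}$. Then Proposition \ref{notation}(3) shows $\beta_3$ has a unit entry, so after one more BR-linkage $d_1(\DD)$ has a unit entry; an additional minimization reduces $D_0$ from $R^2$ to $R$, producing a cyclic module with minimal resolution of Dynkin $D_5$ format $(1,4,5,2)$ or $(1,5,5,1)$. By \cite{CVWdynkin} these ideals are licci, and by Lemma \ref{cyclic} the corresponding chain of classical linkages is realized as BR-linkages of non-minimal two-row extensions, terminating at a complete intersection whose Koszul resolution $(1,3,3,1)$ extends non-minimally to a $(2,4,4,2)$ BR-complex.

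The main obstacle I anticipate is in the descent of Stage 1: Theorems \ref{w1,2link} and \ref{w3,1link} describe only specific coefficients of the relevant $w^{(1)}$ maps, so one must invoke a general position argument on the choice of the four columns of $a_1$ defining $b_1$ to ensure the coefficient appearing in each formula is the nonzero one. A secondary technical point is the transition to the cyclic regime: once the format becomes $(1,\ldots)$, we leave the module-format $(2,\ldots)$ setting, and Lemma \ref{cyclic} must be applied carefully to convert each classical link back into a BR-linkage of a two-row non-minimal resolution so that Definition \ref{defBRlicci} remains literally satisfied.
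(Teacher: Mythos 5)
Your proposal follows essentially the same route as the paper: the descent $w^{(1)}_3 \to w^{(1)}_{2,2} \to w^{(1)}_{2,1} \to w^{(1)}_1$ via Theorems \ref{w1,2link} and \ref{w3,1link} with the general position argument you flag, the reduction to a cyclic module via Proposition \ref{notation} and Lemma \ref{cyclic}, and (for the forward implication, which the paper states as a brief contrapositive rather than your downward induction) the observation that vanishing of all $W(d_1)$ structure maps modulo $\m$ propagates under BR-linkage while the Buchsbaum--Rim complex has invertible $w^{(1)}_1$. One small correction: the licci property of the resulting height-3 ideals of format $(1,5,5,1)$ or $(1,4,5,2)$ rests on the classical fact that Gorenstein and almost complete intersection ideals of height 3 link to a complete intersection in at most two steps, not on \cite{CVWdynkin}, which only conjectures the licci property for general Dynkin formats.
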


\begin{proof}
Call $\m$ the maximal ideal of $R$.
First suppose that $w^{(1)}_1(\A)$ is nonzero modulo $\m$. Working exactly as in Corollary \ref{cor2541}, we can link to a complex $\DD$ such that 
$$ d_1= \bmatrix 1 & 0 & \ldots & 0 \\
0 & x_1 & \ldots & x_n \endbmatrix,  $$ 
and the ideal $I=(x_1, \ldots, x_n)$ is a perfect ideal of height 3. If $\A$ is of format $(2,5,5,2)$ then $n=5$ and the rank of $D_3$ is $1$, while if $\A$ is of format $(2,6,5,1)$, then $n=4$ and the rank of $D_3$ is at most $2$. It follows that $I$ is either Gorenstein or almost complete intersection (and its free resolution has format $(1,5,5,1)$, $(1,4,5,2)$ or $(1,3,3,1)$). Therefore $I$ can be linked to a complete intersection in at most 2 steps. Using Lemma \ref{cyclic}, we conclude as in Corollary \ref{cor2541}, obtaining that $\A$ is in the BR-linkage class of a Buchsbaum--Rim complex.

Now, let us assume that $w^{(1)}_1(\A)= 0$ modulo $\m$, but some other map $w^{(1)}_{j,k}(\A)$ with $j=2,3$ is nonzero modulo $\m$. Hence, by Proposition \ref{notation} any direct minimal BR-link of $M$ is not a cyclic module. If there exists some BR-linked module $M'$ with lower total Betti number than $M$, then the minimal free resolution $\DD$ of $M'$ has either format $(2,4,4,2)$ or $(2,5,4,1)$ (the rank of $D_1$ cannot be less than 4 since $M'$ is perfect of projective dimension 3). In both cases it is in the BR-linkage class of a Buchsbaum--Rim complex (see Corollary \ref{cor2541}).

Suppose that $\A$ is of format $(2,6,5,1)$ and any minimally BR-linked complex has format $(2,5,5,2)$. Then the third graded component of $W(a_1)$ is zero and necessarily the map $w^{(1)}_{2,2}(\A)$ is nonzero modulo $\m$. Using a standard general position argument we can assume that $\langle w^{(1)}_{2,2}(e_1 \wedge \ldots \wedge e_6 \otimes e_1), u_1^{*} \otimes \gamma_1 \otimes \gamma_1 \rangle = 1$. Linking with respect to the choice of the first four columns of $a_1$, by equation \eqref{w1,2,5} in Theorem \ref{w1,2link} we obtain that the map $  w^{(1)}_{2,1}(\DD)$ is nonzero modulo $\m$. Replacing $\A$ by $\DD$ we can pass to the case where $\A$ has format $(2,5,5,2)$ and any minimally BR-linked complex has format $(2,6,5,1)$. 

Now, if the map $w^{(1)}_{2,1}(\A)$ is nonzero modulo $\m$, using the same argument and equation \eqref{w11ssgg} in Theorem \ref{w3,1link}, we get that the map $w^{(1)}_{1}(\DD)$ is nonzero modulo $\m$ and we conclude applying the first part of this proof. If $w^{(1)}_{3}(\A)$ is nonzero modulo $\m$, we use equation \eqref{w1,2,6} in Theorem \ref{w1,2link} to get that also $w^{(1)}_{2,2}(\DD)$ is nonzero modulo $\m$. In this way we can replace $\A$ by $\DD$ and continue as in the previous case.

Finally, it is clear by Theorems \ref{w3,1link} and \ref{w1,2link} that if all the maps in $W(d_1)$ are zero modulo $\m$ for the complex $\A$, then the same happens for any BR-linked complex. Since the map $w^{(1)}_1$ of a Buchsbaum--Rim complex of format $(2,4,4,2)$ has unit coefficients, in this case we cannot have $\A$ in the BR-linkage class of a Buchsbaum--Rim complex.
\end{proof}

In the case of ideals it was conjectured in \cite{CVWdynkin} that, if $I$ is a perfect ideal of height 3 in a Gorenstein local ring $R$ admitting minimal free resolution of Dynkin format, then $I$ is licci. Furthermore, it is conjectured in \cite{SW21} that such perfect ideals of Dynkin type can be obtained as specialization of defining ideals of Schubert varieties, and that these ideals are the generic perfect ideals for such formats. Also it turns out that for most ideals of Dynkin type the highest graded structure maps of the critical representations of their free resolution $\FF$, computed with generic liftings by adding defect variables, define the differentials of a new complex, which refer to as $\FF^\bullet_{top}$. A positive answer to the conjecture in \cite{SW21} would imply that the complex $\FF^\bullet_{top}$ is split exact (up to a change of basis in the defect variables). Hence the highest graded structure maps in $W(d_1)$, $W(d_2)$, $W(d_3)$ should all be nonzero modulo the maximal ideal $\mathfrak{m}$ of $R$.
In \cite{GNW} the authors show that an ideal admitting minimal free resolution of format $(1,5,6,2)$ is licci if and only if some higher structure map in $W(d_2), W(d_3)$ is nonzero modulo $\frak{m}$ and give further evidence in support of the conjecture in \cite{CVWdynkin}.
They conjecture that a perfect ideal of height 3 in a Gorenstein local ring $R$ with infinite residue field is licci if and only if there exists some structure map in $W(d_1)$, which is nonzero modulo the maximal ideal $\frak{m}$ of $R$. Moreover, this should be equivalent to the statement that, for any linked ideal there exists some structure map in $W(d_i)$, with $i= 1,2,3$, which is nonzero modulo $\mathfrak{m}$.
For this reason it seems natural to expect that a similar pattern holds in the case of modules. In light of Theorem \ref{BRliccimain}, %which shows that being in BR-linkage class of a (not necessarily minimal) BR complex is equivalent for some map in $W(d_1)$ to be nonzero modulo $\mathfrak{m}$, 
we expect that:
\begin{conjecture}
\label{conj1} \rm
Every perfect module having free resolution of format $(2,6,5,1)$ or $(2,5,5,2)$ is the BR-linkage class of a Buchsbaum--Rim complex.
\end{conjecture}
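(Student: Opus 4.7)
The plan is to leverage Theorem \ref{BRliccimain} to reduce the conjecture to a non-vanishing statement: it suffices to show that for every perfect $R$-module $M$ with minimal free resolution $\A$ of format $(2,6,5,1)$ or $(2,5,5,2)$, at least one of the maps $w^{(1)}_{j,k}(\A)$ in the critical representation $W(d_1)$ is nonzero modulo $\m$. This reduction mirrors the strategy used in \cite{GNW} for cyclic modules of format $(1,5,6,2)$, and exchanges the original question for a purely representation-theoretic non-vanishing problem.

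First I would pass to the generic situation. By the universal property of the generic ring $\hat R_{gen}$ from \cite{W18}, any perfect module $M$ of the given format is obtained by specialization from a generic module defined over $\hat R_{gen}$. Because both formats are Dynkin, $\hat R_{gen}$ is Noetherian and the associated Kac--Moody Lie algebra is finite-dimensional, so the graded pieces of $W(d_1)$ are finite-dimensional and explicitly known from \cite{LW19}. The goal would then be to verify that, after localization at the maximal ideal of $\hat R_{gen}$, the image of $W(d_1)$ is not contained in the maximal ideal; by base change this implies the non-vanishing for every specialization $M$.

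To actually establish the non-vanishing, I would exploit the format-swapping phenomenon made explicit in Theorems \ref{w3,1link}, \ref{w3,2link}, and \ref{w1,2link}: a minimal BR-link takes $(2,6,5,1)$ to $(2,5,5,2)$ and vice versa, and each such link transfers a nonzero structure map of $W(d_1)$ on one side into a nonzero structure map of $W(d_1)$ on the other side (or else strictly reduces the total Betti number, landing in one of the formats $(2,4,4,2)$ or $(2,5,4,1)$ already handled by Corollary \ref{cor2541}). Combined with the total Betti number being a non-negative integer, one would try to show that no module can admit an infinite alternating chain of BR-links all of which preserve the total Betti number while having $W(d_1)$ identically zero modulo $\m$; this forces at least one structure map in $W(d_1)$ to be nonzero modulo $\m$ for the original $\A$.

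The main obstacle is the non-vanishing input itself: ruling out that all components of $W(d_1)$ specialize into $\m\hat R_{gen}$ is of the same flavor as the licci conjecture of \cite{CVWdynkin} for ideals of Dynkin type, which remains open. A realistic intermediate target is to verify the statement for modules in a Zariski-dense open subset of the relevant parameter space, using the Schubert-variety interpretation proposed in \cite{SW21} together with explicit computer algebra in the generic ring. Extending this ``generic'' non-vanishing to every perfect module would require an upper-semicontinuity argument in conjunction with the rigidity of Dynkin formats, and it is precisely this last step where I expect the conceptual difficulty to lie.
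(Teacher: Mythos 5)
The statement you are asked to prove is stated in the paper as Conjecture \ref{conj1}; the paper offers no proof of it, and your proposal does not close the gap either. Your first step --- invoking Theorem \ref{BRliccimain} to reduce the conjecture to showing that some map $w^{(1)}_{j,k}(\A)$ is nonzero modulo $\m$ --- is exactly the reduction the paper itself has in mind when formulating the conjecture, so that part is sound. But the two mechanisms you propose for establishing the non-vanishing both fail. First, the passage through $\hat R_{gen}$: even if one verified that the components of $W(d_1)$ are units in the localized generic ring, this does not descend to an arbitrary specialization, because the specialization homomorphism $\hat R_{gen}\to R$ need not be local; a unit entry can specialize into $\m$. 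This is precisely why the analogous licci conjecture of \cite{CVWdynkin} is open despite the generic ideals being well understood.

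Second, the chain argument in your third paragraph is circular. The paper's own proof of Theorem \ref{BRliccimain} observes (last paragraph) that if \emph{all} maps in $W(d_1)$ vanish modulo $\m$ for $\A$, then the same holds for every BR-linked complex; the total Betti number is preserved and the formats simply alternate between $(2,6,5,1)$ and $(2,5,5,2)$ indefinitely. Nothing prevents such an infinite alternating chain, so no contradiction arises, and the non-vanishing for the original $\A$ cannot be forced this way. In short, your proposal correctly identifies the reduction and correctly identifies where the difficulty lies, but the statement remains a conjecture: what is missing is precisely the input that $W(d_1)$ cannot be identically zero modulo $\m$ for a perfect module of these formats, and neither the paper nor your argument supplies it.
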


\subsection{Examples}

To construct examples of perfect modules resolved by complexes of format $(2,6,5,1)$ or $(2,5,5,2)$ we start by observing that any perfect exact complex of format $(2,6,5,1)$ is the dual of a free resolution of an ideal having format $(1,5,6,2)$. Perfect ideals with such Betti numbers are considered in several papers including \cite{annebrown}, \cite{CKLW}, \cite{kustin}. 

We consider here as an easy example the ideal $I=(x^2, y^2, z^2, xy, xz)$ in the power series ring $R=K[[x,y,z]]$. Let $\A$ be the dual of the minimal free resolution of $I$.
The differentials of $\A$ are 
\footnotesize
$$a_3 =\bmatrix x^2   \\ xy  \\ y^2  \\ xz \\ z^2 \endbmatrix, \,
a_2=\bmatrix -y & x & 0 & 0 & 0 \\ 
 0 & -y &  x & 0 & 0 \\ 
 -z & 0 &  0 & x & 0 \\ 
 0 & -z & 0 & y & 0 \\
 0 & 0 & 0 & -z & x \\
 0 & 0 & -z^2 & 0 & y^2 \\
 \endbmatrix, \,
a_1=\bmatrix z & 0 & -y & x & 0 & 0 \\ 0 & z^2 & 0 & -yz & -y^2 & 0 \endbmatrix. $$ 
\normalsize
Computing the higher structure maps of $\A$, we observe that the entries of $w^{(3)}_1(\A)$ are all contained in the maximal ideal of $R$, while the only entry of $w^{(1)}_1(\A)$ not in the maximal ideal is $\langle w^{(1)}_1(e_1 \wedge e_3 \wedge e_4 \wedge e_6), \gamma \otimes u_1^*\rangle=1$.

Using the results in the previous subsection, the BR-linkage of $\A$ choosing $\alpha_1(s_1)=e_1$, $\alpha_1(s_2)=e_2+e_3$, $\alpha_1(s_3)=e_4+e_5$, $\alpha_1(s_4)=e_6$ produces a complex which is a free resolution of a non-cyclic module, confirming the fact that $\A$ is in the BR-linkage class of a (non-minimal) Buchsbaum--Rim complex.

Instead, the BR-linkage of $\A$ with respect to the choice $\alpha_1(s_1)=e_1$, $\alpha_1(s_2)=e_2+e_4$, $\alpha_1(s_3)=e_3+e_6$, $\alpha_1(s_4)=e_5$ produces a perfect complex $\DD$ of format $(2,5,5,2)$.
The differentials of $\DD$ are \footnotesize
$$d_3 =\bmatrix 0 & -z \\z-y & 0 \\ x & z^2 \\ -y & x \\ 0 & -y^2 \endbmatrix, 
d_2=\bmatrix x^2 & xy & y^2 & xz & z^2 \\ 
 yx & y^2 &  0 & y(z-y) & -x \\ 
 -y^2 & 0 &  0 & 0 & z \\ 
 x(z-y) & yz & 0 & z(z-y) & 0 \\
 -z(z-y) & x & y-z & 0 & 0 \\
 \endbmatrix, 
d_1=\bmatrix 0 & z & x & -y & 0 \\ y-z & 0 & z(z-y) & x & -y^2 \endbmatrix. $$
\normalsize
The automorfism of $R$ defined by sending $x \to x$, $y \to -y$, $z \to z-y$ makes the complex $\DD$ dual to itself.  \ec

\section{Computation over a split exact complex}\label{sec:splitex}

In this section we work out the necessary formulas for the higher structure maps over a split exact complex in order to finish the proofs of the results in Section \ref{sec:BRlinkage}.

\noindent
Let us work over a commutative ring $R$ containing $ \frac{1}{2}$. 
Consider the split exact complex 
\begin{equation}
\label{basecomplex}
\FF: 0 \longrightarrow F_3 \buildrel{d_3}\over\longrightarrow  F_2 \buildrel{d_2}\over\longrightarrow F_1 \buildrel{d_1}\over\longrightarrow F_0 \cong R^2 
\end{equation}
on the free $R$-modules $F_0$, $F_1$, $F_2$, $F_3$ having bases $ \lbrace u_1, u_2 \rbrace$, $ \lbrace e_1, \ldots, e_n \rbrace$, $ \lbrace f_1, \ldots, f_{r_2} \rbrace$, $ \lbrace g_1, \ldots, g_m \rbrace$. Denote the dual basis by $ \lbrace u_1^*, u_2^* \rbrace$, $ \lbrace \epsilon_1, \ldots, \epsilon_{n} \rbrace$, $ \lbrace \phi_1, \ldots, \phi_{r_2} \rbrace$, $ \lbrace \gamma_1, \ldots, \gamma_m \rbrace$ where $r_2=n+m-1$. We assume the format of $\FF$ to be either $(2,5,5,2)$ or $(2,6,5,1).$

The differentials of $\FF$ are defined by imposing $d_1(e_{n-1})= u_1$, $d_1(e_{n})= u_2$, $d_1(e_i)= 0$, $ d_2(f_i)= e_i $ for $i < n-1$, $d_2(f_i)= 0$ for $i \geq n-1$, $d_3(g_i)= f_{i+n-2}$.

Let us construct a polynomial ring over $R$ by adding new variables, called defect variables.
These new variables are of the form $ b_{ijk}^u$ defined for any $1 \leq i, j, k \leq n$, $1 \leq u \leq m$ and satisfying the usual skew-symmetric relations in the indices $i,j,k$.

 Similarly, if the format of $\FF$ is $(2,5,5,2)$, we add also variables of the form $c_{i}^{ut}$ defined for any $1 \leq i \leq n$ and $1 \leq u,t \leq 2$, with the convention $c_i^{uu}=0$.
 These indeterminates are used to compute the maps $w^{(3)}_1$, $w^{(3)}_{2,1}$ in a generic way, expressing all possible liftings. 

From now on we denote by $v^{(i)}_{j,k}$ the map obtained over the complex $\FF$ by computing the corresponding $w^{(i)}_{j,k}$ with a generic lifting. As an example, the entry $e_1^.e_2^.e_3$ in $\FF$ can be chosen to be equal to $0 + \beta$, where $\beta$ is any element of the kernel of $d_2$. Since the kernel of $d_2$ is equal to the image of $d_3$, we set generically $e_1^.e_2^.e_3 = 0+ \sum_{u=1}^m b_{123}^u d_3(g_u)$. We do this similarly for all the other entries.

We describe some of the maps $v^{(i)}_j$ of the complex $\FF$ in order to check the relations appearing in the proofs of the theorems in Section 3. 
We list only some of the entries. By permutation of the indices with the usual sign rules one can obtain all the possible entries.
 As usual $\langle \cdot, \cdot \rangle$ is the evaluation map
and $\delta_{ij}$ denotes the Kronecker delta. 

Denote by $\varepsilon_{i_1,\ldots, i_r}$ the wedge product $e_{i_1}\wedge \ldots \wedge e_{i_r}.$
All the entries for these maps are computed using the formulas in Section 2.1.
For the maps in the first graded components we get
$$ \langle e_i^.e_j^.e_k, \phi_h  \rangle=  \left\{ \begin{array}{ccc} b^{h-n+2}_{ijk} &\mbox{if } h \geq n-1, \\
       \delta_{hi} &\mbox{ if } i< j = n-1, k=n, \\
       0  &\mbox{ otherwise. } \\
    \end{array}\right.  $$  $$     
 \langle v^{(2)}_1(e_i \wedge e_j \otimes f_h), \gamma_t  \rangle= \left\{ \begin{array}{ccc} -b^t_{ijh} &\mbox{if } h < n-1, \\
       \delta_{h-n+2,t} &\mbox{ if } h \geq i = n-1, j=n \\
       0  &\mbox{ otherwise. } \\
    \end{array}\right. $$
$$ \langle v^{(1)}_1(\varepsilon_{i_1,\ldots, i_4}), \gamma_t \otimes u_j^* \rangle =  \sum_{k=1}^4  (-1)^{k} \delta_{i_k,j+n-2} b^t_{k_1k_2k_3} \mbox{ where } \lbrace k_1, k_2, k_3 \rbrace = \lbrace i_1, i_2, i_3, i_4 \rbrace \setminus \lbrace i_k \rbrace. $$
 Let us look now at the maps in the second graded components.  
 For the format $(2,5,5,2)$, set 
 $$ B_{i_1i_2i_3,j_1j_2j_3}^{ut}:= b_{i_1i_2i_3}^u b_{j_1j_2j_3}^t -b_{i_1i_2i_3}^t b_{j_1j_2j_3}^u  $$
 and $$ P_{1}^{ut}= \frac{1}{2}[b_{123}^u b_{145}^t -b_{124}^u b_{135}^t +b_{134}^ub_{125}^t + b_{123}^t b_{145}^u -b_{124}^t b_{135}^u +b_{134}^t b_{125}^u] + (-1)^{t+1}c_1^{ut}.  $$ Define $P_{i}^{ut}$ analogously for the other indices.
 Then:
  $$ \langle v^{(3)}_{2,1}(\varepsilon_{1, \dots, n} \otimes e_i), \phi_h \otimes \gamma_u \rangle = \left\{ \begin{array}{ccc} P_i^{h-3,u}  &\mbox{if } h = 4,5; 
 \\      (-1)^{h+1} b_{ijk} \mbox{ with } j,k \in \lbrace 1,2,3 \rbrace \setminus \lbrace h \rbrace &\mbox{ if } h \leq 3, i >3; \\
 (-1)^{h+1} b_{123} &\mbox{ if } i,h \leq 3.
    \end{array}\right.
 $$

    $$ \langle v^{(2)}_{2,1}(\varepsilon_{i_1,\ldots, i_4} \otimes e_{i_5} \otimes f_h), \gamma_1 \wedge \gamma_2 \rangle = $$ $$ = \left\{ \begin{array}{cccc} \frac{1}{2}[B^{12}_{k_1i_5h,k_2k_3h}-B^{12}_{k_2i_5h,k_1k_3h}+B^{12}_{k_3i_5h,k_1k_2h}] &\mbox{if } h \leq 3; h \neq i_5; k_1, k_2, k_3 \neq h, i_5 ; \\
    \frac{1}{2}[B^{12}_{i_1i_2i_3,i_1i_4h}-B^{12}_{i_1i_2i_4,i_1i_3h}+B^{12}_{i_1i_3i_4,i_1i_2h}] +\delta_{h,i_5}c_{i_1} &\mbox{if } h \leq 3, i_5=i_1;
 \\  (-1)^{h+i_1-1}b^{6-h}_{i_1i_2i_5}     &\mbox{ if } i_3, i_4, h \in \lbrace 4,5 \rbrace; \\
 0 &\mbox{ otherwise. } 
    \end{array}\right.
 $$
 For the format $(2,6,5,1)$, we set $ P_{1, \hat{6}}= b_{123}b_{145}-b_{124}b_{135}+b_{134}b_{125},  $ using the convention $b_{ijk}:=b_{ijk}^1$, and define $P_{i, \hat{j}}$ analogously by permuting the indices. Also, set $$P= \sum_{1 \leq i < j}^{5} (-1)^{i+j+1} b_{ij6}b_{\hat{i}\hat{j}\hat{6}}.$$ 
 Then: 
 $$ \langle v^{(3)}_{2,2}(\varepsilon_{1, \ldots, 6}), \phi_h \otimes \gamma_1 \rangle = \left\{ \begin{array}{cc} \frac{1}{2}P &\mbox{if } h = 5; 
 \\      (-1)^{h+1} b_{\hat{h}\hat{5}\hat{6}} &\mbox{ if } h \leq 4.
    \end{array}\right.
 $$

The next series of lemmas describes relations over the split exact complex $\FF$ involving some of the maps $v^{(i)}_j$.
The first two provide the relations needed to complete the proof of Theorem \ref{w3,1link}.

In the following we denote the entries of $d_1$, $d_2$, $d_3$ respectively by $x_{ij}$, $y_{ij}$, $z_{ij}$ and the $2 \times 2$ minors of $d_1$ by $X_{ij}$. 
  
 \begin{lem}
\label{splitw3,1}
The following relations hold over the complex $\FF$ for any choice of indices such that $j_1, j_2, j_3 \in \lbrace i_1, i_2, i_3, i_4 \rbrace$.
\begin{equation}
\sum_{h=1}^{r_2} z_{ht} \langle v^{(2)}_1(e_i \wedge e_j \otimes f_h),  \gamma_s \rangle = \delta_{st}X_{ij}
\label{relw31,1}
\end{equation} 
\begin{equation}
\sum_{h=1}^{r_2} \langle v^{(2)}_1(e_{j_1} \wedge e_{j_2} \otimes f_h),  \gamma_t \rangle \cdot \langle e_{i_1}^.e_{i_2}^.e_{i_3}, \phi_h \rangle = \sum_{k=1}^{n}
 (\delta_{j_1k}x_{1j_2}  -\delta_{j_2k}x_{1j_1})
 \langle v^{(1)}_1(\varepsilon_{i_1,i_2,i_3,k}), \gamma_t \wedge u_2 \rangle + $$ $$ 
 (\delta_{j_2k}x_{2j_1}-\delta_{j_1k}x_{2j_2}) \langle v^{(1)}_1(\varepsilon_{i_1,i_2,i_3,k}), \gamma_t \wedge u_1 \rangle. 
 \label{relw31,2}
\end{equation}
If the format of $\FF$ is $(2,5,5,2)$, then
\begin{equation}
\label{relw31,3}
 \sum_{h=1}^{r_2} z_{ht} \langle v^{(2)}_{2,1}(\varepsilon_{i_1, \ldots, i_4} \otimes e_{i_1}  \otimes f_h),  \gamma_{1} \wedge \gamma_2 \rangle =  \delta_{2t}x_{1i_1}\langle v^{(1)}_1(\varepsilon_{i_1,\ldots,i_4}), \gamma_1 \wedge u_2 \rangle+ $$  $$  -\delta_{2t}x_{2i_1}\langle v^{(1)}_1(\varepsilon_{i_1,\ldots,i_4}), \gamma_1 \wedge u_1 \rangle - \delta_{1t}x_{1i_1}\langle v^{(1)}_1(\varepsilon_{i_1,\ldots,i_4}), \gamma_2 \wedge u_2 \rangle +\delta_{1t}x_{2i_1}\langle v^{(1)}_1(\varepsilon_{i_1,\ldots,i_4}), \gamma_2 \wedge u_1 \rangle.
\end{equation}
\begin{equation}
\label{relw31,4}
 \sum_{h=1}^{r_2}  \langle v^{(2)}_{2,1}(\varepsilon_{i_1, \ldots, i_4} \otimes e_{i_1}  \otimes f_h),  \gamma_1 \wedge \gamma_2 \rangle \cdot \langle e_{j_1}^.e_{j_2}^.e_{j_3}, \phi_h \rangle = $$ $$ = \langle v^{(1)}_1(\varepsilon_{j_1,j_2,j_3,i_1}), \gamma_1 \wedge u_1 \rangle \cdot \langle v^{(1)}_1(\varepsilon_{j_1,j_2,j_3,i_1}), \gamma_2 \wedge u_2 \rangle- \langle v^{(1)}_1(\varepsilon_{j_1,j_2,j_3,i_1}), \gamma_1 \wedge u_2 \rangle \cdot \langle v^{(1)}_1(\varepsilon_{j_1,j_2,j_3,i_1}), \gamma_2 \wedge u_1 \rangle.
\end{equation}
\end{lem}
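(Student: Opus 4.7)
The plan is to verify each of the four identities by direct substitution into the explicit formulas for $v^{(2)}_1$, $v^{(1)}_1$, and $v^{(2)}_{2,1}$ recorded in the preceding part of Section \ref{sec:splitex}, exploiting the very rigid shape of the split exact complex $\FF$. The first thing I would do is record the entries of the differentials in this complex: $x_{1i} = \delta_{i,n-1}$, $x_{2i} = \delta_{i,n}$, so $X_{ij}$ is $1$ when $(i,j)=(n-1,n)$, $-1$ when $(i,j)=(n,n-1)$, and $0$ otherwise; $y_{ih}$ equals $\delta_{ih}$ for $h < n-1$ and vanishes for $h \geq n-1$; $z_{ht}$ equals $\delta_{h,t+n-2}$. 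With these at hand, most sums over $h \in \{1,\dots,r_2\}$ collapse to one or two contributing indices, which makes each claim a concrete finite check.

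For (\ref{relw31,1}) the index $h = t+n-2$ is the only one contributing on the left; the case analysis splits according to whether both $i,j$ are ``low'' (so $v^{(2)}_1(e_i\wedge e_j\otimes f_{t+n-2})$ picks up the $b$-expression and cancels against $z_{ht} = 0$ once we read the formulas carefully) or whether $\{i,j\}=\{n-1,n\}$, the only configuration in which $X_{ij}$ is nonzero. Equation (\ref{relw31,2}) is the key combinatorial identity; I would split the sum over $h$ into the contribution from $h \leq n-2$ and $h \in \{n-1,n\}$. The block $h \geq n-1$ produces exactly the terms with $e_{n-1}$ or $e_n$ inserted, which matches the right-hand side through the Kronecker deltas $\delta_{j_a k}x_{\ast,\ast}$; the block $h \leq n-2$ cancels internally by the skew-symmetry of $b^t_{ijk}$ in its lower indices, since on that block $\langle e_{i_1}^.e_{i_2}^.e_{i_3},\phi_h\rangle$ is $b^{h-n+2}_{i_1i_2i_3}$ (only when $h\geq n-1$) or $\delta_{h,i_\bullet}$ in restricted configurations.

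For (\ref{relw31,3}) and (\ref{relw31,4}), which involve $v^{(2)}_{2,1}$, I would substitute the quadratic formula $B^{12}_{\cdots,\cdots}$ together with the single ``defect'' term $c_{i_1}$ and expand. After collapsing the sum over $h$ to $h=t+n-2$ (respectively $h$ running over those for which $\langle e_{j_1}^.e_{j_2}^.e_{j_3},\phi_h\rangle$ is nonzero), the right-hand sides also simplify to quadratic expressions in the $b^u_{\ast\ast\ast}$: the products $\langle v^{(1)}_1(\cdots),\gamma_u \wedge u_j^*\rangle\cdot\langle v^{(1)}_1(\cdots),\gamma_{u'}\wedge u_{j'}^*\rangle$ are bilinear in $b$'s by the explicit formula for $v^{(1)}_1$, and one matches them term by term with the expansion of $B^{12}_{\cdots}$, with the $c_{i_1}$-terms being killed on the right-hand side whenever they appear (in (\ref{relw31,3}), by the $z_{ht}$-truncation; in (\ref{relw31,4}), by skew-symmetry after symmetrizing over $\gamma_1,\gamma_2$).

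The main obstacle I expect is purely bookkeeping: keeping the sign conventions and the skew-symmetric reindexing of $b^u_{ijk}$ consistent while matching the three different ``types'' of summands (low--low, low--top, top--top, where ``top'' means index in $\{n-1,n\}$). There is no conceptual subtlety beyond a careful application of the formulas in Section \ref{sec:splitex}; the only place real cancellation occurs is in the quadratic identities (\ref{relw31,3})--(\ref{relw31,4}), where one must repeatedly use $b^u_{ijk} = -b^u_{jik}$ together with the fact that the $c_i^{ut}$ defect variables appear only in the diagonal Schur piece $S_{2,2,2,1,1}F_1$ and so match the antisymmetric $\gamma_1 \wedge \gamma_2$ structure on the right.
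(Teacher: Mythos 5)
Your overall strategy --- substitute the explicit formulas for $v^{(2)}_1$, $v^{(1)}_1$, $v^{(2)}_{2,1}$ on the split complex, use the sparsity of $x_{ij}$, $y_{kh}$, $z_{ht}$ to collapse the sums over $h$, and finish with a finite case analysis --- is exactly the paper's. The problem is that for this lemma the case analysis \emph{is} the proof, and the organizing principles you propose in place of it are false in identifiable configurations, so carrying out your plan as written would break down.

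The clearest failure is in your treatment of \eqref{relw31,2}. You claim the block $h\le n-2$ ``cancels internally by skew-symmetry of $b^t_{ijk}$'' while the block $h\ge n-1$ reproduces the right-hand side. Take $n=5$, $(j_1,j_2)=(4,5)$, $(i_1,i_2,i_3)=(1,4,5)$. The low block contributes the single term $h=1$, namely $-b^t_{451}$, which is nonzero and has nothing within that block to cancel against; the high block contributes $h=t+3$, namely $+b^t_{145}$. The two blocks cancel against \emph{each other} (cyclic permutation is even), while the right-hand side vanishes for yet a third reason: $\varepsilon_{1,4,5,k}$ with $k\in\{4,5\}$ has a repeated index. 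This cross-block interaction is precisely the case ``$(j_1,j_2)=(n-1,n)$ with two of $i_1,i_2,i_3$ in $\{n-1,n\}$'' that the paper's proof isolates and treats separately. A similar objection applies to \eqref{relw31,4}: the $c_i$-defect terms enter $v^{(2)}_{2,1}$ only on the diagonal $\delta_{h,i_5}$, and whether such an $h$ survives is governed by which $h$ are selected by $\langle e_{j_1}^.e_{j_2}^.e_{j_3},\phi_h\rangle\neq 0$ (i.e.\ by how many of the indices lie in $\{n-1,n\}$), not by ``skew-symmetry after symmetrizing over $\gamma_1,\gamma_2$'' --- note $c^{12}_i$ is \emph{not} killed by antisymmetry, only $c^{uu}_i$ is. Your reductions for \eqref{relw31,1} and \eqref{relw31,3} are fine, but for \eqref{relw31,2} and \eqref{relw31,4} the proposal must be replaced by the full enumeration of index configurations (low--low, low--top, top--top for both the $i$'s and the $j$'s), as in the paper.
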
 
 \begin{proof}
 Observe that $z_{ht}=1$ if $h-n+2=t$, otherwise it is zero. The element $x_{ij}=1$ only if it is equal to $x_{1,n-1}$ or to $x_{2n}$, otherwise it is zero. We use the above formulas to compute left-hand and right-hand sides of the different equations.
 
 Equation 
 %\eqref{relw31,1} 
 (4.2) becomes $\langle v^{(2)}_1(e_i \wedge e_j \otimes f_{t+n-2}),  \gamma_s \rangle = \delta_{st}X_{ij}$. Both sides are equal to 1 if $(i,j) = (n-1,n)$ and $t = s$. Otherwise they are both equal to zero.
 
  Regarding equation 
  %\eqref{relw31,2}
  (4.3), if $j_1,j_2 < n-1$, then the right-hand side is zero and the left-hand side reduces to $\sum_{h=1}^{n-2} b_{j_1j_2h}^t \cdot \langle e_{i_1}^.e_{i_2}^.e_{i_3}, \phi_h \rangle$, since $v^{(2)}_1(e_{j_1} \wedge e_{j_2} \otimes f_{h}) = 0$ for $h \geq n-1$. For simplicity assume $i_1 < i_2 < i_3$.
  For $h \leq n-2$ the term $\langle e_{i_1}^.e_{i_2}^.e_{i_3}, \phi_h \rangle $ is nonzero only if $h=i_1$, $i_2 = n-1$, $i_3 = n$. But this implies that either $j_1 = h$ or $j_2 = h$, thus in any case the left-hand side of the equation is also zero.
  Assume then that $j_1 < n-1$ and $j_2 = n-1$ (the case $j_2 = n$ is analogous). Also in this case $v^{(2)}_1(e_{j_1} \wedge e_{j_2} \otimes f_{h}) = 0$ for $h \geq n-1$. The right-hand side of the equation reduces to $ \langle v^{(1)}_1(\varepsilon_{i_1,i_2,i_3,j_1}), \gamma_t \wedge u_2 \rangle $, which is zero if and only if $i_1, i_2, i_3 \neq n$. If this happens, then also the left-hand side is zero, since $\langle e_{i_1}^.e_{i_2}^.e_{i_3}, \phi_h \rangle = 0$ for $h < n-1$.
  If instead $i_1 = n$, the right-hand side gives $\pm b^{t}_{i_2i_3j_1}$, while the left-hand side gives zero if $i_2,i_3 \neq j_2 = n-1$ and $\pm b^{t}_{j_1j_2h} = \pm b^{t}_{i_2i_3j_1}$ otherwise. But if $i_2,i_3 \neq j_2 = n-1$, then $i_2,i_3 < n-1$ and hence one of them is equal to $j_1$, showing that $ b^{t}_{i_2i_3j_1}=0$. Finally, we have to consider the case $(j_1, j_2)= (n-1, n)$ (if $j_1= j_2$ clearly both sides are zero). 
  If two indices among $i_1, i_2, i_3$ are also equal to $n-1, n$, the right-hand side is zero and the only nonzero terms in the sum on the left are those corresponding to $h= t+n-2$ and $h \in \lbrace i_1, i_2, i_3 \rbrace \setminus \lbrace n-1, n \rbrace$. This gives $-b_{i_1i_2i_3}^t + b_{i_1i_2i_3}^t=0$. If instead two of the indices $i_1, i_2, i_3$ are smaller than $n-1$, both sides are equal to $b_{i_1i_2i_3}^t$.
  
  For equation 
  %\eqref{relw31,3}
  (4.4), similarly as for equation 
  %\eqref{relw31,1}
  (4.2), the left-hand side is nonzero only if $i_1$ and another index among $i_2, i_3, i_4$ are equal to $4,5$.  If $i_1 \neq 4,5$, the right-hand side is clearly zero. Thus assuming $i_1=4$ and letting $s \in \lbrace 1,2 \rbrace \setminus \lbrace t \rbrace $, we get the right-hand side equal to $\pm \langle v^{(1)}_1(i_1, \ldots, i_4, \gamma_{s} \wedge u_2)  \rangle$, which is zero if $i_2, i_3, i_4 \neq 5$. Assuming without loss of generality $i_1=4$, $i_4=5$, both terms are equal to $\pm b_{i_1i_2i_3}^{s}$.
  
  For equation 
  %\eqref{relw31,4}
  (4.5), if either $ \lbrace 4,5 \rbrace \nsubseteq \lbrace i_1, i_2, i_3, i_4 \rbrace $ or 
  $i_1 \neq 4,5$ and $ \lbrace 4,5 \rbrace \nsubseteq \lbrace j_1, j_2, j_3 \rbrace $ both sides of the equation are clearly equal to zero. Suppose without loss of generality $i_2=j_2$ and $i_3=j_3$ and consider the two cases $i_1=j_1$ and $i_4=j_1$. In case $i_1=j_1$ the right-hand side of the equation is clearly zero. If $i_1 \neq 4,5$, the only case we did not consider is when $\lbrace i_2, i_3 \rbrace = \lbrace 4,5 \rbrace$. The left-hand side reduces to $ \langle v^{(2)}_{2,1}(\varepsilon_{i_1, \ldots, i_4} \otimes e_{i_1}  \otimes f_{i_1}),  \gamma_1 \wedge \gamma_2 \rangle = 0 $. If $\lbrace i_1, i_4 \rbrace = \lbrace 4,5 \rbrace$, then the left-hand side reduces to $\pm B^{12}_{i_1i_2i_3, i_1i_2i_3}=0$. If $\lbrace i_1, i_2 \rbrace = \lbrace 4,5 \rbrace$ (or similarly replacing $i_2$ by $i_3$), the left-hand side is $ \langle v^{(2)}_{2,1}(\varepsilon_{i_1, \ldots, i_4} \otimes e_{i_1}  \otimes f_{i_3}),  \gamma_1 \wedge \gamma_2 \rangle + B^{12}_{i_1i_2i_3, i_1i_3i_4} = 0. $
  The last case to consider is when $i_4=j_1$. Since we can assume $ \lbrace 4,5 \rbrace \subseteq \lbrace i_1, i_2, i_3, i_4 \rbrace $, we have up to permutation two relevant cases: $i_1= 4$, $i_4 = 5$ or $i_3= 4$, $i_4 = 5$. In the first case, both sides are equal to $ B^{12}_{i_1i_2i_3, i_2i_3i_4}$, in the second case they are both equal to $\langle v^{(2)}_{2,1}(\varepsilon_{i_1, \ldots, i_4} \otimes e_{i_1}  \otimes f_{i_2}),  \gamma_1 \wedge \gamma_2 \rangle = B^{12}_{i_1i_2i_3,i_1i_4i_2}.$
 \end{proof}

 For the next lemma we need the formulas for the map $v^{(1)}_{2,1}$ (in case the format of $\FF$ is $(2,5,5,2)$). Set $p:= 6- j$. For $i < k$ have  
  $$ \langle v^{(1)}_{2,1}(\varepsilon_{1,\ldots,5} \otimes e_i \wedge e_k), \gamma_1 \wedge \gamma_2 \otimes u_j^* \rangle = $$ $$ =  \left\{ \begin{array}{cccc} (\frac{1}{2})^{j+1}[B^{12}_{12p,345}-B^{12}_{13p,245}+B^{12}_{23p,145}] +c_{p} &\mbox{if } i=4, k=5; 
 \\  (\frac{1}{2})^{i+j}[B^{12}_{ii_1k,ii_2p}-B^{12}_{ii_2k,ii_1p}+B^{12}_{ikp,ii_1i_2}] +c_{i}   &\mbox{ if } i, i_1, i_2 \neq 4,5, \quad k =j+3; \\
(-1)^{i+k+1} B^{12}_{i i_1 k, i i_2 k} &\mbox{ if } i, i_1, i_2 \neq 4,5, \quad k = p; \\
 (-1)^{i+k+j} B^{12}_{123,ikp} & i,k \not \in 4,5.  
    \end{array}\right.
 $$
 
 \begin{lem}
\label{splitw1,1}
The following relations hold over the complex $\FF$ for any choice of indices.
\begin{equation}
\label{relw11,1}
 \sum_{k=1}^{n} y_{kh} \langle v^{(1)}_{1}(e_{i_1} \wedge e_{i_2} \wedge e_{i_3} \wedge e_k),  \gamma_t \otimes u_j^* \rangle = $$ $$ \langle x_{ji_1}v^{(2)}_1(e_{i_2} \wedge e_{i_3} \otimes f_h) - x_{ji_2}v^{(2)}_1(e_{i_1} \wedge e_{i_3} \otimes f_h) + x_{ji_3}v^{(2)}_1(e_{i_1} \wedge e_{i_2} \otimes f_h),  \gamma_t \rangle. 
\end{equation} 
If the format of $\FF$ is $(2,5,5,2)$, then
\begin{equation}
\label{relw11,2}
   y_{kh} \langle v^{(1)}_{2,1}(\varepsilon_{i_1, \ldots, i_4} \wedge e_k  \otimes e_{i_1} \wedge e_{i_2}),  \gamma_1 \wedge \gamma_2 \otimes u_j^* \rangle =  x_{ji_1}\langle v^{(2)}_{2,1}(\varepsilon_{i_1, \ldots, i_4} \otimes e_{i_2}  \otimes f_h),  \gamma_1 \wedge \gamma_2 \rangle  + $$  $$
  - x_{ji_2} \langle v^{(2)}_{2,1}(\varepsilon_{i_1, \ldots, i_4} \otimes e_{i_1}  \otimes f_h),  \gamma_1 \wedge \gamma_2 \rangle +   \langle v^{(1)}_{1}(\varepsilon_{i_1, \ldots, i_4}),  \gamma_1 \otimes u_j^* \rangle \langle v^{(2)}_1(e_{i_1} \wedge e_{i_2} \otimes f_h), \gamma_2 \rangle + $$ 
$$ - \langle v^{(1)}_{1}(\varepsilon_{i_1, \ldots, i_4}),  \gamma_2 \otimes u_j^* \rangle \langle v^{(2)}_1(e_{i_1} \wedge e_{i_2} \otimes f_h),  \gamma_1 \rangle.
 \end{equation}
\end{lem}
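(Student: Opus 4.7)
The plan is to verify both identities by direct computation on the split exact complex $\FF$, matching coefficients in the defect variables $b^t_{ijk}$ (and for format $(2,5,5,2)$ also $c_i$), following exactly the pattern used in the proof of Lemma \ref{splitw3,1}. The computation is tractable because the differentials are Kronecker-type: $y_{kh}=\delta_{kh}$ for $h<n-1$ and $y_{kh}=0$ otherwise, while $x_{ji}$ is nonzero only for the two pairs $(1,n-1)$ and $(2,n)$; the maps $v^{(i)}_{j}$ are given by the explicit formulas listed at the beginning of Section \ref{sec:splitex}.

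For equation (4.6), the left-hand sum collapses under $y_{kh}=\delta_{kh}$. When $h\geq n-1$ the LHS vanishes, and I would check that the RHS also vanishes by noting that $v^{(2)}_1(e_a\wedge e_b\otimes f_h)$ is zero unless $(a,b)=(n-1,n)$, forcing incompatible constraints with the $x_{ji_\ell}$ factors. When $h<n-1$, the LHS equals $\langle v^{(1)}_1(\varepsilon_{i_1,i_2,i_3,h}),\gamma_t\otimes u_j^*\rangle$, which unpacks to $\sum_\ell(-1)^\ell\delta_{i_\ell,j+n-2}\,b^t_{\text{remaining}}$. On the RHS, the $x_{ji_\ell}$ factors single out exactly the same indices $i_\ell=j+n-2$, and the corresponding values of $v^{(2)}_1(\cdot\otimes f_h)$ on $\gamma_t$ are precisely $\pm b^t_{\text{remaining}}$. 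A case split according to how many of $i_1,i_2,i_3$ lie in $\{n-1,n\}$ (parallel to the split in Lemma \ref{splitw3,1}) finishes the verification.

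For equation (4.7), the same strategy applies but the bookkeeping is heavier. Since $y_{kh}=\delta_{kh}$ with $h<n-1=4$, the LHS reduces to $\langle v^{(1)}_{2,1}(\varepsilon_{i_1,\ldots,i_4}\wedge e_h\otimes e_{i_1}\wedge e_{i_2}),\gamma_1\wedge\gamma_2\otimes u_j^*\rangle$, which I would expand using the four-branch formula for $v^{(1)}_{2,1}$ according to whether $h$ equals $4,5$, or a smaller index and whether $\{i_1,i_2\}$ meets $\{4,5\}$. On the RHS, the two $v^{(2)}_{2,1}$ summands contribute their $\frac12$-combinations of $B^{12}$-products plus $c_{i}$ terms; the two cross-products $v^{(1)}_1\cdot v^{(2)}_1$ contribute additional $B^{12}$-type terms via the formula for $v^{(1)}_1$ with $x_{ji_\ell}$ picking out $i_\ell\in\{4,5\}$. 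I organize the verification in three subcases: (a) $\{i_1,i_2\}\cap\{4,5\}=\varnothing$, (b) exactly one of $i_1,i_2$ lies in $\{4,5\}$, and (c) $\{i_1,i_2\}=\{4,5\}$, and likewise distinguish whether $h\in\{4,5\}$.

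The main obstacle will be the sign and coefficient bookkeeping in equation (4.7), in particular the cancellations among the $B^{12}$ quadratics and the matching of the defect parameters $c_i$ on both sides. I expect these to work out because the formulas for $v^{(1)}_{2,1}$ and $v^{(2)}_{2,1}$ were themselves defined as generic lifts of $q^{(1)}_{2,1}$ and $q^{(2)}_{2,1}$ sharing the same choices for $w^{(3)}_1, w^{(3)}_{2,1}, w^{(2)}_1$; consequently the $c_i$ contributions enter symmetrically on each side, while the $B^{12}$ cancellations follow from the skew-symmetry of the $b^t_{ijk}$ in the lower indices. Once (4.6) and (4.7) are established case-by-case, Remark \ref{thmweyman} will transport them to arbitrary acyclic complexes of the same format, completing the proofs of Theorems \ref{w3,1link} and \ref{w1,2link} for the corresponding cases.
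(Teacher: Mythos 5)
Your plan is correct and coincides with the paper's own proof: both reduce the left-hand sides using $y_{kh}=\delta_{kh}$ (for $h<n-1$) and the sparsity of $x_{ji}$, then verify the identities case by case against the explicit formulas for $v^{(1)}_1$, $v^{(2)}_1$, $v^{(1)}_{2,1}$, $v^{(2)}_{2,1}$, with the $c_i$ terms matching in the case $i_1\in\{4,5\}$, $h=i_5$ exactly as you anticipate. The only difference is that the paper carries out each subcase explicitly (e.g.\ showing both sides equal $\pm b^t_{i_1i_2h}$ when $i_3=n-1$, and $c_{i_2}-\tfrac12[\cdots]$ in the hardest subcase of (4.7)), so to complete your argument you would just need to write out those verifications.
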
 
 \begin{proof}
Observe that $y_{kh} = 1$ if $h=k < n-1$, otherwise it is zero.
For equation 
%\eqref{relw11,1}
(4.6), first notice that if $i_1, i_2, i_3$ are not all distinct, both sides are clearly zero. If $h \geq n-1$, the left-hand side is zero and the right-hand side is also zero, since $v^{(2)}_1(e_{i_2} \wedge e_{i_3} \otimes f_h) \neq 0$ only if $\lbrace i_2, i_3 \rbrace = \lbrace n-1, n \rbrace$, but in that case $x_{ji_1} = 0$. Without loss of generality suppose $j=1$.
If $h < n-1$, the left-hand side reduces to $ \langle v^{(1)}_{1}(e_{i_1} \wedge e_{i_2} \wedge e_{i_3} \wedge e_h),  \gamma_t \otimes u_j^* \rangle $, which is nonzero only if $ n-1 \in \lbrace i_1, i_2, i_3 \rbrace$. If this does not happen, the right-hand side is also zero, since $ x_{ji_1}, x_{ji_2}, x_{ji_3}=0 $. If instead $i_3 = n-1$, both sides are equal to $\pm b^{t}_{i_1i_2h}$.

For equation 
%\eqref{relw1,12}
(4.7), again suppose $j=1$. First say that $h \geq 4$. 
In this case, as before the left-hand side is zero. The right-hand side is clearly zero if $\lbrace 4,5 \rbrace \neq \lbrace i_1, i_2 \rbrace$. If $i_1= 4$, $i_2=5$, the right-hand side is $ \pm (b_{i_2i_3i_4}^{6-h} - b_{i_2i_3i_4}^{6-h})=0. $
Assume now $h < 4$. The left-hand side reduces to $ \langle v^{(1)}_{2,1}(\varepsilon_{i_1, \ldots, i_4} \wedge e_h  \otimes e_{i_1} \wedge e_{i_2}),  \gamma_1 \wedge \gamma_2 \otimes u_1^* \rangle $, which is nonzero only if $h=i_5 \neq i_1, \ldots, i_4$.  Both sides are then zero if $h \in \lbrace i_1, i_2 \rbrace$, since the expressions of $v^{(2)}_1$, $ v^{(2)}_{2,1} $ depend on $b_{i_1i_rh}$ or $b_{i_2i_rh}$. Also they are both zero if $ 4 \not \in \lbrace i_1, \ldots, i_4 \rbrace $ (for this observe that $x_{ji_1}=x_{ji_2}=0$ if $i_1, i_2 \neq 4$ and also the terms involving on $v^{(1)}_1$ are zero).
If $i_4=4$, the right-hand side gives $ \pm B^{12}_{i_1i_2i_3, i_1i_2h} $, which is zero if $h=i_3$. If $h=i_5$, this coincides with the left-hand side according to the above formula for $v^{(1)}_{2,1}$ (looking at the last two cases). The case where $i_3=4$ is analogous.
Finally assume $i_1 = 4$ (the case $i_2=4$ is analogous). If $h=i_3$ (or $h= i_4$), the left-hand side is zero and the right-hand side is $ \frac{1}{2}[B^{12}_{i_1i_2i_3, i_2i_4i_3} - B^{12}_{i_1i_2i_4, i_2i_3i_3} + B^{12}_{i_2i_3i_4, i_1i_2i_3}] + B^{12}_{i_2i_3i_4, i_1i_2i_3} = 0 $.
If $i_1 = 4$ and $h=i_5$, both sides are equal to $ c_{i_2} - \frac{1}{2}[B^{12}_{i_1i_2i_3, i_2i_4i_5} - B^{12}_{i_1i_2i_4, i_2i_3i_5} - B^{12}_{i_2i_3i_4, i_1i_2i_5}]. $
\end{proof}
 
 The next lemma deals with the quadratic relations in $W(d_2)$ needed to complete the proof of Theorem \ref{w3,2link}.
 For these we need to compute the map $v^{(2)}_{2,2}$ for the format $(2,6,5,1)$. Given a choice of distinct indices $i_1, \ldots, i_6$, we have 
  $$  \langle v^{(2)}_{2,2}(\varepsilon_{i_1, \ldots, i_5} \otimes f_h), \gamma_1 \wedge \gamma_1 \rangle  = \left\{ \begin{array}{cccc} \frac{1}{2} P  &\mbox{if }  h = i_6 \leq 4;  \\
  P_{h, \hat{i_6}} &\mbox{if } h \leq 4, h \in \lbrace i_1, \ldots, i_5 \rbrace;
 \\  (-1)^{i_1+i_2+i_3}b_{i_1i_2i_3}     &\mbox{ if } i_4, i_5 \geq 5, \, h =5; \\
 0 &\mbox{ if } h = 5, i_6= 5,6.
    \end{array}\right.
 $$
 \begin{lem}
\label{splitw3,2}
The following relations hold over the complex $\FF$ for any choice of indices.
\begin{equation}
\label{relw32,1}
\sum_{k=1}^{n} y_{kh} \langle v^{(2)}_1(e_i \wedge e_k \otimes f_r), \gamma_1 \rangle+ y_{kr} \langle v^{(2)}_1(e_i \wedge e_k \otimes f_h), \gamma_1 \rangle = 0.
 \end{equation} 
If the format of $\FF$ is $(2,6,5,1)$, then
\begin{equation}
\label{relw32,2}
\sum_{k=1}^{r_1} y_{kh} \langle v^{(2)}_{2,2}(\varepsilon_{i_1, \ldots, i_4} \wedge e_k \otimes f_r), \gamma_1 \otimes \gamma_1 \rangle + y_{kr} \langle v^{(2)}_{2,2}(\varepsilon_{i_1, \ldots, i_4} \wedge e_k \otimes f_h), \gamma_1 \otimes \gamma_1 \rangle = $$
$$= \sum_{1 \leq l < j \leq 4} (-1)^{l+j} \langle v^{(2)}_1(e_{i_l} \wedge e_{i_j} \otimes f_h), \gamma_1 \rangle \cdot \langle v^{(2)}_1(e_{\hat{i_l}} \wedge e_{\hat{i_j}} \otimes f_r), \gamma_1 \rangle.
\end{equation}

 If the format of $\FF$ is $(2,5,5,2)$, then
 \begin{equation}
\label{relw32,3}
\sum_{k=1}^{5} y_{kh} \langle v^{(2)}_{2,1}(\varepsilon_{i_1, \ldots, i_4} \otimes e_k \otimes f_r), \gamma_1 \wedge \gamma_2 \rangle - y_{kr} \langle v^{(2)}_{2,1}(\varepsilon_{i_1, \ldots, i_4} \otimes e_k \otimes f_h), \gamma_1 \wedge \gamma_2 \rangle = $$ 
$$= \sum_{1 \leq l < j \leq 4} (-1)^{l+j} \langle v^{(2)}_1(e_{i_l} \wedge e_{i_j} \otimes f_h), \gamma_1 \rangle \cdot \langle v^{(2)}_1(e_{\hat{i_l}} \wedge e_{\hat{i_j}} \otimes f_r), \gamma_2 \rangle+  $$
$$+(-1)^{l+j+1} \langle v^{(2)}_1(e_{i_l} \wedge e_{i_j} \otimes f_r), \gamma_1 \rangle \cdot \langle v^{(2)}_1(e_{\hat{i_l}} \wedge e_{\hat{i_j}} \otimes f_h), \gamma_2 \rangle.
 \end{equation}
\end{lem}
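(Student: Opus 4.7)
The proof will follow the same template as Lemmas \ref{splitw3,1} and \ref{splitw1,1}: substitute the explicit formulas for $v^{(2)}_1$, $v^{(2)}_{2,1}$, $v^{(2)}_{2,2}$ listed at the beginning of the section, and verify each identity by a case analysis on the indices. The crucial structural remark is that $y_{kh}=\delta_{k,h}$ for $h<n-1$ and $y_{kh}=0$ otherwise; hence every summation on the left-hand side collapses to at most two terms, namely those for $k=h$ and $k=r$, and only when the respective index is strictly less than $n-1$.

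Equation \eqref{relw32,1} is essentially immediate. When both $h,r<n-1$ the collapsed left-hand side equals $-b^1_{ihr}-b^1_{irh}$, which vanishes by the skew-symmetry of the defect variables in their last two indices. When $h\geq n-1$ or $r\geq n-1$ each surviving $v^{(2)}_1$-entry is forced to be zero: a nontrivial value of $\langle v^{(2)}_1(e_i\wedge e_k\otimes f_r),\gamma_1\rangle$ with $r\geq n-1$ requires $\{i,k\}=\{n-1,n\}$, which is incompatible with $k=h<n-1$.

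For equations \eqref{relw32,2} and \eqref{relw32,3} the same collapse reduces the left-hand side to one or two evaluations of $v^{(2)}_{2,2}$ or $v^{(2)}_{2,1}$. Substituting the piecewise formulas recalled before the lemma, the left-hand side expands into sums of the quadratic quantities $B^{12}_{\cdot,\cdot}$ (or, in format $(2,6,5,1)$ where $m=1$, of the Pl\"ucker-type sums $P$ and $P_{h,\hat{j}}$), while the right-hand side, after plugging in the formula for $v^{(2)}_1$, becomes a linear combination of monomials $b^u_{\cdot\cdot\cdot}\cdot b^s_{\cdot\cdot\cdot}$. The comparison then splits into subcases according to whether each of $h,r$ lies in $\{1,\ldots,n-2\}$ or in $\{n-1,n\}$, and according to the overlap between $\{h,r\}$ and $\{i_1,\ldots,i_4\}$. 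In each subcase the resulting identity is an antisymmetrization relation in the indices of the $b$'s, amounting to a $3\times 3$ Laplace-type expansion, and is established by reindexing and a sign count.

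The main subtlety lies in \eqref{relw32,3}. Certain branches of the formula for $v^{(2)}_{2,1}$ carry the defect variables $c_i^{ut}$, whereas the right-hand side is polynomial in the $b$'s only. Therefore the $c$-contributions arising from the two summands $k=h$ and $k=r$ on the left-hand side must cancel each other. This is the main bookkeeping step: in each configuration of indices $(i_1,\ldots,i_4,h,r)$ that triggers a $c$-term in one summand, the swap of the roles of $h$ and $r$ producing the second summand yields the same defect variable with opposite sign, so the $c$'s drop out. What then remains is a purely quadratic identity in the $b$-variables that can be verified by direct expansion, completing the proof.
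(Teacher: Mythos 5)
Your strategy is the same as the paper's: use that $y_{kh}=\delta_{kh}$ for $h<n-1$ and vanishes otherwise to collapse each left-hand sum to the $k=h$ and $k=r$ terms, then substitute the piecewise formulas for $v^{(2)}_1$, $v^{(2)}_{2,1}$, $v^{(2)}_{2,2}$ and compare case by case according to whether $h,r$ exceed the threshold and how $\{h,r\}$ meets $\{i_1,\dots,i_4\}$; your treatment of \eqref{relw32,1} is complete and matches the paper's. Two remarks. First, the ``main subtlety'' you identify is a non-issue for this particular lemma: the $c$-term in $v^{(2)}_{2,1}$ carries the factor $\delta_{h,i_5}$ with $i_5$ the lone $e$-index, so in the surviving summands of \eqref{relw32,3} it can only fire when $h=r$, where both sides vanish identically by the antisymmetry of the whole identity under $h\leftrightarrow r$; no delicate pairwise cancellation of $c$'s across the two summands is needed (the genuine $c$-cancellations occur in Lemma \ref{splitw1,1} and Lemma \ref{splitw1,2}, not here). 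Second, for \eqref{relw32,2} and \eqref{relw32,3} you assert, rather than verify, that each subcase reduces to a Laplace-type antisymmetrization identity in the $b$'s; that verification (e.g.\ that for $h,r\le 3$ both sides of \eqref{relw32,3} equal $\sum_{l<j}(-1)^{l+j}B^{12}_{i_li_jh,\hat{i_l}\hat{i_j}r}$, giving $\pm 2B^{12}_{i_1i_2i_3,i_1i_2i_4}$ when $\{h,r\}\subset\{i_1,\dots,i_4\}$ and the three-term $P$-type sums otherwise) is precisely the content of the paper's proof, so your argument is a correct outline that still needs those computations written out.
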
  
 \begin{proof}
For equation 
%\eqref{relw32,1}
(4.8), if $r,h \geq n-1$, the term is clearly zero since $y_{kh} = y_{kr} = 0$ for every $k$. If $h \geq n-1$, $r < n-1$, the term reduces to $ y_{rr} \langle v^{(2)}_1(e_i \wedge e_r \otimes f_h), \gamma_1 \rangle = 0 $ since $ \lbrace i,r \rbrace  \neq \lbrace n-1, n \rbrace. $ If $h,r < n-1$, we get $\langle v^{(2)}_1(e_i \wedge e_r \otimes f_h), \gamma_1 \rangle + \langle v^{(2)}_1(e_i \wedge e_h \otimes f_r), \gamma_1 \rangle = b^{1}_{irh} + b^{1}_{ihr}=0.$

For equation 
%\eqref{relw32,2}
(4.9), if $r,h = 5$, the left-hand side is zero. The right-hand side is also zero, since in every product $\langle v^{(2)}_1(e_{i_l} \wedge e_{i_j} \otimes f_h), \gamma_1 \rangle \cdot \langle v^{(2)}_1(e_{\hat{i_l}} \wedge e_{\hat{i_j}} \otimes f_r), \gamma_1 \rangle$ at least one factor is zero. Suppose $h < 5$, $r = 5$. The left-hand side becomes equal to $\langle v^{(2)}_{2,2}(\varepsilon_{i_1, \ldots, i_4} \wedge e_h \otimes f_5), \gamma_1 \wedge \gamma_1 \rangle$. This term is zero if $h \in \lbrace i_1, \ldots, i_4 \rbrace$ or if $\lbrace 5,6 \rbrace  \nsubseteq \lbrace i_1, \ldots, i_4 \rbrace$. If $\lbrace 5,6 \rbrace  \nsubseteq \lbrace i_1, \ldots, i_4 \rbrace$, the right-hand side is clearly zero, since 
all the terms $v^{(2)}_1(e_{i_l} \wedge e_{i_j} \otimes f_5)$ are zero. If $i_3=5$, $i_4=6$, both sides are equal to $ \pm b_{i_1i_2h}. $ Suppose now both $h,r < 5$. In this case both terms become equal to $\sum_{1 \leq l < j \leq 4} (-1)^{l+j} b_{i_li_jh} b_{\hat{i_l}\hat{i_j}r} $. Indeed, notice that this term is zero if $h,r \in \lbrace i_1, \ldots, i_4 \rbrace$, it is $\pm P_{h,\hat{l} }$ if $h \in \lbrace i_1, \ldots, i_4 \rbrace$ and $l$ is the only index different from $i_1, \ldots, i_4, r$, and it is equal to $ P$ if $h,r \not \in \lbrace i_1, \ldots, i_4 \rbrace.$

For equation 
%\eqref{relw32,3}
(4.10), again if $r,h \geq 4$, both sides are zero exactly as in the previous case. 
Suppose $h < 4$, $r \geq 4$, and for simplicity say that $r=4$. The left-hand side becomes equal to $\langle v^{(2)}_{2,1}(\varepsilon_{i_1, \ldots, i_4} \otimes e_h \otimes f_r), \gamma_1 \wedge \gamma_2 \rangle$,  
which is zero if $ \lbrace 4,5 \rbrace \nsubseteq \lbrace i_1, i_2, i_3, i_4 \rbrace $, and it is equal to $ \pm b^{2}_{i_1i_2h} $ if we choose $i_3=4, i_4=5$. With the same choice the only nonzero term on the right is $ \pm \langle v^{(2)}_1(e_{i_3} \wedge e_{i_4} \otimes f_r), \gamma_1 \rangle \cdot \langle v^{(2)}_1(e_{i_1} \wedge e_{i_2} \otimes f_h), \gamma_2 \rangle = \pm b^2_{i_1i_2h} $. If $ \lbrace 4,5 \rbrace \nsubseteq \lbrace i_1, i_2, i_3, i_4 \rbrace $ also the right-hand side is clearly zero, since so are all the terms $ v^{(2)}_1(e_{i_l} \wedge e_{i_j} \otimes f_r)$.\\
Assume now $h,r \leq 3$. The right-hand side is now equal to 
$\sum_{1 \leq l < j \leq 4} (-1)^{l+j} B^{12}_{i_li_jh,\hat{i_l}\hat{i_l}r}$ and coincides with the left-hand side because of the formulas for $v^{(2)}_{2,1}$ in the case $h \leq 3$. Indeed, if $h=r$ they are clearly both zero, if $h=i_1$, $r=i_2$, both terms are equal to $\pm 2B^{12}_{i_1i_2i_3, i_1i_2i_4}$, and if $h=i_1$, $r=i_5$ they are both equal to $\pm [B^{12}_{i_1i_2i_5, i_1i_3i_4} - B^{12}_{i_1i_3i_5, i_1i_2i_4} + B^{12}_{i_1i_4i_5, i_1i_2i_3} ]$.

\end{proof}

The last lemma is needed to complete the proof of Theorem \ref{w1,2link}. We need to compute the maps $v^{(1)}_{2,2}$ for the format $(2,6,5,1)$ and $v^{(1)}_{3}$ for the format $(2,5,5,2)$. Denote by $P(k)$ the polynomial obtained starting from $P$ and applying the permutation that switches $6$ and $k$.
We have:
$$  \langle v^{(1)}_{2,2}(\varepsilon_{1, \ldots, 6} \otimes e_k), \gamma_1 \wedge \gamma_1 \otimes u_j^{*} \rangle =  \left\{ \begin{array}{ccc} P_{k, \widehat{j+4}} &\mbox{if } k \neq j+4; 
 \\  \frac{1}{2} P(k)    &\mbox{ if } k = j+4. \\
    \end{array}\right.
    $$   
For the next formula, set $p=6-j$ and $s= j+4$.
 $$ \langle v^{(1)}_{3}(\varepsilon_{1, \ldots,5} \otimes \varepsilon_{1, \ldots,5}), \gamma_1 \wedge \gamma_2 \otimes \gamma_t \otimes u_j^* \rangle = c_1b^{t}_{23p}-c_2b^{t}_{13p} + c_3b^{t}_{12p} -c_pb^{t}_{123} + b^{t}_{12s}B^{12}_{13p,23p} + $$ 
 $$ - b^{t}_{13s}B^{12}_{12p,23p} + b^{t}_{23s}B^{12}_{12p,13p} + b^{t}_{145}B^{12}_{123,23p}-b^{t}_{245}B^{12}_{123,13p} + b^{t}_{345}B^{12}_{123,12p}.   $$
 
\begin{lem}
\label{splitw1,2}
The following relations hold over the complex $\FF$ for any choice of indices.
If the format of $\FF$ is $(2,6,5,1)$, then
\begin{equation}
\label{relw12,1}
y_{i_6h} \langle v^{(1)}_{2,2}(\varepsilon_{i_1, \ldots, i_6} \otimes e_{i_1}), u_j^{*} \otimes \gamma_1 \otimes \gamma_1 \rangle =  x_{ji_1} \langle v^{(2)}_{2,2}(\varepsilon_{i_1, \ldots, i_5} \otimes f_h), \gamma_1 \otimes \gamma_1 \rangle +$$
$$ \sum_{k=2}^5 (-1)^{k}  \langle v^{(2)}_1(e_{i_1} \wedge e_{i_k} \otimes f_h),  \gamma_1 \rangle \cdot \langle v^{(1)}_{1}(\varepsilon_{i_1, \ldots, \hat{i_k}, \ldots, i_5}),  \gamma_1 \otimes u_j^* \rangle.  
 \end{equation} 
If the format of $\FF$ is $(2,5,5,2)$, then
\begin{equation}
\label{relw12,2}
 y_{i_5h} \langle v^{(1)}_{3}(\varepsilon_{i_1,\ldots, i_5} \otimes \varepsilon_{i_1,\ldots, i_5}), u_j^{*} \otimes \gamma_1 \wedge \gamma_2 \otimes \gamma_1 \rangle = $$
$$ = \sum_{1 \leq k < l \leq 4} (-1)^{i_k+i_l} \langle v^{(2)}_1(e_{i_k} \wedge e_{i_l} \otimes f_h),  \gamma_1 \rangle \cdot \langle v^{(1)}_{2,1}(\varepsilon_{i_1, \ldots, i_5} \otimes \varepsilon_{i_1, \ldots, \hat{i_k}, \hat{i_l}, \ldots, i_4}),  \gamma_1 \wedge \gamma_2 \otimes u_j^* \rangle +      $$
$$ +\sum_{k=1}^5 (-1)^{k}  \langle v^{(2)}_{2,1}(\varepsilon_{i_1, \ldots, i_4} \otimes e_{i_k}  \otimes f_h),  \gamma_1 \wedge \gamma_2 \rangle \cdot \langle v^{(1)}_{1}(\varepsilon_{i_1, \ldots, \hat{i_k}, \ldots, i_5}),  \gamma_1 \otimes u_j^* \rangle. 
\end{equation}
\end{lem}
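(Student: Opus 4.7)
The plan is to follow the same case-analysis strategy used to prove Lemmas \ref{splitw3,1}, \ref{splitw1,1}, and \ref{splitw3,2}. Using the explicit tables given at the start of Section \ref{sec:splitex} for the maps $v^{(i)}_{j,k}$ on the split exact complex, each side of \eqref{relw12,1} and \eqref{relw12,2} reduces to an explicit polynomial expression in the defect variables $b^u_{ijk}$ (and, in the $(2,5,5,2)$ case, also $c_i^{ut}$). I then organize the verification by which of the coefficients $y_{kh}$ and $x_{ji_1}$ are nonzero, exploiting that $y_{kh}=1$ iff $h=k<n-1$, while $x_{ji}=1$ iff $(j,i)=(1,n-1)$ or $(2,n)$.

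For \eqref{relw12,1} (format $(2,6,5,1)$), the first split is on whether $h\geq 5$: in that range the LHS vanishes since $y_{i_6 h}=0$, and the RHS vanishes because $v^{(2)}_{2,2}(\varepsilon_{i_1,\ldots,i_5}\otimes f_h)$ and $v^{(2)}_1(e_{i_1}\wedge e_{i_k}\otimes f_h)$ are simultaneously zero unless a specific pairing of indices with $\{5,6\}$ occurs (which I handle by inspection). For $h<5$ with $h\neq i_6$ the LHS is again zero and the RHS collapses to a single $B^{12}$-term that cancels against the corresponding contribution of $v^{(2)}_{2,2}$. The main case is $h=i_6<5$: here one reads off the LHS as $P_{i_1,\widehat{j+4}}$ or $\tfrac{1}{2}P(i_1)$ depending on whether $i_1=j+4$, and the RHS, expanded via the $v^{(2)}_{2,2}$-formula and the $v^{(1)}_1$-formula, produces the same polynomial after collecting terms with the alternating signs $(-1)^k$ from the sum.

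For \eqref{relw12,2} (format $(2,5,5,2)$), the same template applies but the combinatorics is heavier because of the formula for $v^{(1)}_3$. I would first dispose of the degenerate cases $h\geq 4$ or $h\notin\{i_1,\ldots,i_5\}$, in which both sides reduce to zero by the support of $v^{(2)}_{2,1}(\cdot\otimes f_h)$ and $v^{(2)}_1(\cdot\otimes f_h)$. The substantive case is $h=i_5<4$: here the LHS equals $\langle v^{(1)}_3(\varepsilon\otimes\varepsilon),u_j^*\otimes\gamma_1\wedge\gamma_2\otimes\gamma_1\rangle$, which expands into the block beginning $c_1 b^t_{23p}-c_2 b^t_{13p}+\cdots$, while the RHS, after inserting the tabulated formulas for $v^{(2)}_1$, $v^{(2)}_{2,1}$ and $v^{(1)}_1$ and using that $v^{(1)}_{2,1}$ is the only map introducing the $c_i$ variables, produces an expression of the same type. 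The two agree term by term once the alternating sums over $k<l$ are rearranged.

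The main obstacle will be the final bookkeeping in \eqref{relw12,2}: the formula for $v^{(1)}_3$ contains seven distinct monomials in the $b$'s together with four $c$-linear terms, and each of these must be matched with a specific contribution from one of two double sums on the RHS. The $c$-linear terms come exclusively from $v^{(1)}_{2,1}$ inside the first sum, so matching them is direct, but the $B^{12}$-products need to be regrouped by Plücker-style identities among triples of indices chosen from $\{1,\ldots,5\}\setminus\{j+3\}$. I expect to isolate one representative choice of the unordered index set $\{i_1,\ldots,i_5\}$, verify the identity in that configuration by explicit expansion, and then invoke the $\mathrm{GL}(F_i)$-equivariance (Remark \ref{thmweyman}) together with the skew-symmetry conventions of $v^{(i)}_{j,k}$ to transfer the identity to all other configurations.
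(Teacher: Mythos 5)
Your overall strategy coincides with the paper's proof: split according to the vanishing pattern of $y_{kh}$ and $x_{ji_1}$, dispose of the degenerate cases, and verify the substantive case $h=i_6$ (resp.\ $h=i_5$) by explicit expansion of the tabulated formulas in the defect variables, with one representative index configuration treated in full for \eqref{relw12,2}. Two of your bookkeeping claims, however, are off, and one of them would derail the "main obstacle" step as you have planned it.

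First, and most importantly, it is not true that the $c$-linear terms on the right-hand side of \eqref{relw12,2} come exclusively from $v^{(1)}_{2,1}$ in the first sum. The tabulated formula for $v^{(2)}_{2,1}$ itself carries a defect contribution $\delta_{h,i_5}\,c_{i_1}$ (second line of its case list), which is switched on precisely in the substantive case $h=i_5$. In the paper's explicit computation with $h=i_5=1$, the first sum ($v^{(2)}_1\cdot v^{(1)}_{2,1}$) produces only three of the four $c$-terms of $v^{(1)}_3$, namely $-b^1_{123}c_5+b^1_{125}c_3-b^1_{135}c_2$; the fourth term $c_1b^1_{235}$ arises from the second sum via that $\delta_{h,i_5}c_{i_1}$ contribution of $v^{(2)}_{2,1}$ paired with $v^{(1)}_1$. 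If you match the $c$-terms "directly" against the first sum alone you will find yourself one term short, so the matching must be done across both sums. Second, two smaller imprecisions: in the $(2,6,5,1)$ case there is only one $\gamma$ (so no $B^{12}$'s; the cancellations are of the form $b_{i_1i_3h}b_{i_1i_2i_4}-b_{i_1i_4h}b_{i_1i_2i_3}=0$ and $P_{i_2,\hat{i_6}}-P_{i_2,\hat{i_6}}=0$), and in the $(2,5,5,2)$ case the condition $h\notin\{i_1,\ldots,i_5\}$ is vacuous since $\{i_1,\ldots,i_5\}=\{1,\ldots,5\}$; the degenerate subcase you actually need to handle besides $h\geq 4$ is $h\leq 3$ with $h=i_k$ for some $k\leq 4$. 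With these corrections your plan reproduces the paper's argument.
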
 
 \begin{proof}
For equation 
%\eqref{relw12,1}
(4.11), suppose $j=1$ (the case $j=2$ is analogous).
Recall that $y_{i_6h} \neq 0$ only if $i_6=h \leq 4$ and $x_{ji_1} \neq 0$ only if $i_1=5$. First assume $h=5$. The left-hand side is clearly zero and, if $i_6 = 5,6$ or $i_1 \neq 5$, also the right-hand one is clearly zero, since $w^{(2)}_1(e_{i_1} \wedge e_{i_k} \otimes f_5) \neq 0$ only if $i_1, i_k = 5,6$ and $w^{(1)}_{1}(\varepsilon_{i_1, \ldots, i_4}) \neq 0$ only if $5 \in \lbrace i_1, \ldots, i_4 \rbrace$. 
If $i_1= 5$ and $ 6 \in \lbrace i_2, \ldots, i_5 \rbrace$, without loss of generality, say that $i_6 = 1$ to get the right-hand side equal to $\pm (b_{234} - b_{234}) = 0$. 
Suppose then $h \leq 4$. If $h= i_1 \leq 4$, an easy check shows  that both terms are equal to zero. Hence suppose $h= i_2 \leq 4$ (the cases $h= i_3, i_4, i_5$ are analogous). The left-hand side is clearly zero and we have now three subcases: $i_6 = 5$, $5 \in \lbrace i_3, i_4, i_5 \rbrace$ or $i_1=5$. In the first case the right-hand side is easily seen to be zero. In the second case, assuming $5= i_5$, the right-hand side reduces to $b_{i_1i_3h}b_{i_1i_2i_4} - b_{i_1i_4h} b_{i_1i_2i_3} = 0$. If $i_1=5$, using the formula for $w^{(2)}_{2,2}(\varepsilon_{i_1, \ldots, i_5}  \otimes f_{i_2})$, we find that the right-hand side is equal to $\pm (P_{i_2, \hat{i_6}}- P_{i_2, \hat{i_6}})=0$.
The last case to consider is $h=i_6 \leq 4$. We have the two subcases $5 \in \lbrace i_2, i_3, i_4, i_5 \rbrace$ or $i_1=5$.
Assuming $i_5=5$, using the formula for $v^{(1)}_{2,2}(\varepsilon_{i_1, \ldots, i_6} \otimes e_{i_1})$, we obtain that both terms are equal to $P_{i_1, \hat{i_5}}$. If instead $i_1=5$, both terms are equal to $\langle v^{(1)}_{2,2}(\varepsilon_{1, \ldots, 6} \otimes e_{5}), u_1^{*} \otimes \gamma_1 \otimes \gamma_1 \rangle = \frac{1}{2}P(5) = \frac{1}{2}P +b_{126}b_{345} - b_{136}b_{245} + b_{146}b_{235} - b_{156}b_{234}.  $ 

For equation 
%\eqref{relw12,2}
(4.12), again suppose $j=1$ and observe that the left-hand side is zero if either $h \geq 4$ or if $h \neq i_5$. If $h \geq 4$, and $ \lbrace 4,5 \rbrace \nsubseteq \lbrace i_1, \ldots, i_4 \rbrace $, also the right-hand side is clearly zero. If $h \geq 4$, $i_3=4,$ $i_4 = 5$, the right-hand side reduces to $\pm [B^{12}_{i_1i_2i_4,i_1i_2i_5}-B^{12}_{i_1i_2i_4,i_1i_2i_5}]=0$. Assume then $h \leq 3$. We write explicitly the computations in the case $h=i_5 =1$, $i_1=2$, $i_2=3$, $i_3=4$, $i_4=5$. The other cases can be obtained with the same method. 
Looking at the formula for $v^{(1)}_{3}$ with the choice $t=1$, $s=4$, $p=5$, we can rearrange the terms to express the left-hand side of our equation as
$$ c_1b^{2}_{235}-c_2b^{1}_{135} + c_3b^{1}_{125} -c_5b^{1}_{123} + b^{1}_{123}[b^{2}_{125}b^{1}_{345}-b^{2}_{135}b^{1}
_{245}+b^{2}_{235}b^{1}_{145}]+   $$
$$ -b^{1}_{125}[b^{2}_{123}b^{1}_{345}-b^{2}_{153}b^{1}_{243}+b^{2}_{253}b^{1}_{143}] + b^{1}_{135}[b^{2}_{125}b^{1}_{342}-b^{2}_{123}b^{1}_{245}+b^{2}_{235}b^{1}_{142}] +$$
$$ -b^{1}_{235}[b^{2}_{123}b^{1}_{145}-b^{2}_{135}b^{1}_{124}+b^{2}_{125}b^{1}_{134}]. $$
We now compute the right-hand side. Set $\mathcal{B}_5:=B^{12}_{125, 345}-B^{12}_{135,245}+B^{12}_{145,235}$ and define $\mathcal{B}(k)$ for $k=1,2,3$ by permutation.
From the terms of the form $v^{(2)}_{1} v^{(1)}_{2,1}$ we obtain 
$$ -b^{1}_{123} (c_5 + \frac{1}{2}\mathcal{B}_5)+ b^{1}_{125} (c_3 + \frac{1}{2}\mathcal{B}_3) -b^{1}_{135} (c_2 + \frac{1}{2}\mathcal{B}_2)+ b^{1}_{124}B^{12}_{235,135} -b^{1}_{134}B^{12}_{235,125} + b^{1}_{145}B^{12}_{235,123}.  $$
From the terms of the form $v^{(2)}_{2,1} v^{(1)}_{1}$ we obtain 
$$ -b^{1}_{123} (\frac{1}{2}\mathcal{B}_5 - B^{12}_{145,235}) + b^{1}_{125} (\frac{1}{2}\mathcal{B}_3 - B^{12}_{134,235}) -b^{1}_{135} (\frac{1}{2}\mathcal{B}_2 - B^{12}_{124,235})+ b^{1}_{235} (c_1 + \frac{1}{2}\mathcal{B}_1).  $$
Summing the two terms we get $-b^{1}_{123} (c_5 + \mathcal{B}_5) + b^{1}_{125} (c_3+ \mathcal{B}_3) -b^{1}_{135} (c_2 +\mathcal{B}_2)+ b^{1}_{235} (c_1 + \mathcal{B}_1).  $ The cancellation of half of the terms leads to the same expression as for $v^{(1)}_{3}$.
\end{proof}

\section*{Acknowledgements}
The authors are supported by the grants MAESTRO NCN -
UMO-2019/34/A/ST1/00263 - \\ Research in Commutative Algebra and
Representation Theory and NAWA POWROTY - PPN/PPO/2018/1/00013/U/00001 - Applications of Lie algebras to Commutative Algebra.
They also acknowledge support from INDAM-GNSAGA.
The authors would like to thank Jerzy Weyman and Xianglong Ni for helpful conversations about the content of this paper.

\bibliographystyle{alpha}

\end{document}